\documentclass{article}

\usepackage[
backend=biber,
sorting=nyt
]{biblatex}
\renewbibmacro{in:}{}

\usepackage{graphicx} 
\usepackage{mathrsfs}
\usepackage[a4paper, total={6in, 9in}]{geometry}
\usepackage{mathtools}
\usepackage{enumitem}
\usepackage{amsmath}
\usepackage{amsthm}
\usepackage{amssymb}
\usepackage{graphicx}
\usepackage[dvipsnames]{xcolor}

\newtheorem{theorem}{Theorem}[subsection]
\newtheorem{corollary}[theorem]{Corollary}
\newtheorem{lemma}[theorem]{Lemma}
\newtheorem{proposition}[theorem]{Proposition}

\newtheorem{remark}[theorem]{Remark}

\usepackage[dvipsnames]{xcolor}
\usepackage[colorlinks=true]{hyperref}
\hypersetup{urlcolor=RubineRed,linkcolor=RoyalBlue,citecolor=ForestGreen}

\usepackage{sectsty}
\sectionfont{\centering}
\numberwithin{equation}{section}

\title{\large THE WEIGHT TWO AND OPPOSITE SIGN CASES FOR THE FOURIER RELATIVE TRACE FORMULAS \normalsize}
\author{Matteo Di Scipio}
\date{April 2026}

\begin{document}

\maketitle
\begin{abstract} 
\noindent We provide an adelic relative trace formula proof to the Petersson/Bruggeman-Kuznetsov (PBK) formulas, specifically in the holomorphic case for $\kappa=2$ and the non-holomorphic case for $m_1m_2<0$. Given two sets of hypothesis on the non archimedean test function $f$, called the geometric and spectral assumptions, this approach allows us to obtain refined PBK formulas.
\end{abstract}
\tableofcontents
\pagebreak 

\section{Introduction}
\subsection{Motivation}

The Petersson and the Bruggeman-Kuznetsov formulas are some of the most important tools in analytic number theory. Applications abound in literature, see for instance \cite{Conrey-Iwaniec2000}, \cite{Blomer-Harcos2008}, \cite{PetrowYoung2020}, \cite{Hu-Petrow-Young-moment} for moments and subconvexity, \cite{Deshouillers} for large sieve inequalities, \cite{Duke1995} and \cite{Iwaniec-Sarnak2000} for non-vanishing of L-functions at central values, \cite{Bombieri1986} and \cite{maynard2021} for primes in arithmetic progression, \cite{Iwaniec2000} for low lying zeroes, and \cite{Michel2006Kloostermania} for a survey on Kloostermania and its applications. \bigbreak 
There are two main ways of obtaining these formulas: a more classical approach involving Poincaré series (see e.g \cite{Petersson2} and \cite{Kuznetsov}) and the pre-trace formula approach developed by Jacquet \cite{Jacquet1986} and Zagier \cite{Zagier1981}, as exposited by Knigthly and Li in \cite{KL-petersson} and \cite{KL-kuznetsov} for the  Petersson formula of weight $\kappa>2$ and the Bruggeman-Kuznetsov formula for pairs of coefficient with the same sign. 
One of the advantages of working with the pre-trace formula is the greater flexibility in selecting families of local non-archimedean representations underlying automorphic forms, obtaining generalised formulas as in \cite{generalised}. The aim of this paper is to fill a gap in the literature, developing similar formulas for Maass forms in the opposite sign case, as well as in the $\kappa = 2$ holomorphic case via the pre-trace approach. It is worth highligthing that the holomorphic case is particularly interesting for applications given the connections with elliptic curves over $\mathbb{Q}$. \bigbreak

We first prove a version of the opposite sign formula under minimal assumptions; the geometric side will be in terms of generalised Kloosterman sums, as in \cite{generalised}. Following the same paper, we outline how additional constraints of the test function, namely the \emph{spectral} and \emph{geometric assumptions}, allow to obtain refined formulas. We prove this for a large class of archimedean test functions $h_\infty$, namely those that satisfy
\begin{equation}\label{def:testfunction-conditions}
    \begin{cases}
        h_\infty(t) \text{ is even} \\
        h_\infty(t) \ \text{is holomorphic in } |\text{Im}(t)|\leq A \\
        h_\infty(t)\ll (1+|t|)^{-B} \ \text{in } |\text{Im}(t)|\leq A
    \end{cases}
\end{equation}
for any $A>13$ and $B>2$ as in \cite{KL-kuznetsov} (see Theorem 8.1 and its proof loc. cit.). The derivation of this formula runs very similar to the same sign case, with the only major change arising on the archimedean component of the off-diagonal terms on the geometric side, for which we introduce a counterpart to the Zagier transform given in \cite[\S 7.5.2]{KL-kuznetsov}.  
As a simple corollary, we prove an equidistribution result for the parities of automorphic representations that are unramified at infinity with specified non-archimedean components.

We also prove the refined weight two Petersson formula by a limiting argument, taking a smooth truncation of the matrix coefficient associated with the newform of the discrete series $\mathcal{D}(2)$. We approach the problem in this way because, unlike in the case $\kappa \geq 4$ described in \cite{KL-petersson}, the matrix coefficient is not in $L^1([G])$, and the geometric kernel attached to it need not be absolutely and uniformly convergent on compacta. It is worth to also point out this method differs from the matrix coefficient approach used for Selberg trace formulas (see for instance \cite[Corollary 7.5.4]{palm2012explicitgl2traceformulas}).

We mention in passing that we chose to work over $\mathbb{Q}$ for sake of presentation, but the results of this paper should generalise to arbitrary number fields. The estimates we present throughout in order to prove the main theorems are relatively weak, and are not the best ones known to date; this is done in the hope that similar techniques may find application in similar problems.

\subsection{Statement of main results}
We refer the reader to Section 1.4.2 for notation concerning groups, which we tried to keep as standard as possible.
For a choice of a given test function $f_\mathbb{A}=f_\infty \otimes f\in C(G(\mathbb{A}))$,
the trace formulas we shall discuss provide an equality between two series. One side, commonly referred to as the spectral side, involves a product of pairs of Fourier coefficients, averaged over a family of automorphic forms cut by $f$, whereas the other involved generalised Kloosterman sums. Choosing $f_\infty \in C_c^\infty(G^+(\mathbb{R}))$ which is also bi-$K_\infty$-invariant, and $f\in C_c^\infty(G(\mathbb{A}_{\text{fin}}))$ with mild support conditions, one may obtain an unrefined version of the  Bruggeman-Kuznetsov formula. An interesting feature of such a formula is that one may suppress $f_\infty$ entirely, relating instead an entire function $h_\infty$ on the spectral side to its integral transform $H_\infty$ on the geometric side (see Proposition \ref{prop:paley-wiener-isom} for the relation between $f_\infty$ and $h_\infty$).

\begin{theorem}[Unrefined BK formula]\label{thm:unrefinedBKformula} Suppose $f=\bigotimes_p f_p$ is a non-zero, locally constant and compactly supported function on $G(\mathbb{A}_{\operatorname{fin}})$, and that for each $p$ the local test function $f_p$ is supported inside the subgroup of matrices $g \in G(\mathbb{Q}_p)$ with $v_p(\det g)\in 2 \mathbb{Z}$.\\
For $m_1,m_2 \in \tfrac{1}{N}\mathbb{Z}$ with $m_1m_2<0$ we have that 
\begin{align}
    \sum_{\pi \in \mathcal{F}_0(f)} & h_\infty(t_\pi)\sum_{\varphi \in \mathcal{B}(\pi)}a_{u_{\pi(f)\varphi}}(m_1)\overline{a_{u_\varphi}(m_2)} \notag \\
    &+\frac{1}{4\pi}\sum_{\chi \in \mathcal{F}_E(f)}\sum_{\phi\in \mathcal{B}(\pi_{\chi,\chi^{-1}})}\int_{-\infty}^{\infty}h_\infty(t)a_{u_{E(\pi_{it}(\chi)(f)\phi_{it})}}(m_1)\overline{a_{u_{E(\phi_{it})}}(m_2)}dt \\
    & = \sum_{c \in \mathcal{C}(f)}\frac{H(m_1,m_2,c)}{c}H_\infty^-\left(\frac{4\pi\sqrt{|m_1m_2|}}{c} \right) \notag
\end{align}
as absolute convergent sums/integrals. Here
    \begin{itemize}[itemsep=-0.2em]
        \item $N$ is the least positive integer such that $f$ is bi-$K(N)$-invariant,
        \item $\mathcal{F}_0(f)$ (resp. $\mathcal{F}_E(f)$) is the global family of automorphic cuspidal representations unramified at $\infty$ (resp. Hecke characters) cut by $f$ as in Section 1.4.5,
        \item For $\pi \in \mathcal{F}_0(f)$, the parameter $t_\pi$ is the spectral parameter at $\infty$, defined in \eqref{def:spectral parameter}, and $\mathcal{B}(\pi)$ is an orthonormal basis of pure tensors for $\pi^{K_\infty\times K(N)}$,
        \item For $\chi \in \mathcal{F}_E(f)$, the terms $\pi_{\chi,\chi^{-1}}$ and $E(\phi_{it})$ ($ t \in \mathbb{R},\phi\in \pi_{\chi,\chi^{-1}}$ smooth) are the unitarily induced representation / Eisenstein series described in Section 2.2.1, and $\mathcal{B}(\pi_{\chi,\chi^{-1}})$ is as in the cuspidal case,
        \item $H(m,n,c)\in \mathbb{C}$ and $\mathcal{C}(f)\subseteq\mathbb{Q}_+$ are the generalised Kloosterman sums and moduli space associated to $f$ given in Section \ref{section:kloosterman-weights},
        \item $u_\varphi$ (resp $u_{E(\phi_{it})}$) are the classical $\Gamma(N)$-Maass forms (resp. Eisenstein series)
        with Fourier coefficients $a_{u_\varphi}(m)$ (resp $a_{u_{E(\phi_{it})}}(m)$ ), as defined in \eqref{def:maasscoefficients},
        \item $h_\infty \in PW^{\operatorname{even}}(\mathbb{C})$, the even Paley-Wiener space as in \cite[\S 3.3]{KL-kuznetsov},  whereas $H_\infty^{-}$ is the transform of $h_\infty$ given by
        \begin{equation}\label{def:negative-H-transform}
            H_\infty^-(x)=\frac{1}{\pi}\int_{-\infty}^{\infty}K_{2it}(x)\sinh(\pi t)t h_\infty(t)dt 
        \end{equation}        
    \end{itemize}
\end{theorem}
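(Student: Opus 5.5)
We will follow the pre-trace formula strategy of Knightly--Li \cite{KL-kuznetsov}, changing only the archimedean local computation in the off-diagonal terms. Take $f_\mathbb{A}=f_\infty\otimes f$ with $f_\infty\in C_c^\infty(G^+(\mathbb{R}))$ bi-$K_\infty$-invariant, and relate it to $h_\infty$ via the Selberg--Harish-Chandra transform of Proposition \ref{prop:paley-wiener-isom}. Write the kernel $K(x,y)=\sum_{\gamma\in \overline{G}(\mathbb{Q})}f_\mathbb{A}(x^{-1}\gamma y)$ on $Z(\mathbb{A})\backslash G(\mathbb{A})$. The hypothesis that $v_p(\det g)$ is even on $\operatorname{supp} f_p$ lets us absorb the centre and work with $\overline{G}$. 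We then integrate $K$ against the additive characters $\theta_{m_1}(n_1)\overline{\theta_{m_2}(n_2)}$ over $(N(\mathbb{Q})\backslash N(\mathbb{A}))^2$, with $x=n_1$ and $y=n_2$ translated by suitable archimedean diagonal elements $\operatorname{diag}(y_1,1)$, $\operatorname{diag}(y_2,1)$ so that the classical normalisation appears.

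\textbf{Spectral side.} We expand $K$ spectrally as $K_{\mathrm{cusp}}+K_{\mathrm{res}}+K_{\mathrm{Eis}}$. Since $m_1m_2\neq0$, the residual (one-dimensional) part has vanishing nonconstant Fourier coefficients. For $\pi\in\mathcal F_0(f)$ we have $\pi(f_\infty)$ acting on $\pi^{K_\infty}$ by $h_\infty(t_\pi)$. Unfolding the Whittaker/Fourier coefficients of $\pi(f)\varphi$ and $\varphi$ and passing to the classical $\Gamma(N)$-forms $u_\varphi$ via \eqref{def:maasscoefficients} then yields the cuspidal sum times $W$-factors $\sqrt{y_1y_2}K_{it}(2\pi|m_1|y_1)K_{it}(2\pi|m_2|y_2)$. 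The Eisenstein part is treated in the same way using Section 2.2.1, and gives the $\frac{1}{4\pi}\int$ term. For absolute convergence we use rapid decay of $h_\infty$ together with the bounds on Fourier coefficients and on the Eisenstein continuous spectrum from \cite{KL-kuznetsov}.

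\textbf{Geometric side.} Using the Bruhat decomposition, $\overline G(\mathbb{Q})=\overline B(\mathbb{Q})\sqcup \overline N(\mathbb{Q})\,\overline{A}(\mathbb{Q})w\,\overline N(\mathbb{Q})$. The small-cell terms $\gamma\in\overline B(\mathbb{Q})$ contribute only through diagonal $\gamma$. For these we need $m_1\delta=m_2$ up to positive scaling, and the archimedean support of $f_\infty$ in $G^+(\mathbb{R})$ (positive determinant) forces the sign of the ratio to be positive. Hence when $m_1m_2<0$ the diagonal contribution vanishes identically; this is the key simplification relative to the same-sign case. The big cell unfolds into a sum over $\delta=\operatorname{diag}(-c^{-1}\ldots)$ of products of local orbital integrals. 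At the finite places this gives $H(m_1,m_2,c)$ with $c\in\mathcal C(f)$, as in Section \ref{section:kloosterman-weights}. At $\infty$ we get
\[
I_\infty=\int_{\mathbb R}\int_{\mathbb R} f_\infty\!\left(\begin{pmatrix}1&-x_1\\0&1\end{pmatrix}\begin{pmatrix}0&-c^{-1}\\ c&0\end{pmatrix}\ldots\begin{pmatrix}1&x_2\\0&1\end{pmatrix}\right)e(m_1x_1-m_2x_2)\,dx_1dx_2 .
\]
Here the relevant element has negative determinant relative to the same-sign case, after the needed $\operatorname{diag}(-1,1)$ twist coming from the sign of $m_1m_2$.

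\textbf{Main obstacle.} The main work is evaluating $I_\infty$ in terms of $h_\infty$. Following \cite[\S7.5.2]{KL-kuznetsov}, we write $f_\infty$ through its Harish-Chandra transform and reduce $I_\infty$ to an integral of the form $\int_{\mathbb R}h_\infty(t)\,\mathcal J_t(\cdot)\,t\tanh(\pi t)\,dt$, where $\mathcal J_t$ is an explicit integral of spherical functions against the characters. In the opposite-sign case, Fourier analysis of the spherical function $\varphi_{it}$ leads to the Mellin--Barnes/Fourier representation of $K_{2it}$ rather than of $J_{2it}$. Concretely, we will identify $\int\!\int$ of the spherical function with the classical formula expressing $K_{2it}(x)\cosh(\pi t)$ as an oscillatory integral, and combine it with the Plancherel density $t\tanh(\pi t)$ to obtain exactly $\frac{1}{\pi}\sinh(\pi t)\,t\,K_{2it}$, i.e. $H_\infty^-$ of \eqref{def:negative-H-transform}, evaluated at $4\pi\sqrt{|m_1m_2|}/c$. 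The $y_1,y_2$ factors should cancel against those on the spectral side.

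Justifying the interchanges is delicate, since the inner integrals converge only conditionally. We will handle this by first treating $f_\infty$ with compact support, where the integrals converge absolutely, and then using Paley--Wiener decay in $t$ with uniform bounds $K_{2it}(x)\ll e^{-\pi|t|}\min(x^{-1/2},\ldots)$. Absolute convergence of the Kloosterman side then follows from $H(m_1,m_2,c)\ll_f c^{1/2+\epsilon}$ and $H^-_\infty(x)\ll x^{1-\epsilon}$ as $x\to0$, which we obtain by shifting the contour to $\operatorname{Im}t=-\tfrac12+\epsilon$, which is permitted since $h_\infty$ is entire.
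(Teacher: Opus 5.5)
\textbf{There is a genuine gap in how you dispose of $y_1,y_2$.}

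At fixed $y_1,y_2$ your cuspidal term is $\frac{4}{\pi}\sum_{\pi}h_\infty(t_\pi)\cosh(\pi t_\pi)K_{it_\pi}(2\pi|m_1|y_1)K_{it_\pi}(2\pi|m_2|y_2)\sum_{\varphi}a_{u_{\pi(f)\varphi}}(m_1)\overline{a_{u_\varphi}(m_2)}$. The $y$-dependence therefore sits \emph{inside} the spectral sum, through $t_\pi$-dependent Bessel factors; only the $\sqrt{y_1y_2}$ can cancel.

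The same happens on the geometric side. If you expand $f_\infty$ by spherical inversion, each spherical function contributes (formally, by uniqueness of the $K_\infty$-fixed Whittaker function) a constant times $K_{it}(2\pi|m_1|y_1)K_{it}(2\pi|m_2|y_2)$, and these factors stay inside the $t$-integral. Equivalently, after rescaling $x_1,x_2$ by $\sqrt{\mu|m_2/m_1|}$ and $\sqrt{\mu|m_1/m_2|}$, the archimedean orbital integral depends on $|m_j|y_j/\sqrt{\mu|m_1m_2|}$ and not only on $4\pi\sqrt{|m_1m_2|}/c$. So fixing $y_1,y_2$ does not produce the stated identity, and your claim that the $y$-factors ``cancel'' is false.

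The missing step is the Knightly--Li averaging: impose $w=|m_1|y_1=|m_2|y_2$ and integrate over $w\in(0,\infty)$.
\begin{itemize}
\item On the spectral side, $\int_0^\infty K_{it}(2\pi w)^2\,dw=\pi/(8\cosh\pi t)$ cancels the $\cosh(\pi t)$ from the Whittaker normalisation and leaves the uniform factor $\tfrac12$, for both the cuspidal and the Eisenstein parts.
\item On the geometric side, the $w$-integral combines with the $x_1,x_2$-integrals into a multiple of $\widehat{\mathcal M}(\sqrt{\mu|m_1m_2|})$. Here $\mathcal M(t)=\iint_{\mathbb H}k(z+t,1/\overline z)\,y\,dz$ is the modified Zagier transform, whose $y$-variable is exactly $w$ rescaled.
\item You must also justify exchanging $\int dw$ with $\sum_c$. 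The paper does this by showing the inner integral of $|k|$ vanishes for $w$ outside a compact set, uniformly in $c$.
\end{itemize}

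Once this step is inserted, your ``main obstacle'' plan is close in spirit to the paper, which however avoids spherical functions and the Plancherel density. Lemma \ref{lemma:D-to-kernel} and the lemma after it exhibit $\mathcal M$ as an Abel transform of $k$, so $\mathcal M(2\sinh(\alpha/2))=\frac14\int h_\infty(r)e^{i\alpha r}\,dr$. Taking the Fourier transform then leads to the oscillatory integral you have in mind, $\int_0^\infty\cos(x\sinh\beta)\cos(2r\beta)\,d\beta=\cosh(\pi r)K_{2ir}(x)$, with an integration by parts of the Jacobian $\cosh\beta$ producing the factor $r$.

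The interchange of the $r$- and $t$-integrals there is only conditionally convergent. Your remedy of ``first treating compactly supported $f_\infty$'' does not help, since $f_\infty$ is already compactly supported and the difficulty is on the $h_\infty$ side. The paper inserts $e^{-\epsilon\sinh\alpha}$, shifts the contour to $\operatorname{Im}\alpha=\pm\pi/2$, and lets $\epsilon\to0$ (Proposition \ref{prop:DtoHtransform}).

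Two minor corrections:
\begin{itemize}
\item The even-valuation hypothesis is not about the centre, since you are already on $\mathrm{PGL}_2$. Its role is to force $\mu\in\mathbb Z_p^\times(\mathbb Q_p^\times)^2$ at every $p$, which together with $\mu>0$ gives $\mu=c^{-2}$.
\item The second-cell representative still has positive determinant. The opposite sign enters only through the character, i.e.\ through the reflection $z\mapsto-\overline z$, which is why $k(z+t,1/\overline z)$ replaces $k(z+t,-1/z)$.
\end{itemize}
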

Theorem \ref{thm:unrefinedBKformula} is often too general for applications, and we often desire better control on both the geometric and spectral sides. With this purpose in mind, we introduce two sets of conditions, called the geometric and spectral assumptions. \bigbreak

\noindent \textbf{Geometric assumption}:
 There exists $y \in \mathbb{Q}_+$ such that $\text{supp}(f) \subset a(y)^{-1}K_{\operatorname{fin }}a(y)$. \bigbreak 

For any $y \in \mathbb{Q}_+$ such that the Geometric assumption holds, we say that $y$ \emph{controls the support} of $f$. \bigbreak

For sake of notation, denote by $\mathcal{H}$ the space of functions on $G(\mathbb{A}_{\operatorname{fin}})$ that are locally constant and compactly supported.\\
\noindent \textbf{Spectral assumption}: The function $f \in \mathcal{H}$ is non-zero and admits a representation $f=\bigotimes_p f_p$ where $f_p$ is either a newform projector, or there exists $c \in \mathbb{Z}_{\geq 0}$ such that $f_p=\nu(p^c)1_{K_0(p^c)}$, where $\nu(n)=n\prod_{p|n}(1+p^{-1})$. \bigbreak

\noindent Given a cuspidal representation $\pi$ of $G(\mathbb{A})$ that factors as $\pi\simeq\bigotimes'_v\pi_v$, if $f$ satisfies the spectral assumptions, the operator $\pi(f)$ is an orthogonal projection onto the subspace
\begin{equation}\label{eq:V_f}
    V_{f} = \pi_\infty\otimes \bigotimes_{p: f_p \ \text{newform proj}}\mathbb{C}v_{0,p} \otimes \bigotimes_{p: f_p=v(p^c)1_{ZK_0(p^c)}}\pi_p^{K_0(p^c)}
\end{equation}
where $v_{0,p}$ is the unique newform of $\pi_p$, up to scalars. 

Following \cite[\S 7]{Petrow_2018}, if $\pi \in \mathcal{F}_0(f)$, there exists an orthonormal basis $\mathcal{B}_f(\pi)$ of the subspace $V_f^{K_\infty}$, and weights $w(\pi,f) \in \mathbb{C}$ such that for any $m_1,m_2 \in \mathbb{N}$ with $(m_1m_2,N)=1$ (here $N$ is the level of $f$)
\begin{equation}\label{def:discrete-weights}
    \sum_{\varphi \in \mathcal{B}_f(\pi)}a_{u_\varphi}(m_1)\overline{a_{u_{\varphi}}(m_2)}= w(\pi,f)\lambda_\pi(m_1)\overline{\lambda_\pi(m_2)}
\end{equation}
thus recovering the Hecke eigenvalues, normalised so that the Ramanujan-Petersson conjecture predicts $|\lambda_\pi(q)|\leq 2$ for $q$ prime coprime to $N$. The same results holds for $\pi \in \mathcal{F}_\kappa(f)$ and the right-isotypic space $V_f^{\kappa\times K(N)}$ consisting of functions $\varphi$ that are right-$K(N)$-invariant and such that $\varphi(gk_\theta)=\varphi(g)e^{i\kappa\theta}$. In an analogous fashion, for a Hecke character $\chi$, letting
\begin{equation}\label{def:cts-Hecke}
    \lambda_{\pi_{it}(\chi)}(m)=\sum_{ab=m}\chi(a)\chi^{-1}(b)(b/a)^{it}
\end{equation}
we have
\begin{equation}\label{def:continuous-weights}
    \sum_{\phi \in \mathcal{B}_f(\pi_{\chi,\chi^{-1}})}a_{u_{E(\phi_{it})}}(m_1)\overline{a_{u_{E(\phi_{it})}}(m_2)}=w(\pi_{it}(\chi),f)\lambda_{\pi_{it}(\chi)}(m_1)\overline{\lambda_{\pi_{it}(\chi)}(m_2)}
\end{equation}
where $\mathcal{B}_f(\pi_{\chi,\chi^{-1}})$ is a basis for $V_f^{K_\infty}$. With these further assumptions at hand, one may derive from Theorem \ref{thm:unrefinedBKformula} the following formula
\begin{theorem}[Refined BK formula]\label{thm:refined-opposite-signs}
    Let $f $ satisfy both the spectral and geometric assumptions. For $m_1,m_2 \in \mathbb{Z}$ with $m_1m_2<0$ and $(m_1m_2,N)=1$ we have that
    \begin{align}
     \sum_{\pi \in \mathcal{F}_0(f)}p(\pi)h_\infty(t_\pi)w(\pi,f)\lambda_\pi(|m_1|)\overline{\lambda_\pi(|m_2|)} \notag \\
     +\frac{1}{4\pi}\sum_{\chi \in \mathcal{F}_E(f)}p(\pi_{\chi,\chi^{-1}})\int_{-\infty}^{\infty}h_\infty(t)& w(\pi_{it}(\chi),f)\lambda_{\pi_{it}(\chi)}(|m_1|)\overline{\lambda_{\pi_{it}(\chi)}(|m_2|)}dt \\
    & = \sum_{c\in \mathcal{C}(f)}\frac{H(m_1,m_2,c)}{c}H_\infty^{-}\left(\frac{4\pi\sqrt{|m_1m_2|}}{c}\right) \notag
    \end{align}
    as absolutely convergent sums/integrals. Here
    \begin{itemize}[itemsep=-0.2em]
        \item $p(\pi)$ is the parity of $\pi$, as given in \eqref{def:parity},
        \item $h_\infty$ is now a test function that satisfies \eqref{def:testfunction-conditions} and $H_\infty^-$ is given in \eqref{def:negative-H-transform},
        \item $w(\pi,f)$ (resp $w(\pi_{it}(\chi),f)$) is the discrete (resp. continuous) weight in \eqref{def:discrete-weights} (resp. \eqref{def:continuous-weights}) and $\lambda_\pi(m)$ (resp. $\lambda_{\pi_{it}(\chi)}(m)$) is the Hecke eigenvalue of $\pi$ (resp. $\pi_{it}(\chi)$),
        \item All other notation is as in Theorem \ref{thm:unrefinedBKformula}.
    \end{itemize}
\end{theorem}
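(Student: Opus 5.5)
We derive Theorem \ref{thm:refined-opposite-signs} from Theorem \ref{thm:unrefinedBKformula} in three steps. First we rewrite the spectral side using the spectral assumption. Next we pass from coefficients at negative integers to Hecke eigenvalues at $|m_i|$, which is where the parity $p(\pi)$ enters. Finally we extend the class of archimedean test functions from $PW^{\operatorname{even}}(\mathbb{C})$ to functions satisfying \eqref{def:testfunction-conditions}. The geometric assumption is used to check that the hypotheses of Theorem \ref{thm:unrefinedBKformula} hold. Since $\operatorname{supp}(f)\subset a(y)^{-1}K_{\operatorname{fin}}a(y)$, every $g$ in the support has $\det g\in \widehat{\mathbb{Z}}^\times$, so $v_p(\det g)=0\in 2\mathbb{Z}$. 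The assumption also makes the set $\mathcal{C}(f)$ and the sums $H(m_1,m_2,c)$ the ones appearing in the statement, with $m_1,m_2\in\mathbb{Z}$.

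\textbf{Step 1 (spectral side).} By the spectral assumption, $\pi(f)$ is the orthogonal projection onto $V_f$ in \eqref{eq:V_f}. We choose the basis $\mathcal{B}(\pi)$ of $\pi^{K_\infty\times K(N)}$ to extend an orthonormal basis $\mathcal{B}_f(\pi)$ of $V_f^{K_\infty}$. Then $\pi(f)\varphi=\varphi$ for $\varphi\in\mathcal{B}_f(\pi)$ and $\pi(f)\varphi=0$ otherwise. The inner sum collapses to $\sum_{\varphi\in\mathcal{B}_f(\pi)}a_{u_\varphi}(m_1)\overline{a_{u_\varphi}(m_2)}$, and the same holds for the Eisenstein contribution.

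\textbf{Step 2 (parity).} Let $\varepsilon=\mathrm{diag}(-1,1)\in G(\mathbb{R})$, which normalises $K_\infty$. For $\pi$ unramified at infinity, $\pi_\infty(\varepsilon)$ acts on the spherical line by the sign $p(\pi)$ of \eqref{def:parity}. Embedding $\varepsilon$ diagonally in $G(\mathbb{Q})$ and using left $G(\mathbb{Q})$-invariance, we get $\varphi(\varepsilon_\infty g)=\varphi(\varepsilon_{\operatorname{fin}}^{-1}g)$. Its finite part $\varepsilon_{\operatorname{fin}}$ lies in $K_0(p^c)$ at every $p$, and it fixes the newform line, because it normalises $K_1(p^c)$ and the newform is unique up to scalars. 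Translating by $\varepsilon$ therefore preserves $V_f^{K_\infty}$ and acts there by $p(\pi)$. On Fourier expansions, conjugation by $\varepsilon$ sends $a_{u_\varphi}(m)$ to $a_{u_\varphi}(-m)$. Hence, for $m<0$, $a_{u_\varphi}(m)=p(\pi)a_{u_\varphi}(|m|)$, possibly after rotating the basis. Say $m_1<0<m_2$. Substituting this and then \eqref{def:discrete-weights} with $(m_1m_2,N)=1$ gives $p(\pi)w(\pi,f)\lambda_\pi(|m_1|)\overline{\lambda_\pi(|m_2|)}$, using that $p(\pi)$ is real. The Eisenstein terms are handled the same way: the parity of $\pi_{\chi,\chi^{-1}}$ is $\chi_\infty(-1)$, independent of $t$, which is why $p(\pi_{\chi,\chi^{-1}})$ sits outside the integral. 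We then apply \eqref{def:continuous-weights}. The geometric side is untouched.

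\textbf{Step 3 (extending $h_\infty$; the main obstacle).} We expect this to be the hardest step. The plan follows \cite[Theorem 8.1]{KL-kuznetsov}. Given $h_\infty$ satisfying \eqref{def:testfunction-conditions}, we approximate it by $h_n=h_\infty * \eta_n$, where $\eta_n$ is a Gaussian-type approximate identity. More precisely, we multiply by $e^{-t^2/n}$ and convolve on the Fourier side with compactly supported bumps, so that $h_n\in PW^{\operatorname{even}}$ and $h_n\to h_\infty$ uniformly on the strip $|\mathrm{Im}\,t|\le A$ with uniform bounds $(1+|t|)^{-B}$. On the spectral side we pass to the limit by dominated convergence. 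This requires Weyl-law bounds on $\sum_{|t_\pi|\le T}|w(\pi,f)\lambda_\pi(|m_1|)\lambda_\pi(|m_2|)|$. We get them by applying the Paley--Wiener version of the formula itself to a positive test function with $m_1=m_2$, together with Cauchy--Schwarz to remove the cross terms; the uniform control of exceptional $t_\pi$ comes from $A>13$ exceeding the Selberg bound. On the geometric side we need bounds for $H^-_\infty(x)$ that are uniform in the family, of the shape $H^-_\infty(x)\ll x^{1+\delta}$ as $x\to0$ and $\ll x^{-1/2}$-type decay for large $x$. For small $x$, we shift the contour in \eqref{def:negative-H-transform} to $\mathrm{Im}\,t=-\sigma$ after writing $K_{2it}=\tfrac{\pi}{2}\frac{I_{-2it}-I_{2it}}{\sin(2\pi i t)}$. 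This produces a power of $x$ and crosses no poles of $h_\infty$ within the strip. For large $x$, the standard integral representation of $K_{2it}$ together with the decay of $h_\infty$ suffices. Combined with the trivial bound $|H(m_1,m_2,c)|\ll_f c$, or Weil-type bounds if needed, the sum over $c\in\mathcal{C}(f)$ converges absolutely and uniformly in $n$, so the geometric side also passes to the limit. The delicate point is matching the $\sinh(\pi t)$ growth against the $K_{2it}$ asymptotics uniformly on the shifted contour, and that is where the thresholds $A>13$, $B>2$ are used.
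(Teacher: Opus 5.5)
\textbf{Steps 1--2 and the comparison of routes.} Your Steps 1 and 2 match what the paper does. It rewrites the spectral side via \eqref{def:discrete-weights} and \eqref{def:continuous-weights}, and takes the parity from the identity asserted in \S4.1.1.

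Step 3 is a genuinely different route. The paper does not approximate $h_\infty$. Instead it extends $f_\infty\mapsto h_\infty$ to a class $S$ of non-compactly supported bi-$K_\infty$-invariant functions via \cite[Proposition 8.19]{KL-kuznetsov}. It then asserts that Lemma~\ref{lemma:D-to-kernel} through Proposition~\ref{prop:DtoHtransform} remain valid for $S$, and reruns the argument, quoting \cite[Propositions 8.24 and 8.25]{KL-kuznetsov} for the spectral side. You only take limits of identities already proved for $PW^{\operatorname{even}}$ data. That spares you re-justifying, for non-compactly supported $f_\infty$, the steps that used compact support: the Fubini exchange via compact support of $\mathcal{M}$, and the truncation in $w$. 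The price is explicit uniform-in-$n$ bounds: a local Weyl law, which you get by positivity from the same-sign formula, and a bound for $H^-_{h_n}$. A minor point: to land in $PW^{\operatorname{even}}$, after the Gaussian you should multiply $\widehat{h}$ by a compactly supported cutoff, not convolve on the Fourier side.

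\textbf{The gap: your bound on $H^-_\infty$ is false in general.} To get a positive power of $x$ you must use oddness to keep only $\frac{i}{2}\int I_{2it}(x)\,t\,h_\infty(t)/\cosh(\pi t)\,dt$, since the $I_{-2it}$ piece grows like $x^{-2\sigma}$ on the shifted line. Shifting to $\operatorname{Im}t=-\sigma$ with $\sigma>1/2$ then crosses the pole of $1/\cosh(\pi t)$ at $t=-i/2$. Its residue is $h_\infty(i/2)I_1(x)/(2\pi)\asymp h_\infty(i/2)\,x$.

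So for general $h_\infty$ in the class you only get $H^-_\infty(x)\ll x$ near $0$. The bound $x^{1+\delta}$ needs $h_\infty(\pm i/2)=0$. That is exactly why \eqref{def:testfunction1} and \eqref{def:testfunction2} carry the factor $t^2+\frac14$, and why Lemma~\ref{lemma:H-minus-bound} gets $x^2$.

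With $H^-_\infty\asymp x$, the trivial bound \eqref{eq:trivial-bound} gives only the harmonic majorant $\sum_c c^{-1}$. Your dominated-convergence step on the geometric side therefore fails as written, and the Weil bound is not optional. The repair is what the paper uses:
\begin{itemize}
\item factor $H(m_1,m_2,c)$ via \eqref{eq:kloosterman-factorisation};
\item apply Weil's bound to $S(\cdot,\cdot;c_0)$;
\item apply the trivial bound to $H(\cdot,\cdot,c_N)$ over the sparse set $c_N\mid N^\infty$.
\end{itemize}
This gives the majorant $\sum_{c_0}c_0^{-3/2+\epsilon}\sum_{c_N}c_N^{-1}<\infty$. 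It is uniform in $n$, because the $h_n$ are uniformly bounded on the strip.

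Some of the analysis you flag is unnecessary. Large-$x$ decay is irrelevant: $c\in\mathbb{N}$ by Proposition~\ref{prop:kloosterman-properties}(4), so $x\le 4\pi\sqrt{|m_1m_2|}$. The contour shift needs only $A>\sigma$. So $A>13$ and $B>2$ are not used where you locate them; in the paper's route they come from \cite[\S 8]{KL-kuznetsov}.

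\textbf{Bookkeeping in Step 2.} With $W$ as in \eqref{def:maasscoefficients}, which carries the factor $(\operatorname{sgn}y)^\epsilon$, the relation $u_\varphi(-\bar z)=p(\pi)u_\varphi(z)$ gives $a_{u_\varphi}(-m,y)=p(\pi)a_{u_\varphi}(m,y)$. For the normalised coefficients it gives $a_{u_\varphi}(-m)=a_{u_\varphi}(m)$. The sign therefore enters through $W(-m_1y_1)=p(\pi)W(m_1y_1)$, already in \eqref{eq:cusp-unrefined-oppositesigns}. The paper makes the same identification you do, and the refined formula is correct, but the parity lives in $W$, not in the coefficients.
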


We also obtain a generalised version of the $\kappa=2$ Petersson formula. Unlike in the opposite sign BK formula, the $\kappa=2$ Petersson formula contains a diagonal term $\delta$ on the geometric side, which can be expressed as the product of local  weights $\delta_p$. Another difference is that the archimedean test function $f_\infty$ is now taken to be a smooth truncation of a matrix coefficient, see \eqref{def:truncated-test-function} for details.

\begin{theorem}[Petersson formula]\label{thm:p2-formula}
    Suppose that $f \in \mathcal{H}$ satisfies the geometric and spectral assumptions, for $m_1,m_2 \in \mathbb{N}$ with  and $(m_1m_2,N)=1$,
    \begin{align}
        \sum_{\pi \in \mathcal{F}_2(f)}w(\pi,f)\lambda_\pi(m_1) & \overline{\lambda_\pi(m_2)} \notag \\
        & =\frac{1}{4\pi}\delta_{m_1=m_2}\delta-\frac{1}{2}\sum_{c \in \mathcal{C}(\mathcal{F})}\frac{H(m_1,m_2,c)}{c}J_1\left(\frac{4\pi\sqrt{m_1m_2}}{c}\right)
    \end{align}
    where 
    \begin{itemize}[itemsep=-0.2em]
        \item $\delta=\prod_p\delta_p$ with the local weights $\delta_p$ given in \eqref{def:local-weights},
        \item $J_1$ is the Bessel function,
        \item $\mathcal{F}_2(f)$ is the family of automorphic cuspidal representations cut by $f$ that are weight 2 at $\infty$,
        \item All other notation is as in Theorems \ref{thm:unrefinedBKformula} and \ref{thm:refined-opposite-signs}.
    \end{itemize}
\end{theorem}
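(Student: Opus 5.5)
We prove Theorem \ref{thm:p2-formula} by a limiting argument applied to a family of truncated test functions. Let $\phi_2(g)$ be the matrix coefficient $\langle \mathcal{D}(2)(g)v_2,v_2\rangle$ of the lowest weight vector of $\mathcal{D}(2)$. On $G^+(\mathbb{R})$, in Cartan coordinates, it behaves like $(\cosh r)^{-2}$. It is therefore in $L^{2+\epsilon}$ but not in $L^1$, so we cannot insert it directly into the pre-trace formula as in \cite{KL-petersson}. Instead we take $f_\infty^{(R)}=\eta_R\,\overline{\phi_2}$, where $\eta_R$ is a smooth bi-$K_\infty$-invariant cutoff equal to $1$ on the ball of radius $R$ and vanishing outside radius $R+1$; this is the truncation \eqref{def:truncated-test-function}. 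We set $f_\mathbb{A}^{(R)}=f_\infty^{(R)}\otimes f$.

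Since $f_\infty^{(R)}\in C_c^\infty$, the kernel $K^{(R)}(x,y)$ is a finite sum on compacta. We integrate it against $\overline{\psi(m_1 x)}\psi(m_2 y)$ over $[N]\times[N]$, evaluated at $a(y_1)$ and $a(y_2)$ at infinity, exactly as in the proof of Theorem \ref{thm:unrefinedBKformula}. Because $\eta_R$ is $K_\infty$-bi-invariant, $\pi_\infty(f_\infty^{(R)})$ preserves $K_\infty$-types. This gives two consequences:
\begin{itemize}[itemsep=-0.2em]
    \item On the weight-$2$ isotypic line of a representation $\pi_\infty\simeq\mathcal{D}(2)$, it acts by a scalar $c_R\to \int|\phi_2|^2/d_2^{-1}$-type constant, which converges by $L^2$-integrability and Schur orthogonality.
    \item It kills the weight-2 vector in principal series and in $\mathcal{D}(\kappa)$ for $\kappa\ge 4$, up to an error that tends to $0$. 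Indeed, $\langle\pi(f^{(R)})v_2,v_2\rangle=\int\eta_R\overline{\phi_2}\,\phi^\pi_{2}$, and the integral $\int\overline{\phi_2}\,\phi^\pi_2$ converges absolutely and vanishes by orthogonality, since $\phi_2$ decays like $e^{-2r}$ while tempered, or nearly tempered via Kim--Sarnak, coefficients decay like $e^{-(1/2-\theta)r}$.
\end{itemize}
To get exactly weight $2$ and control the continuous spectrum, it is cleaner to also project onto the $K_\infty$-type $2$. Taking the $(2,2)$ Fourier coefficient in $\theta$, the spectral side becomes $\sum_\pi \langle\pi_\infty(f_\infty^{(R)})v_2,v_2\rangle\sum_\varphi a(m_1)\overline{a(m_2)}$ plus the Eisenstein contribution. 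We then apply \eqref{def:discrete-weights} under the spectral assumption, and let $R\to\infty$ by dominated convergence, using a uniform bound from convexity and Weyl-law counting of $\pi$ cut by $f$ with spectral parameter weights. The discrete non-$\mathcal{D}(2)$ terms and the Eisenstein integral tend to $0$, leaving $\sum_{\mathcal{F}_2(f)}w(\pi,f)\lambda_\pi(m_1)\overline{\lambda_\pi(m_2)}$ times a normalising constant.

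On the geometric side we follow the Bruhat decomposition used for Theorem \ref{thm:unrefinedBKformula}. The geometric assumption makes the identity (Borel) cell contribute only when $m_1=m_2$; its nonarchimedean factor factorises as $\prod_p\delta_p$ via \eqref{def:local-weights}, and its archimedean factor is $\int_\mathbb{R}f_\infty^{(R)}(n(x))e(-m_1x)dx$. As $R\to\infty$ this converges to $\int\overline{\phi_2}(n(x))e(-mx)dx$, computed explicitly as a residue, giving the constant $\frac{1}{4\pi}$ after normalisation. The big cell gives $\sum_{c}\frac{H(m_1,m_2,c)}{c}I_R(m_1,m_2,c)$ with $I_R=\iint f_\infty^{(R)}(n(x)\,\omega\, a(\cdot)n(y))e(\dots)\,dx\,dy$. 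For the untruncated $\phi_2$ this integral is the classical weight-$2$ Petersson oscillatory integral $-\tfrac12\cdot 2\pi i^{2} J_1(4\pi\sqrt{m_1m_2}/c)$-type quantity. It converges only conditionally, since $\phi_2(n(x)\omega n(y))\sim (1+|x+y|^2+\dots)^{-1}$, so we evaluate it by first integrating in one variable, where it is absolutely convergent, and then using the standard contour shift.

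The main obstacle is interchanging $R\to\infty$ with the sum over $c$ on the geometric side. We need a bound $|I_R(c)|\ll c^{-1/2-\epsilon'}$ uniformly in $R$, since with Weil's bound $H\ll c^{1/2+\epsilon}$ the series then converges absolutely; for $c\ge 1$ we want something like $\min(1,c^{-1})$ up to logarithms. Our plan is to integrate by parts in $x$ and $y$, using the oscillation $e(m_1x+m_2y)$, to gain decay from the derivative of $\phi_2$. The terms where the derivative falls on $\eta_R$ are supported on the annulus $R\le r\le R+1$, where $\phi_2\ll e^{-2R}$, while the measure of that annulus in the $(x,y)$-coordinates grows like $e^{R}$; so these terms are $O(e^{-R})$ uniformly in $c$. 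This yields uniform domination and pointwise convergence of $I_R(c)$ to the $J_1$ expression, which completes the proof.
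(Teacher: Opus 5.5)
Your overall strategy (a bi-$K_\infty$-invariant cutoff of the weight-$2$ coefficient, the pre-trace formula for the truncated function, then a limit on both sides) is the paper's. However, the one limit that needs real work is left unjustified, and the step you call the main obstacle is not an obstacle.

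\emph{Spectral side.} You must let $R\to\infty$ inside the sum over the infinite family $\mathcal{F}_0(T,f)$ and inside the $t$-integral of the Eisenstein term. Dominated convergence there requires a bound on $\lambda_R(t)=\langle\pi_\infty(f_\infty^{(R)})v_2,v_2\rangle$ (the paper's $\lambda_T(t)$) that is uniform in $R$ \emph{and} decays polynomially in $t$. The reason is that the Whittaker factors are only controlled polynomially, e.g.\ $\sum_{|t_\pi|\le K}\sum_{\varphi}|W_\varphi(a(-m))|^2\ll K^{6}$ (Proposition \ref{prop:whittaker-growth-2}).

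Your argument only shows $\lambda_R(t)\to0$ for each fixed $\pi$. The natural uniform bound $\int|\phi_2(g)|\,|\langle\pi(g)v_2,v_2\rangle|\,dg$ does not decay in $t$. Convexity and Weyl-law counting control the Fourier coefficients, not this archimedean factor.

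This uniform decay is the technical core of the paper's proof. It writes $\lambda_T(t)$ in Iwasawa coordinates as in \eqref{eq:eigenvalue} and integrates by parts $A$ times in $y$. Using $\partial_y^m\rho(r_T(z))\ll y^{-m}$ (Lemma \ref{lemma:derivativebound}) and $\int_{\mathbb{R}}(x+i(1+y))^{-2-A}dx=0$, it obtains $|\lambda_T(t)|\ll_A(1+|t|)^{-A}M_A(T)$ with $M_A(T)\to0$ (Proposition \ref{prop:polydecay}). An alternative: the Casimir annihilates $\overline{\phi_2}$, so $(\tfrac14+t^2)^A\lambda_R(t)$ is an integral over the annulus where the cutoff varies, which can be bounded uniformly in $t$. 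Either way, some argument of this kind must be supplied. Also, ``vanishes by orthogonality'' is not a direct instance of Schur orthogonality, since principal series coefficients are not in $L^2(G(\mathbb{R}))$; it needs its own justification.

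\emph{Geometric side.} The second-cell archimedean integral is in fact absolutely convergent. For $\delta=\begin{psmallmatrix}0&-\mu\\1&0\end{psmallmatrix}$ one has $|f_\infty(n(-t_1)\delta n(t_2))|=\frac{\mu}{\pi}\bigl[(\mu+1+t_1t_2)^2+(t_2-t_1)^2\bigr]^{-1}$. The denominator contains $t_1^2t_2^2$, so your asymptotic $(1+|x+y|^2)^{-1}$ is wrong. Completing the square in $t_2$ gives $\iint|f_\infty|\,dt_1dt_2\le\pi\mu$ (Lemma \ref{lemma:second-cell-archim-bound}).

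Since $|f_\infty^{(R)}|\le|f_\infty|$, every truncated archimedean factor is bounded by $\pi c^{-2}$ uniformly in $R$. Via \eqref{eq:kloosterman-factorisation}, Weil's bound on the $c_0$-part and the trivial bound on the $c_N$-part give $\sum_c|H(m_1,m_2,c)|c^{-2}<\infty$. Dominated convergence then yields both the interchange and the pointwise limit to the $J_1$-term at once, with no integration by parts. The non-absolute convergence you are worried about lives in the sum over $c$, not in the individual orbital integrals: with only $|H|\ll c$ one would get the divergent $\sum c^{-1}$. That is why Weil's bound is needed.
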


\begin{remark}
    It is possible to generalise Theorems \ref{thm:refined-opposite-signs} and \ref{thm:p2-formula} by dropping the assumption $(m_1m_2,N)=1$, but the resulting formula becomes more complicated. See \cite[Remark 4.3]{generalised}.
\end{remark}

\subsection{Parity equidistribution}
As a simple corollary of the BK formula for opposite signs is the equidistribution result for the parity of a given family cut by $f$. In this section we consider the archimedean test function $h_\infty$ to be one of
\begin{equation}\label{def:testfunction1}
    h_\infty(t)=\frac{t^2+\frac{1}{4}}{T^2}\left[\frac{1}{\cosh\left(\frac{t-T}{\Delta}\right)}+\frac{1}{\cosh\left(\frac{t+T}{\Delta}\right) }\right]
\end{equation}
where $1 \leq \Delta < T/100$, or
\begin{equation}\label{def:testfunction2}
    h_\infty(t) = \frac{t^2+\frac{1}{4}}{T^2}\exp\left(-\left(\frac{t}{T}\right)^2\right)
\end{equation}
Each of \eqref{def:testfunction1}  and \eqref{def:testfunction2} can be associated to a bi-$K_\infty$-invariant function $f_\infty$, supported on $G^+(\mathbb{R})$: the Plancherel formula gives us that
\begin{equation*}
    f_\infty(1)=\frac{1}{4\pi}\int_{-\infty}^{\infty}h_\infty(t)t\tanh(\pi t)dt
\end{equation*}
and by trivial estimates we have in the respective cases
\begin{equation}
    f_\infty(1)\asymp\Delta T \hspace{1cm} \text{or} \hspace{1cm} f_\infty(1)\asymp T^2
\end{equation}

\begin{corollary}[Parity equidistribution]\label{cor:equidistribution}
    Let $h_\infty$ be one of \eqref{def:testfunction1} or \eqref{def:testfunction2}, if $f \in \mathcal{H}$ is as in Theorem \ref{thm:refined-opposite-signs}, then for $m \in \mathbb{N}$ with $(m,N)=1$, and $\epsilon \in \{-1,1\}$,
    \begin{equation}
        \sum_{\substack{\pi \in \mathcal{F}_0(f) \\ p(\pi)=\epsilon}}h_\infty(t_\pi)w(\pi,f)|\lambda_\pi(m)|^2=\frac{1}{2}f_\infty(1)\delta+O\left (\frac{f_\mathbb{A}(1)m^2}{T^2\kappa^2(\mathcal{F})} \exp\left(\frac{16\pi^2 m}{k(\mathcal{F})}\right)\right)
    \end{equation}
    where $\kappa(\mathcal{F})$ is the geometric conductor of $f$ (see \ref{section:kloosterman-weights}) and $f_\mathbb{A}(1)=f_\infty(1)f(1)$ with all other notation as before.
\end{corollary}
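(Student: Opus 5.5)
The plan is to combine the same-sign refined Bruggeman--Kuznetsov formula of \cite{KL-kuznetsov} and \cite{generalised} (with $m_1=m_2=m$) with the opposite-sign formula of Theorem \ref{thm:refined-opposite-signs} (with $m_1=m$, $m_2=-m$), and then take half their sum and half their difference. Write $S_\pm=\sum_{\pi:\,p(\pi)=\pm1}h_\infty(t_\pi)w(\pi,f)|\lambda_\pi(m)|^2$.

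\textbf{Step 1: the two formulas.} The same-sign formula has spectral side $S_++S_-$ plus a continuous contribution. Its geometric side is the diagonal term $f_\infty(1)\delta$ (normalised as in the Plancherel identity above, with $\delta=\prod_p\delta_p$) plus a Kloosterman sum weighted by the transform $H^+_\infty$. Theorem \ref{thm:refined-opposite-signs} has spectral side $S_+-S_-$ plus a continuous contribution twisted by $p(\pi_{\chi,\chi^{-1}})$. Its geometric side has no diagonal term, only $\sum_c H(m,-m,c)c^{-1}H^-_\infty(4\pi m/c)$. Adding and halving gives $S_\epsilon=\tfrac12 f_\infty(1)\delta+\tfrac12(\text{continuous terms})+\tfrac12(\text{Kloosterman terms})$.

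\textbf{Step 2: the Kloosterman terms.} First I check that the chosen $h_\infty$ satisfy \eqref{def:testfunction-conditions}. This holds because $1/\cosh$ and the Gaussian are holomorphic and decay exponentially in horizontal strips of width $A>13$; the strip width is fine since $\Delta\ge1$, and strictly one should first check that $\pi\Delta A<\pi/2$ or else shift to avoid the poles of $1/\cosh$, which may force a restriction. The factor $t^2+\tfrac14$ vanishes at $t=\pm i/2$, which kills possible exceptional contributions.

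For the transforms, I would use the uniform bound $K_{2it}(x)\ll$ together with the integral representation, and shift the contour in $t$ to $\operatorname{Im} t=-A'$ so as to gain powers of $x$. This should give $H^\pm_\infty(x)\ll \frac{f_\infty(1)}{T^2}x^2e^{x}$ for small $x$; the weak exponential is harmless, and it explains the $\exp$ factor in the corollary. Since $x=4\pi m/c$ and $c\ge\kappa(\mathcal F)$ on $\mathcal C(f)$, we have $x^2\le 16\pi^2m^2/\kappa^2$. Bounding $|H(m,\pm m,c)|/c$ trivially via the geometric assumption (the sums are controlled by $f(1)$ times a count of $c$) and summing $c^{-2}$-type tails over $c\in\mathcal C(f)$ yields $O\big(f_{\mathbb A}(1)m^2T^{-2}\kappa^{-2}\exp(16\pi^2m/\kappa)\big)$.

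\textbf{Step 3: the continuous terms.} These have the same sign structure. For the parity twist I use the fact that $p(\pi_{\chi,\chi^{-1}})=\chi_\infty(-1)$. The continuous integrals should be absorbed into the error term: their $t$-integral of $h_\infty$ is $\ll \Delta$ (resp.\ $T$) up to logarithms, which is smaller than $f_\infty(1)$. Alternatively, in the statement they may simply be included in the spectral sum as Eisenstein contributions; I would check which reading is intended and, if necessary, bound them with the trivial bounds $w(\pi_{it}(\chi),f)\ll f(1)$ and $|\lambda|\le d(m)$.

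\textbf{Main obstacle.} The hardest step is the uniform estimate of $H^-_\infty$ (and $H^+_\infty$) with the explicit $T^{-2}$ saving and $x^2e^{x}$ growth. The factor $(t^2+\tfrac14)/T^2$ is what supplies the $T^{-2}$, so I would integrate by parts twice using the Bessel differential equation, $(2it)^2K_{2it}=x^2K''+xK'-x^2K$, to transfer $t^2$ onto $x$-derivatives.
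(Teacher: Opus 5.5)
Your overall plan matches the paper's. You specialise the same-sign formula to $m_1=m_2=m$ and Theorem \ref{thm:refined-opposite-signs} to $(m,-m)$, add or subtract, halve, and bound the Kloosterman sides by the trivial bound times a bound on the Bessel transform. The paper does not estimate $H^+_\infty$ itself; it quotes the same-sign asymptotic as Proposition \ref{prop:positive-bound}.

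The gap is in the step you call the main obstacle. What is needed there is decay of $H^-_\infty(x)$ as $x=4\pi m/c\to 0$. With $|H(m,-m,c)|\ll c$, the $c$-sum is only controlled if $H^-_\infty(x)=O(x^{1+\eta})$. The factor $T^{-2}$, by contrast, is cosmetic, since $f_\infty(1)/T^2\asymp\Delta/T$, resp.\ $\asymp 1$.

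Neither mechanism you describe produces this decay:
\begin{itemize}
\item $K_{2it}(x)$ is entire and even in $t$, and on $\operatorname{Im}t=-A'$ it becomes $K_{2A'+2iu}(x)\asymp_u x^{-2A'}$. So moving the contour of the $K$-Bessel integral loses powers of $x$ rather than gaining them.
\item Integrating by parts with the Bessel equation merely trades $t^2$ for $x^2\partial_x^2+x\partial_x-x^2$, which gains nothing as $x\to 0$.
\end{itemize}

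The missing idea, used in Lemma \ref{lemma:H-minus-bound}, is to pass to $I$-Bessel functions. Write $K_{2it}=\frac{\pi}{2i\sinh(2\pi t)}(I_{-2it}-I_{2it})$ and use $\sinh(\pi t)/\sinh(2\pi t)=1/(2\cosh \pi t)$ together with the evenness of $h_\infty$. This gives $H^-_\infty(x)=\frac{i}{2}\int_{\mathbb{R}}I_{2it}(x)\frac{t\,h_\infty(t)}{\cosh \pi t}\,dt$. Now shift to $\operatorname{Im}t=-1$:
\begin{itemize}
\item The Poisson integral gives $I_{2+2iu}(x)\ll x^2e^{x}/|\Gamma(\tfrac{5}{2}+2iu)|$.
\item The growth $e^{\pi|u|}$ of $1/\Gamma$ is cancelled by $1/\cosh(\pi u)$, leaving $H^-_\infty(x)\ll x^2e^{x}\int_{\mathbb{R}}\frac{|h_\infty(u-i)|}{1+|u|}\,du\ll \frac{f_\infty(1)}{T^2}x^2e^{x}$.
\item The only pole crossed is that of $1/\cosh(\pi t)$ at $t=-i/2$, and it is cancelled by the zero of $t^2+\frac14$ in $h_\infty$. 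That is the role of this factor, not the removal of exceptional eigenvalues, which lie strictly inside $(-\frac{i}{2},\frac{i}{2})$.
\end{itemize}

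Your Step 3 also fails as stated. After halving, the Eisenstein part contributes $\frac12\bigl(1+\epsilon\,p(\pi_{\chi,\chi^{-1}})\bigr)$ times the full continuous integral, and this is not small. Take $m=1$, $\chi$ trivial and $\epsilon=+1$, whenever the trivial character lies in $\mathcal{F}_E(f)$. Up to local factors the weight is $|\zeta(1+2it)|^{-2}\gg(\log T)^{-2}$, so the term is $\gg\Delta$, resp.\ $T$, up to logarithms. Comparing this with $f_\infty(1)$ only shows it is below the main term. The stated error is of size $f_\infty(1)/T^2\asymp\Delta/T$, resp.\ $\asymp 1$, in the $T$-aspect, so the Eisenstein term cannot be absorbed. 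The paper never absorbs it: Proposition \ref{prop:positive-bound} and \eqref{eq:negativebound} carry ``$+(\text{cts.})$'' on the spectral side. What the argument actually proves is the corollary with the parity-$\epsilon$ continuous contribution kept on the left, and that is the reading you should adopt.

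Two smaller points:
\begin{itemize}
\item Your worry about \eqref{def:testfunction1} is legitimate but misstated. The poles of $1/\cosh((t\mp T)/\Delta)$ lie on $\operatorname{Im}t=\pm\pi\Delta/2$, so \eqref{def:testfunction-conditions} with $A>13$ needs $\Delta>26/\pi$. The shift to $\operatorname{Im}t=-1$, however, only needs $\pi\Delta/2>1$.
\item The trivial bound $|H(m,n,c)|\le c\,\kappa(\mathcal{F})f(1)$ carries a factor $\kappa(\mathcal{F})$. Against $\sum_{c\in\kappa(\mathcal{F})\mathbb{N}}c^{-2}\asymp\kappa(\mathcal{F})^{-2}$, this leaves only $\kappa(\mathcal{F})^{-1}$, as in Proposition \ref{prop:positive-bound}. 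So neither your count nor the paper's own computation delivers the $\kappa(\mathcal{F})^{-2}$ of the statement.
\end{itemize}
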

It is likely that the error term may be significantly improved, and that the weights may be altogether dropped, but the corollary above is meant to be a simple demonstration of the usefulness of such formulas, so any improvement is beyond the aim of this paper. We refer the reader to Section \ref{section:corollary} for a proof of the result above.

\subsection{Notation and conventions}
\emph{1.4.1 Fields}: Throughout the paper we work over the rationals $\mathbb{Q}$. We write $\mathbb{Q}_p$ for the field of $p$-adic rationals with ring of integers $\mathbb{Z}_p$, equipped with the absolute value $|\cdot|_p$. Let $\mathbb{A}$ and $\mathbb{A}_\text{fin}$ be the adeles and the finite adeles over $\mathbb{Q}$ respectively, and $\widehat{\mathbb{Z}}=\prod_p\mathbb{Z}_p$ the maximal compact open subgroup of $\mathbb{A}_\text{fin}$. \bigbreak

\noindent \emph{1.4.2 Groups}: Write $G = \text{PGL}_2$ and $[G]=G(\mathbb{Q})\backslash G(\mathbb{A})$. Denote further $K_\infty = \text{PSO}_2(\mathbb{R})$ with matrices $k_\theta=\begin{pmatrix}
    \cos \theta & \sin \theta \\ -\sin \theta & \cos \theta
\end{pmatrix} $ and $K_p = G(\mathbb{Z}_p)$, the standard maximal compact subgroup of $G(\mathbb{Q}_p)$. Define $G^+(\mathbb{R})=\{g\in G(\mathbb{R}):\det g >0\}$ and $K_\text{fin}=\prod_pK_p$. We also identify the open compact subgroups of $K_\text{fin}$:
$$K_0(N) = \left\{\begin{psmallmatrix}
    a & b \\ c & d
\end{psmallmatrix} \in K_\text{fin}\ \vert \ c\ \equiv 0\ \text{mod}\ N \right\}$$
$$ K(N) = \left\{\begin{psmallmatrix}
    a & b \\ c & d
\end{psmallmatrix} \in K_\text{fin}\ \vert \ a\equiv d\ \equiv 1\ \text{mod}\ N, \ b\equiv c \equiv 0 \ \text{mod} \ N  \right\}$$
Let $N \subset B \subset G$ denote the standard upper unipotent and Borel subgroups of $G$, and $A$ the subgroup of matrices of the form $a(y)=\begin{psmallmatrix}
    y & 0 \\ 0 & 1
\end{psmallmatrix}$ with $y$ in a commutative ring $R$, and let $n(x)=\begin{psmallmatrix}
    1 & x \\ 0 & 1
\end{psmallmatrix}$ for $x \in R$. Finally we take $[N]=N(\mathbb{Q})\backslash N(\mathbb{A})$. \bigbreak 

\noindent \emph{1.4.3 Haar Measures:} We take Haar measures as in \cite[\S 7]{Traces}. In particular choose $dx$ to be the Lebesgue measure of $\mathbb{R}$ and $d^\times x=dx/|x|$ on $\mathbb{R}^\times$, whereas at the finite places we choose $dx$ (resp $d^\times x$) to be so that $\mathbb{Z}_p$ has volume $1$ (resp. $\mathbb{Z}_p^\times$ has volume $1$). We also let $dk$ denote the probability measure on $K_\infty$, and take the measures on $N(\mathbb{R})$ and $A(\mathbb{R})$ to be those induced by $dx$ and $d^\times x$; these determine the Haar measure on $G(\mathbb{R})$ via the Iwasawa decomposition. Choose a Haar measure $dg$ on $G(\mathbb{Q}_p)$ so that $K_p$ has volume 1. Equip $\mathbb{A}$ and $G(\mathbb{A})$ with the restricted product measure and give the quotient measure to $[G]$ and $[N]$. \bigbreak 

\noindent \emph{1.4.4 Characters:} The trace formula requires weights in the form of characters of $N(\mathbb{Q})\backslash N(\mathbb{A})$. Since the group is isomorphic to $\mathbb{Q}\backslash\mathbb{A}$, we consider characters of the latter instead. Let $\theta(x)=\prod_v\theta_v(x_p)$ be the standard additive character $\mathbb{Q}\backslash \mathbb{A} \to \mathbb{C}$
defined by
\begin{equation}
    \theta_v(x)=\begin{cases}
    e(-x) \qquad \text{if } v=\infty \\
    e(r_p(x)) \hspace{0.43cm} \text{if } v <\infty
    \end{cases}
\end{equation}
where $r_p(x) \in \mathbb{Q}$ denotes the $p$-principal part of $x\in \mathbb{Q}_p$, a rational number with $p$-power denominator such that $x\in r_p(x)+\mathbb{Z}_p$. For $m \in \mathbb{Q}$ we denote
\begin{equation}
    \theta_m(x)=\theta(-mx) = \overline{\theta(mx)}
\end{equation}
All characters of $\mathbb{Q}\backslash \mathbb{A}$ arise this way. \bigbreak 

\noindent \emph{1.4.5 Hecke algebras:} Let $\mathcal{H}_p$ the \emph{local Hecke algebra}, that is the space of locally constant, compactly supported functions on $G(\mathbb{Q}_p)$, and let $G(\mathbb{Q}_p)^\wedge$ denote the generic unitary dual of $G(\mathbb{Q}_p)$, the space of classes of smooth generic irreducible unitary representations on a $\mathbb{C}$-vector space. We say $f \in \mathcal{H}_p$ is a \emph{newform projector} if for every $\pi\in G(\mathbb{Q}_p)^\wedge$ the operator $\pi(f)$ is either $0$ or projects onto the newform line of $\pi$. 

We construct the \emph{global Hecke algebra} by $\mathcal{H}=\bigotimes_{p<\infty} \mathcal{H}_p$ and given $f \in \mathcal{H}$ we define the \emph{level} of $f$ to be the least $N \in \mathbb{N}$ such that $f$ is bi-$K(N)$-invariant. 
We also define the global families $\mathcal{F}_0(f)$  (resp. $\mathcal{F}_\kappa(f)$) for the set of cuspidal automorphic representation $\pi$ that are spherical (resp. isomorphic to the discrete series $\mathcal{D}(\kappa)$) at $\infty$ and such that $\pi(f): V_\pi \to V_\pi$ is not the zero map. We define similarly the continuous family
\begin{equation}\label{def:continuous-family}
    \mathcal{F}_E(f)=\{\chi \in (\mathbb{Q}^\times \backslash \mathbb{A}^1)^\wedge: \ \text{there exists } t\in\mathbb{R} \ \text{with} \ \pi_{\chi|\cdot|^{it},\chi^{-1}|\cdot|^{-it}}(f)\neq 0 \}
\end{equation}
For more details and additional notation see the Sections 2.1.1 and 2.2.1.

\subsection{Acknowledgments}
The author thanks I. Petrow for a careful reading of the first draft of this paper. \\ This work was supported by the Engineering and Physical Sciences Research Council [grant
numbers EP/T517793/1, EP/W524335/1].

\section{Preliminary results}
We introduce further notation and notions needed for the proof of our main results. These mostly concern the necessary representation theoretic background, Maass/holomorphic forms on the groups $\Gamma(N)$ and $\Gamma_0(N)$, as well as properties of the generalised Kloosterman sums. 

\subsection{Local representation theory}
\emph{2.1.1 Archimedean }$(\mathfrak{g},K_\infty)$\emph{-modules:} Let $\mathfrak{g}$ be the complexified Lie algebra of $G(\mathbb{R})$. Since this paper is mostly concerned with analysis at the archimedean place, we begin with a description of the irreducible infinite dimensional $(\mathfrak{g},K_\infty)$-modules of $G(\mathbb{R})$. For $\epsilon \in \{0,1\}$ and $t \in \mathbb{R} \cup(-1/2,1/2)i $, we let $\chi=\text{sgn}^\epsilon |\cdot|^{it}$ and denote by $\pi:=\pi_{\chi,\chi^{-1}}$ the space of smooth right-$K_\infty$ finite functions on $G(\mathbb{R})$ that satisfy
\begin{equation}\label{eq:principal-series-equation}
    \phi\left(\begin{pmatrix}
    a & b \\ 0 & d
\end{pmatrix}g\right)=\chi(a)\chi^{-1}(d)\Bigr|\frac{a}{d}\Bigr|^{1/2}\phi(g)
\end{equation}
equipped with the inner product
\begin{equation}\label{eq:principal-series-inner-prod}
    \langle\phi,\psi \rangle=\int_{K_\infty}\phi(k)\overline{\psi(k)}dk
\end{equation}
A basis for $\pi$ consists of vectors $\{\phi_k: k \in 2\mathbb{Z}\}$ where $\phi_k$ is defined via $\phi_k(k_\theta)=e^{ik\theta}$. We call $\epsilon$ the \emph{parity} of $\pi$ and the parameter $t$ the \emph{spectral parameter} of $\pi$. We also say a vector has \emph{weight} $k$ if it transforms by the scalar $e^{ik\theta}$ under the action of $\pi(k_\theta)$. \bigbreak 

\noindent Now fix an integer $\kappa \geq 2$, and let $\pi:=\mathcal{D}(\kappa)$ be the \emph{weight $\kappa$ discrete series} equipped with an inner product as described in \cite[\S11.7]{Traces}. Let $v_0$ be the lowest weight vector of $\mathcal{D}(\kappa)$ normalised with respect to the inner product, we consider the matrix coefficient $\Phi_{\kappa,v_0}(g):=\langle \pi(g)v_0,v_0 \rangle$ and define
$$f_\infty(g)=\frac{1}{||\Phi_{\kappa,v_0}||^2_{L^2(G(\mathbb{R}))}}\overline{\Phi_{\kappa, v_0}(g)}$$
Explicitly for $g=\begin{psmallmatrix}
    a & b \\ c & d
\end{psmallmatrix}$ we have
\begin{equation}\label{def:matrix-coeff}
    f_\infty(g) = 
    \begin{cases}
        \displaystyle\frac{\kappa-1}{4\pi} \frac{\det(g)^{\kappa/2}(2i)^\kappa}{(-b+c+(a+d)i)^\kappa} \quad \text{if } \det(g)> 0 \\
        \hspace{2cm} 0 \hspace{2.2cm} \text{otherwise}
    \end{cases}
\end{equation}
The function $f_\infty$ lies in $L^2(G(\mathbb{R}))$ for $\kappa\geq 2$, as well as in $L^1(G(\mathbb{R}))$ when $\kappa>2$. In the latter case the operator $\pi(f_\infty)$ is a projector onto the newform line of $\pi$. For details, see e.g. \cite[\S 14]{Traces}. \bigbreak

\noindent \emph{2.1.2 Non-archimedean principal series:} Let $p$ be prime, for $\chi$ a character of $\mathbb{Q}_p^\times$, not necessarily of finite order, we define the principal series representation $\pi:=\pi_{\chi,\chi^{-1}}$ consisting of locally constant functions that satisfy \eqref{eq:principal-series-equation} with inner product obtained by replacing $K_\infty$ with $K_p$ in \eqref{eq:principal-series-inner-prod}. \bigbreak

\subsection{Global representation theory}
The setting of the trace formula is the Hilbert space $L^2([G])$, on which $G(\mathbb{A})$ acts on the right. Denote by $L_0^2([G])$ the subspace of cuspidal functions. The space $L^2([G])$ admits a decomposition into irreducible unitary representations $\pi$ of $G(\mathbb{A})$, which in turn  decompose as a restricted tensor product $\pi = \bigotimes'_v \pi_v$. Following \cite[\S 2.2.1]{Michel_2010}, we say a global representation is \emph{standard} if it appears in the spectral decomposition of $L^2([G])$. We emphasize here that at the archimedean place by a representation we are really talking about the underlying $(\mathfrak{g},K_\infty)$-module, but for convenience we abuse terminology throughout. \bigbreak

\noindent \emph{2.2.1 Induced model and Eisenstein series:} For a global finite order Hecke character $\chi$  and $s \in \mathbb{C}$, we construct the global representation $\pi_{\chi|\cdot|^s, \chi^{-1}|\cdot|^{-s}}=\bigotimes'_v\pi_{\chi_v|\cdot|_v^{s},\chi_v^{-1}|\cdot|_v^{-s}}$.  Sometimes we also use the notation $\pi_{s}(\chi)$ for $\pi_{\chi|\cdot|^{s},\chi^{-1}|\cdot|^{-s}}$.\\
Given a smooth vector $\phi \in \pi_{\chi,\chi^{-1}}$ we may construct an Eisenstein series
\begin{equation}\label{eq:Eisenstein-series}
    E(\phi_s,g)=\sum_{\gamma \in B(\mathbb{Q})\backslash G(\mathbb{Q})}\phi_s(\gamma g) \qquad (g \in G(\mathbb{A}))
\end{equation}
where the map $\pi_{\chi,\chi^{-1}}\to \pi_{\chi|\cdot|^s,\chi^{-1}|\cdot|^{-s}} $ with $\phi\mapsto \phi_s$ is an isomorphism of vector spaces given by $\phi_s(g)=|a/d|^s\phi(g)$ for $g=\begin{psmallmatrix}
    1 & x \\ & 1
\end{psmallmatrix}\begin{psmallmatrix}
    a & \\ & d
\end{psmallmatrix}k$ for $a,d \in \mathbb{A}^\times, x \in \mathbb{A}, k \in K_\infty\times K_\text{fin}$.
The sum on the right of \eqref{eq:Eisenstein-series} is absolutely convergent for $Re(s)>1/2$, but $E(\phi_s,g)$ admits a meromorphic continuation to $\mathbb{C}$, holomorphic on the line $Re(s)=0$ (see e.g. \cite[\S 3.7]{Bump}). \bigbreak

\noindent \emph{2.2.2 Orthocomplement of the cuspidal space:} We may write $L^2([G])=L_0^2([G])\oplus L^2_\text{res}([G])\oplus L^2_\text{cts}([G])$ where $L^2_\text{res}([G])$ and $L^2_\text{cts}([G])$ are the \emph{residue} and \emph{continuous} spectrum respectively. Given $\chi$ quadratic, denoting $\phi_\chi(g)=\chi(\det g)$ we have the decomposition
$$L^2_\text{res}([G])=\bigoplus_{\chi^2=1}\mathbb{C}\phi_\chi$$
whereas the continuous spectrum admits a decomposition in terms of (global) principal series
$$L^2_\text{cts}([G])=\bigoplus_{\substack{\chi \ \text{finite}\\ \text{order}}}\int_{\mathbb{R}}\pi_{it}(\chi)$$
For further details, see for instance \cite[\S 4D]{Gelbart-Jacquet}. \bigbreak

\noindent \emph{2.2.3 Miscellaneous:} For a standard generic automorphic representation $\pi$ of $G(\mathbb{A})$ that is spherical at $\infty$, we denote
\begin{equation}\label{def:parity}
    p(\pi)=(-1)^\epsilon \quad \text{where} \quad \pi_\infty\simeq \pi(\text{sgn}^\epsilon|\cdot|^{it},\text{sgn}^\epsilon |\cdot|^{-it})
\end{equation}
and 
\begin{equation}\label{def:spectral parameter}
    t_\pi=t \quad \text{where} \quad \pi_\infty\simeq \pi(\text{sgn}^\epsilon|\cdot|^{it},\text{sgn}^\epsilon |\cdot|^{-it})
\end{equation}
which we refer to as the parity and spectral parameter of $\pi$. 

\subsection{Whittaker/Fourier coefficients} Let $\pi$ be a standard generic automorphic representation of $G(\mathbb{A})$, and let $\varphi \in \pi$ be a smooth vector. We introduce the constant term and the Whittaker period for $\varphi$, defined respectively as
$$\varphi_N(g)=\int_{\mathbb{Q}\backslash \mathbb{A}}\varphi(n(x)g)dx \qquad \text{and}\qquad W_\varphi(g)=\int_{\mathbb{Q}\backslash \mathbb{A}}\varphi(n(x)g)\overline{\theta (x)}dx$$
Then for almost all $g \in G(\mathbb{A})$, the vector $\varphi$ admits a Fourier expansion of the form
\begin{equation}
    \varphi(g)=\varphi_N(g)+\sum_{y \in \mathbb{Q}^\times}W_\varphi(a(y)g)
\end{equation}
Recall that $\pi^{\kappa\times K(N)}$ is the isotypic subspace of $\pi$ defined between \eqref{def:discrete-weights} and \eqref{def:cts-Hecke}. Given $\varphi \in \pi^{K_\infty \times K(N)}$ (resp. $\varphi \in \pi^{\kappa \times K(N)}$) we can construct a Maass form/ Eisenstein series (resp. weight $\kappa$ holomorphic form) on $\Gamma(N)$ via
$$u_\varphi(x+iy)=\varphi(\begin{psmallmatrix}
    y & x \\ & 1
\end{psmallmatrix} \times 1_\text{fin}) \qquad \left(\text{resp. } u_\varphi(x+iy)=y^{-\kappa/2}\varphi(\begin{psmallmatrix}
    y & x \\ & 1
\end{psmallmatrix} \times 1_\text{fin})\right)$$
for $x+iy \in \mathbb{H}$. Then $u$ admits the Fourier expansion
\begin{equation}
    u(x+iy)=\sum_{n \in \mathbb{Z}}a_u(n/N,y)e(\frac{n}{N}x) \qquad a_u(n/N,y)=\frac{1}{N}\int_0^N u(x+iy)e\left(-\frac{n}{N}x\right)dx
\end{equation}
(for holomorphic forms the summation can be restricted over $n \in \mathbb{N}$). Let $u$ be a Maass form/ Eisenstein series, then
for $m = n/N \neq 0$, we may define the rescaled Fourier coefficients $a_u(m)$ by
\begin{equation}\label{def:maasscoefficients}
    \frac{a_u(m)}{\sqrt{|m|}}W(my)=a_u(m,y) \qquad \text{where } W(y)=2(\text{sgn}y)^\epsilon \left(\frac{\cosh (\pi t)}{\pi}\right)^{1/2}\sqrt{|y|}K_{it}(2\pi |y|)
\end{equation}
whereas if $u$ is a holomorphic form
\begin{equation}\label{def:holomorphic-coefficients}
    \frac{a_u(m)}{\sqrt{m}}W(my)=y^{\kappa/2}a_u(m,y) \qquad \text{where } W(y)=\begin{cases} \left(\frac{(4\pi y)^\kappa}{\Gamma(\kappa)}\right)^{1/2}e^{-2\pi y} \quad \text{if }y>0 \\
        0 \hspace{2.9cm} \text{if }y<0
    \end{cases}
\end{equation}

\begin{proposition} Let $\pi$ be a standard generic representation of $G(\mathbb{A})$, and let $\varphi \in \pi^{K_\infty\times K(N)}$ if $\pi$ is spherical at $\infty$, or $\varphi \in \pi^{\kappa\times K(N)}$ if $\pi_\infty\simeq \mathcal{D}(\kappa)$.\\
Let $m \in\mathbb{Q}^\times$ and $y \in \mathbb{R}^\times$, the Fourier coefficients and the Whittaker coefficients are related by
    \begin{equation}\label{eq:whittaker-fourier}
        W_\varphi(a(-my))=\begin{cases}
            a_{u_\varphi}(m,y) \quad \text{if } m \in \frac{1}{N}\mathbb{Z} \\
            0 \hspace{1.7cm} \text{otherwise}
        \end{cases}
    \end{equation}
\end{proposition}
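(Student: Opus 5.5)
The plan is to compute the Whittaker period directly by unfolding the adelic integral over $\mathbb{Q}\backslash\mathbb{A}$ into a classical integral over $[0,N]$, using the right $K(N)$-invariance of $\varphi$. I read $a(-my)$ as the element whose archimedean component is $a(-my)$ and whose finite component is $1_{\operatorname{fin}}$, matching the definition of $u_\varphi$.

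First I would relate the period at $a(-my)$ to an integral twisted by $\theta_m$, using the conjugation identity $n(x)a(t)=a(t)n(t^{-1}x)$. The element $a(-m)$ lies in $G(\mathbb{Q})$, and $\varphi$ is left $G(\mathbb{Q})$-invariant. Since $|{-m}|_{\mathbb{A}}=1$, the measure on $\mathbb{Q}\backslash\mathbb{A}$ is invariant under the rescaling $x\mapsto -mx$. Using these facts I would rewrite
\[
W_\varphi(a(-my))=\int_{\mathbb{Q}\backslash\mathbb{A}}\varphi\bigl(n(x)\,a(y)_\infty\times h_{\operatorname{fin}}\bigr)\,\overline{\theta(-mx)}\,dx
\]
for a suitable finite-adelic element $h_{\operatorname{fin}}$. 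After moving the rational element $a(-m)$ to the left, $h_{\operatorname{fin}}$ is built from $a(-m)^{-1}_{\operatorname{fin}}$. The step I expect to be the main obstacle is making sure $h_{\operatorname{fin}}$ can be absorbed, and that the sign and normalisation of $\theta_\infty(x)=e(-x)$ line up so the resulting character is exactly $e(-mx_\infty)$ at infinity. If $h_{\operatorname{fin}}$ does not lie in $K(N)$, I would instead perform the change of variables purely at the finite places. The aim is to reach $\int\varphi(n(x)a(y)_\infty\times 1_{\operatorname{fin}})\theta_m(x)\,dx$ with $\theta_m(x)=\overline{\theta(mx)}$.

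Next I unfold. A fundamental domain for $\mathbb{Q}\backslash\mathbb{A}$ is $[0,N)\times N\widehat{\mathbb{Z}}$, which has total measure $N\cdot N^{-1}=1$. For $x_{\operatorname{fin}}\in N\widehat{\mathbb{Z}}$ we have $n(x_{\operatorname{fin}})\in K(N)$, and $n(x_{\operatorname{fin}})$ commutes with the archimedean factor. Hence $\varphi(n(x_\infty)n(x_{\operatorname{fin}})a(y)_\infty)=\varphi(n(x_\infty)a(y)_\infty\times 1_{\operatorname{fin}})=u_\varphi(x_\infty+iy)$, because $n(x)a(y)=\begin{psmallmatrix} y & x\\ & 1\end{psmallmatrix}$. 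The integral therefore factors as
\[
\frac{1}{N}\int_0^N u_\varphi(x+iy)\,e(-mx)\,dx\cdot N\int_{N\widehat{\mathbb{Z}}}\prod_p\theta_p(mx_p)^{\pm1}\,dx_{\operatorname{fin}}.
\]
The second factor integrates a character of the compact group $N\widehat{\mathbb{Z}}$ that is trivial precisely when $mN\widehat{\mathbb{Z}}\subseteq\widehat{\mathbb{Z}}$, that is, when $m\in\frac1N\mathbb{Z}$.

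In that case the second factor equals $N\cdot N^{-1}=1$, and the first factor is by definition $a_{u_\varphi}(m,y)$. Otherwise the finite integral vanishes by orthogonality of characters, which gives the zero case. The same argument applies verbatim in the holomorphic case, with $u_\varphi$ defined using the factor $y^{-\kappa/2}$, once that normalisation is carried through. Absolute convergence is not an issue, since everything is an integral of a continuous function over a compact set.
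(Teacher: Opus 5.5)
\noindent The gap is your reading of $a(-my)$, and the obstacle you flag in the first step cannot be overcome under it. Suppose the finite component is $1_{\operatorname{fin}}$. Then $h_{\operatorname{fin}}=a(-m)^{-1}_{\operatorname{fin}}$ lies in $K(N)$ only for $m=-1$ (or $m=1$ when $N\le 2$). Right translation by it replaces $u_\varphi$ by a different classical form, so under your reading the statement is false in general.

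Concretely, unfold $W_\varphi(a(t)_\infty\times 1_{\operatorname{fin}})$ directly, with no conjugation. This gives $\frac1N\int_0^N u_\varphi(x+it)e(x)\,dx=a_{u_\varphi}(-1,t)$ for $t>0$. So for $m=-2$ your reading produces $a_{u_\varphi}(-1,2y)$, while the proposition asserts $a_{u_\varphi}(-2,y)$. These are different Fourier coefficients and are not equal in general.

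The fallback of changing variables ``purely at the finite places'' is not available either. The map $x_{\operatorname{fin}}\mapsto -mx_{\operatorname{fin}}$ alone does not preserve the diagonal copy of $\mathbb{Q}$, so it does not descend to $\mathbb{Q}\backslash\mathbb{A}$.

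The fix is to read $a(-my)$ as $a(-m)\,a(y)_\infty$, with $a(-m)\in G(\mathbb{Q})$ embedded diagonally. Its archimedean component is $a(-my)$ and its finite component is $a(-m)_{\operatorname{fin}}$. This is the meaning consistent with the expansion $\varphi(g)=\varphi_N(g)+\sum_{\xi\in\mathbb{Q}^\times}W_\varphi(a(\xi)g)$. It also matches how the proposition is used in the spectral computations, where it arises from $\int_{\mathbb{Q}\backslash\mathbb{A}}\varphi(n(x)a(y)_\infty)\overline{\theta_m(x)}\,dx$.

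With this reading your conjugation step leaves nothing to absorb. Use $n(x)a(-m)=a(-m)n(-x/m)$, then left $G(\mathbb{Q})$-invariance, then the substitution $x\mapsto -mx$, which preserves measure since $|m|_{\mathbb{A}}=1$. This gives exactly $\int_{\mathbb{Q}\backslash\mathbb{A}}\varphi(n(x)a(y)_\infty)\overline{\theta(-mx)}\,dx$. Note that the character is $\overline{\theta(-mx)}=\overline{\theta_m(x)}$, not $\theta_m(x)$ as in your ``aim'' line; your later display with $e(-mx)$ is the correct one.

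From there your unfolding over $[0,N)\times N\widehat{\mathbb{Z}}$ is correct, including the orthogonality argument for $m\notin\frac1N\mathbb{Z}$. It is the standard argument behind the lemma of Gelbart that the paper cites in place of a proof. In the holomorphic case the same computation yields $y^{\kappa/2}a_{u_\varphi}(m,y)$. That agrees with the displayed formula only at $y=1$, which is the only value the paper uses there.
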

\begin{proof}
    See for instance \cite[Lemma 3.6]{Gelbart}.
\end{proof}
The results above still hold if we consider $K_0(N)$ instead of $K(N)$, with the only differences being that $u_\varphi$ is now a Maass/Eisenstein/holomorphic form on $\Gamma_0(N)$ and we can replace $\tfrac{1}{N}\mathbb{Z}$ with $\mathbb{Z}$ in the Fourier expansion. \bigbreak

\subsection{Distribution results for spectral parameters}
As previously discusses, the newform matrix coefficient of $\mathcal{D}(\kappa)$ fails to be square integrable for $\kappa=2$. We circumvent this issue in Section \ref{section:holomorphic} by taking a smooth compactly supported cutoff and apply a limiting argument. The tradeoff is in the form of extra contribution from the cuspidal and continuous space; such contribution needs to be estimated in order to obtain cancellation in the limit.

Throughout this section let $\{u_j\}$ be an orthonormal basis of cusp Maass forms on $\Gamma_0(N)$, with spectral parameter $t_j$.

\begin{proposition}[Weak Weyl's law] \label{prop:weyl-law}Let $\{u_j\}$ and $t_j$ be defined as above then for any $T>0$
\begin{equation}
    |\{j:|t_j|\leq T\}|\ll T^2
\end{equation}
with the implicit constant depending on $N$.
\begin{proof}
    See \cite[Theorem 1.1]{Donnelly1982}.
\end{proof}
\end{proposition}
Using the trivial bound in \cite[Theorem 3.2]{Spectralmethods}, we have $|a_{u_j}(m)|^2\ll_{N,m}(1+|t_\pi|)$ therefore we may deduce the following result on sums of Fourier coefficients.
\begin{lemma}\label{lemma:maass-growth}
    Let $\{u_j\}$ and $t_j$ be defined as above, let $T\geq 1$, $m \in \mathbb{N}$, then
    \begin{equation}
        \sum_{|t_j|\leq T}|a_{u_j}(m)|^2\ll T^3
    \end{equation}
    where the implied constant only depends on $m,N$.
\end{lemma}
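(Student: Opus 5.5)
The bound follows by combining the pointwise bound on Fourier coefficients quoted just before the statement with the weak Weyl law of Proposition \ref{prop:weyl-law}. I would use a dyadic decomposition in the spectral parameter, so that the growth factor from each coefficient is uniform on each piece.

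First I record the input. By \cite[Theorem 3.2]{Spectralmethods}, for each fixed $m$ and $N$ there is a constant $C=C(m,N)$ with
\begin{equation*}
|a_{u_j}(m)|^2\le C(1+|t_j|)
\end{equation*}
for every $j$. This holds uniformly in $j$. It also holds for the finitely many exceptional $t_j$ lying in $i(-1/2,1/2)$, where $1+|t_j|\le 3/2$.

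Next I split the range $|t_j|\le T$ into the piece $|t_j|\le 1$ and the dyadic shells $2^{k-1}<|t_j|\le 2^k$ for $1\le k\le K:=\lceil\log_2 T\rceil$.

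\begin{itemize}
\item On the piece $|t_j|\le 1$, each coefficient satisfies $|a_{u_j}(m)|^2\le 2C$. Proposition \ref{prop:weyl-law} bounds the number of such $j$ by $O(1)$, so this piece contributes $O(1)$.
\item On the $k$-th shell, each term is at most $C(1+2^k)\ll 2^k$. The number of terms is at most $|\{j:|t_j|\le 2^k\}|\ll 4^k$, so the shell contributes $\ll 8^k$.
\end{itemize}

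Summing the geometric series over $k\le K$ gives $\ll 8^K\ll T^3$, since $2^K\le 2T$. Adding the $O(1)$ from the first piece and using $T\ge1$ gives the claim.

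Alternatively, one can skip the dyadic decomposition. Since the pointwise bound gives $|a_{u_j}(m)|^2\le C(1+T)$ for every $j$ with $|t_j|\le T$, the sum is at most $C(1+T)\cdot|\{j:|t_j|\le T\}|\ll T\cdot T^2=T^3$. The implied constants depend only on $m$ and $N$, via the Weyl law and the coefficient bound.

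The only point needing care is the dependence of the constants. The pointwise bound must be uniform in $j$, depending only on $m$ and $N$. It must also apply to the normalisation \eqref{def:maasscoefficients} of $a_u(m)$, which includes the factor $\cosh(\pi t)^{1/2}$ in $W$. Matching this normalisation with that of \cite{Spectralmethods} is the step I would check first, since a mismatch there changes the power of $(1+|t_j|)$ and hence the exponent in $T^3$.
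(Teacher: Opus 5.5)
Your proposal is correct and matches the paper's argument, which is exactly your one-line alternative: the pointwise bound $|a_{u_j}(m)|^2\ll_{N,m}(1+|t_j|)$ from \cite[Theorem 3.2]{Spectralmethods} is multiplied by the count $\ll T^2$ from Proposition \ref{prop:weyl-law}. The dyadic decomposition is valid but unnecessary, since the pointwise bound is already uniform over $|t_j|\le T$.
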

We highlight that these estimates are very weak and in no way optimal, as we wish to prove the theorems of this paper with the mildest possible assumptions with the hope that our techniques might apply to other contexts.

\subsection{Kloosterman sums and local weights}\label{section:kloosterman-weights}
Given $f \in \mathcal{H}$ non-zero, for $m,n \in \mathbb{Q}$ and $c\in \mathbb{Q}_+$ we can attach a \emph{generalised Kloosterman sum} defined as
\begin{equation}\label{def:kloosterman-sum}
    H(m,n,c)=\iint_{\mathbb{A}_\text{fin}^2}f(\begin{pmatrix}
        1 & -t_1 \\ & 1
    \end{pmatrix}\begin{pmatrix}
        & -c^{-2} \\ 1 &
    \end{pmatrix}\begin{pmatrix}
        1 & t_2 \\ & 1
    \end{pmatrix})\theta_\text{fin}(mt_1-n t_2)dt_1 dt_2
\end{equation}
We say that $c\in \mathbb{Q}_+$ is an \emph{admissible modulus} if there exists a pair $(m,n) \in \mathbb{Q}^2$ such that $H(m,n,c)\neq 0$, and write $\mathcal{C}(\mathcal{F})$ for the set of admissible moduli. By \cite[Proposition/Definition 1.3]{generalised}, if $f=\bigotimes_p f_p$ is a pure tensor so that the support of $f_p$ is in $\{g \in G(\mathbb{Q}_p): v_p(\det g)\in 2\mathbb{Z}\}$, there exists a $q'\in \mathbb{Q_+}$ such that $\mathcal{C}(f) \subseteq q'\mathbb{Z}$. If the set $\mathcal{C}(\mathcal{F})$ is non-empty, we write $\kappa(\mathcal{F})$ for the maximal such $q'$, and call it the \emph{geometric conductor} of $f$.   \bigbreak

\begin{proposition}\label{prop:kloosterman-properties} Suppose that $f \in \mathcal{H}$ satisfies the Geometric and Spectral assumptions, then for $(m,n)\in \mathbb{Q}^2 $ and $c\in \mathbb{Q}_+$:
\begin{enumerate}[itemsep=0em, label=(\arabic*)]
    \item The set of admissible moduli $\mathcal{C}(f)$ is non-empty,
    \item If $f$ is a function of level $N$, it is bi-$K_0(N)$-invariant,
    \item The sums $H(m,n,c)$ satisfy the trivial bound
    \begin{equation}\label{eq:trivial-bound}
        |H(m,n,c)|\leq c\kappa(\mathcal{F)}f(1),
    \end{equation}
    \item $H(m,n,c)=0$ unless $(m,n)\in \mathbb{Z}^2$ and $c \in \mathbb{N}$.
\end{enumerate}
\end{proposition}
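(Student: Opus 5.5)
The four claims are local statements in disguise, so I will reduce each to a computation at a single prime $p$ and then multiply. By the Spectral assumption $f=\bigotimes_p f_p$ is a pure tensor. Hence the generalised Kloosterman sum \eqref{def:kloosterman-sum} factors as $H(m,n,c)=\prod_p H_p(m,n,c)$, where
\begin{equation*}
H_p(m,n,c)=\iint_{\mathbb{Q}_p^2} f_p\!\left(n(-t_1)\begin{psmallmatrix} & -c^{-2}\\ 1 & \end{psmallmatrix} n(t_2)\right)\theta_p(mt_1-nt_2)\,dt_1\,dt_2 .
\end{equation*}
For almost all $p$ we have $f_p=1_{K_p}$, and there $H_p$ is the classical local Kloosterman integral. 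Note also that both kinds of local function allowed by the Spectral assumption are supported in $ZK_p$ up to the conjugation by $a(y)$ from the Geometric assumption. In particular they are supported on $\{v_p(\det g)\in 2\mathbb{Z}\}$, so the result of \cite{generalised} quoted above applies and gives $\mathcal{C}(f)\subseteq \kappa(\mathcal{F})\mathbb{Z}$ as soon as $\mathcal{C}(f)$ is non-empty.

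\textbf{Claim (2).} I will check this place by place. If $f_p=\nu(p^c)1_{K_0(p^c)}$, it is bi-$K_0(p^c)$-invariant by construction, and $p^c$ must divide the level. If $f_p$ is a newform projector, then $\pi(f_p)$ is either $0$ or projects onto the newform line, which is $K_0(p^{a(\pi)})$-fixed. Applying Plancherel inversion on the generic spectrum, $f_p(g)=\int \mathrm{tr}(\pi(f_p)\pi(g^{-1}))\,d\mu(\pi)$, each integrand is bi-invariant under $K_0(p^{a(\pi)})$. Bi-$K(N)$-invariance forces $a(\pi)\le v_p(N)$ on the support, so $f_p$ is bi-$K_0(p^{v_p(N)})$-invariant.

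\textbf{Claims (4) and (3).} For (4) I will use invariance under the diagonal part together with the unipotent part of $K_0(N)$. Replacing $t_1$ by $t_1+u$ with $u\in\mathbb{Z}_p$ leaves the matrix argument invariant under left multiplication by $n(-u)\in K_0(N)$. Therefore $H_p$ is multiplied by $\theta_p(mu)$ for all $u\in\mathbb{Z}_p$, which forces $m\in\mathbb{Z}_p$; the same argument with $t_2$ gives $n\in\mathbb{Z}_p$. This holds for every $p$, so $m,n\in\mathbb{Z}$. Similarly, $c\in\mathbb{N}$ should follow from (1) combined with $\kappa(\mathcal{F})\in\mathbb{N}$: by the Geometric assumption the support lies in $a(y)^{-1}K_{\mathrm{fin}}a(y)$, and the Bruhat cell of $\begin{psmallmatrix} & -c^{-2}\\ 1&\end{psmallmatrix}$ meets this compact set only when $c$ is integral at every $p$. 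For (3), I will bound the integrand by $\|f_p\|_\infty=f_p(1)$ (since $f_p$ is, up to scaling, a projector, the maximum is attained at the identity). This reduces $|H_p|$ to $f_p(1)$ times the measure of the set of $(t_1,t_2)$ for which the argument lies in the support. Writing out the Bruhat decomposition, that set has measure $|c|_p^{-1}$ times a factor coming from the conjugation by $a(y)$. Taking the product over $p$ gives $c\cdot(\text{conductor factor})\cdot f(1)$, and the factor should be at most $\kappa(\mathcal{F})$.

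\textbf{Claim (1) and the main obstacle.} For (1) I will exhibit an explicit nonvanishing local sum. Take $c=\kappa(\mathcal{F})$, or more naively $c=\prod_p p^{e_p}$ with $e_p$ large, and $m=n=0$. Then $H_p(0,0,c)$ is the integral of $f_p$ over a set that, for suitable $c$, contains a neighbourhood of a point in the support. Since $f_p\ge0$ in the $1_{K_0}$ case this integral is positive. The hardest part is the newform-projector case, where $f_p$ is not sign-definite and its support is not explicit. There I would try to choose $(m,n)$ using Fourier inversion in $t_1,t_2$, so that some $H_p(m,n,c)\neq0$ whenever the function $(t_1,t_2)\mapsto f_p(\cdots)$ is not identically zero. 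This function is nonzero for some $c$, since $f_p\neq0$ and the big Bruhat cell is dense. Making the bound in (3) exactly equal to $c\kappa(\mathcal{F})f(1)$ is where I expect the computation to be most delicate.
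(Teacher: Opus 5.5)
The paper proves nothing here itself; it cites \cite{generalised} for each item. A self-contained local argument is therefore a legitimate alternative, and your treatment of (2) and of $(m,n)\in\mathbb{Z}^2$ in (4) is sound. The genuine gap is the integrality of $c$ in (4). Write $g=n(-t_1)w_\mu n(t_2)=\begin{psmallmatrix}-t_1&-\mu-t_1t_2\\1&t_2\end{psmallmatrix}$ with $\mu=c^{-2}$. The condition $g\in a(y)^{-1}K_pa(y)$ means $\lambda\,a(y)ga(y)^{-1}\in\mathrm{GL}_2(\mathbb{Z}_p)$ for some scalar $\lambda$. The determinant forces $v_p(\lambda)=v_p(c)$, and the lower-left entry $\lambda y^{-1}$ then only gives $v_p(c)\ge v_p(y)$. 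The Geometric assumption allows any $y\in\mathbb{Q}_+$, so your assertion that the cell meets $a(y)^{-1}K_{\mathrm{fin}}a(y)$ only for integral $c$ is false as stated. Your fallback via $\kappa(\mathcal{F})\in\mathbb{N}$ is circular: the cited result only gives $\kappa(\mathcal{F})\in\mathbb{Q}_+$, and $\kappa(\mathcal{F})\in\mathbb{N}$ is equivalent to the claim. The missing step uses (2). If $g\in\operatorname{supp}f_p$, then $n(u)g\in\operatorname{supp}f_p\subseteq a(y)^{-1}K_pa(y)$ for all $u\in\mathbb{Z}_p$. Hence $n(u)$ lies in this group, i.e. $n(yu)\in K_p$, which forces $v_p(y)\ge 0$ and so $v_p(c)\ge v_p(y)\ge 0$.

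Claims (1) and (3) stop exactly at the decisive points. In (1), taking $c=\kappa(\mathcal{F})$ presupposes $\mathcal{C}(f)\neq\emptyset$. Density of the big cell only shows that $F_\mu(t_1,t_2)=f_p(n(-t_1)w_\mu n(t_2))$ is not identically zero for some $\mu\in\mathbb{Q}_p^\times$, not for some $\mu=c^{-2}$ with $c\in\mathbb{Q}_+$. Here is what is needed. First, $v_p(\mu)$ is even by the determinant condition. Second, left invariance under $a(u)$, $u\in\mathbb{Z}_p^\times$ (again from (2)), gives $F_{u\mu}(ut_1,t_2)=F_\mu(t_1,t_2)$, so at each newform-projector prime you may take $\mu=p^{2j_p}$. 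At the $1_{K_0}$ primes, take $v_p(c)$ large with $(m,n)$ near $0$, as you had. Then take $c$ supported on $p\mid N$ with these local valuations, and use $H_p(m,n,c)=1$ for $p\nmid N$ and $m,n\in\mathbb{Z}_p$. Finally pick global $(m,n)$ by the Chinese remainder theorem, using that $H_p(\cdot,\cdot,c)$ is locally constant, being the Fourier transform of a compactly supported function.

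In (3), your sup-norm step is fine, since $f_p=f_p^{*}*f_p$ and Cauchy--Schwarz give $|f_p(g)|\le f_p(1)$. But the ``conductor factor'' has to be pinned down. Substituting $s_i=ct_i$, the support condition becomes $s_1,s_2\in\mathbb{Z}_p$ and $v_p(1+s_1s_2)\ge v_p(c)-v_p(y)$, a set of $(t_1,t_2)$-measure at most $|c|_p^{-1}|y|_p^{-1}$. Hence $|H(m,n,c)|\le c\,y\,f(1)$. Finally $y\le\kappa(\mathcal{F})$: because $v_p(c)\ge v_p(y)$ for every admissible $c$, $y$ divides every element of $\mathcal{C}(f)$ and hence divides $\kappa(\mathcal{F})$.
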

\begin{proof}
    All proofs are contained in \cite{generalised}. More precisely, see Lemmas 3.5 and 3.6 for \emph{(1)}, Section 1.3.2 and Proposition 4.1 for \emph{(2)}, Theorem 3.8 and Lemma 4.7 for \emph{(3)}, and Lemma 4.6 for \emph{(4)}.
\end{proof}

Let $\Sigma$ the Borel $\sigma$-algebra of $G(\mathbb{Q}_p)^{\wedge}$ with respect to the Fell topology, and $\mu$ a Haar measure on $G(\mathbb{Q}_p)$, there exists (see e.g. \cite[Chapter 18]{dixmier1969}) a unique $\sigma$-finite measure $\widehat{\mu}$ on $(\overline{G}(\mathbb{Q}_p)^{\wedge},\Sigma)$ such that
\begin{equation}
    \int_{{G}(\mathbb{Q}_p)}|f(g)|^2d\mu = \int_{{G}(\mathbb{Q}_p)^\wedge}||\pi(f)||_{HS}^2 \  d\widehat{\mu}
\end{equation}
for all (local) $f$ compactly supported on $G(\mathbb{Q}_p)$. We refer to $d\widehat{\mu}$ as the \emph{Plancherel measure}. \bigbreak 

We may now introduce the local weights $\delta_p$ appearing in Theorem \ref{thm:p2-formula}. For $\pi \in G(\mathbb{Q}_p)^\wedge$ let
\begin{equation}
    \mathcal{L}_\pi(1) =\begin{cases}
        \frac{(1-p^{-2})}{(1-e^{2i\theta}p^{-1})(1-p^{-1})(1-e^{-2i\theta}p^{-1})} \quad \text{if } \pi \simeq \pi(|\cdot|_p^{i\theta/\log p},|\cdot|_p^{-i\theta/\log p}) \\
        (1+1/p)^{-1} \hspace{3.1 cm} \text{if } c(\pi)=1\\
        ( 1-1/p) \hspace{3.5cm} \text{if } c(\pi)\geq 2
    \end{cases}
\end{equation}
where in the first line either $\theta\in[0,\pi]$, or $\theta=i\tau \log p$ or $\pi=i\tau\log p$ with $\tau \in (0,1/2)$.
If $f_p$ is a newform projector (resp. $f_p=\nu(p^c)1_{K_0(p^c)}$) then
\begin{equation}\label{def:local-weights}
    \delta_p= \int_{\mathcal{F}_p(f)}\frac{1}{\mathcal{L}_\pi(1)}d\widehat{\mu}(\pi) \qquad \left(\text{resp. } \delta_p = \int_{\mathcal{F}_p(f)}\text{dim}\pi ^{K_0(p^c)}d\widehat{\mu}(\pi) \right)
\end{equation}
where $\mathcal{F}_p(f)=\{\pi\in G(\mathbb{Q}_p)^\wedge:\pi(f):V_\pi \to V_\pi \text{ is not zero}\}$ is the local family cut by $f$.

\section{An overview of the relative trace approach}
The starting point of the adelic relative trace approach is the pre-trace formula, which allows us to obtain a spectral expansion of the kernel operator attached to a well behaved function on $G(\mathbb{A})$. We begin this chapter by defining the main terms on the geometric and spectral side of the pre-trace formula. We set up the main ideas to prove our cases of interest by providing an outline of the proof for the Bruggeman-Kuznetsov formula in the same sign case. Working formally for the moment, we will discuss the refinement technique. We also address the derivation of the Petersson formula for weight $\kappa>2$. 

Both discussions follow the work of Knightly and Li. Brief remarks on the changes we need to introduce are provided throughout, but we develop the additional theory with more rigour in Chapter \ref{section:opposite-signs} and Chapter \ref{section:holomorphic}.

\subsection{The pre-trace formula}
Let $f_\infty \in {C}_c^\infty(G(\mathbb{R}))$, the space of smooth, compactly supported functions on $G(\mathbb{R})$. For $f \in \mathcal{H}$, which we assume satisfies the same conditions of Theorem
\ref{thm:refined-opposite-signs}, the global function
$f_\mathbb{A}=f_\infty \otimes f$ lies in the Schwartz space $C_c^\infty(G(\mathbb{R}))\otimes \mathcal{H}$.
Any such global function admits a geometric kernel
\begin{equation}\label{def:geometrickernel}
    K_\text{geom}(x,y)=\sum_{\gamma \in G(\mathbb{Q})}f_\mathbb{A}(x^{-1}\gamma y) \quad ((x,y) \in G(\mathbb{A})\times G(\mathbb{A}) )
\end{equation}
where the sum is absolutely and uniformly convergent on compacta of $[G]^2$ (it is in fact a locally finite sum).

The pre-trace formula provides an alternative form of $K_\text{geom}$ via a spectral expansion. Given a cuspidal representation $\pi$ of $G(\mathbb{A})$, let $\mathcal{B}(\pi)$ any orthonormal basis consisting of $K_\infty$ isotypic vectors that respect the decomposition $\pi^{K(N)}\times (\pi^{K(N)})^\bot$. Similarly, for $\chi$ a Hecke character, denote by $\mathcal{B}(\chi,\chi^{-1})$ any orthonormal basis for $\pi_{\chi,\chi^{-1}}$ that respects the same two conditions.

\begin{theorem}[Pre-trace formula]\label{thm:pretrace}
    For any $f_\mathbb{A} \in C_c^\infty(G(\mathbb{R}))\otimes \mathcal{H}$ and $(x,y) \in G(\mathbb{A})\times G(\mathbb{A})$, we have
    \begin{equation}
        K_{\operatorname{geom}}(x,y)=K_{\operatorname{cusp}}(x,y)+K_{\operatorname{res}}(x,y)+K_{\operatorname{cts}}(x,y)
    \end{equation}
    where $K_{\operatorname{geom}}$ is defined as in (\ref{def:geometrickernel}), 
    $$K_{\operatorname{cusp}}(x,y)=\sum_{\pi \ \text{cuspidal}}\sum_{\varphi \in \mathcal{B}(\pi)}R(f_\mathbb{A})\varphi(x)\overline{\varphi(y)}$$
    where  each $\pi$ in the summand has trivial central character,
    $$K_{\operatorname{res}}(x,y)=\frac{3}{\pi}\sum_{\chi^2=1}\chi(\det x)\overline{\chi(\det y)}\int_{G(\mathbb{A})}f_\mathbb{A}(g)\chi(\det g)dg$$
    where sum ranges over quadratic Hecke characters, and
    $$K_{\operatorname{cts}}(x,y)=\frac{1}{4\pi}\sum_{\chi}\sum_{\phi \in \mathcal{B}(\chi,\chi^{-1})}\int_{-\infty}^{\infty}E(\pi_{it}(f_\mathbb{A})\phi_{it},x)\overline{E(\phi_{it},y)}dt$$
    where the outer sums ranges over the finite order Hecke characters. The right hand side converges absolutely and uniformly on compacta of $[G]^2$. 
\end{theorem}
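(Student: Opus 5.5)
The plan is the standard spectral decomposition argument; the pre-trace formula is the statement that the integral kernel of $R(f_\mathbb{A})$ on $L^2([G])$ has two expressions. First, $R(f_\mathbb{A})\Phi(x)=\int_{G(\mathbb{A})}f_\mathbb{A}(g)\Phi(xg)\,dg$. Unfolding over $G(\mathbb{Q})$ gives
\begin{equation*}
R(f_\mathbb{A})\Phi(x)=\int_{[G]}K_{\operatorname{geom}}(x,y)\Phi(y)\,dy .
\end{equation*}
Since $f_\mathbb{A}$ is compactly supported modulo the centre (we are in $\mathrm{PGL}_2$) and smooth, the sum defining $K_{\operatorname{geom}}$ is locally finite. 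Hence it is smooth and uniformly bounded on compacta of $[G]^2$, and $R(f_\mathbb{A})$ is an integral operator with this kernel.

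Second, I will use the orthogonal decomposition $L^2([G])=L^2_0\oplus L^2_{\rm res}\oplus L^2_{\rm cts}$ of \S2.2.2, which commutes with $R(f_\mathbb{A})$, and compute the kernel of each piece.

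\emph{Cuspidal part.} $L_0^2$ is a discrete Hilbert sum of cuspidal $\pi$. I will expand $K_{\operatorname{geom}}(x,\cdot)$ in the orthonormal basis $\bigcup_\pi\mathcal{B}(\pi)$. Its coefficient is $\langle K(x,\cdot),\varphi\rangle=R(f_\mathbb{A})\varphi(x)$, which, using $\overline{K(x,y)}=K^*(y,x)$, produces the term $\sum R(f_\mathbb{A})\varphi(x)\overline{\varphi(y)}$.

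\emph{Residual part.} I will project onto the one-dimensional spaces $\mathbb{C}\phi_\chi$. This gives $\frac{1}{\mathrm{vol}[G]}\,R(f_\mathbb{A})\phi_\chi(x)\overline{\phi_\chi(y)}$, with
\begin{equation*}
R(f_\mathbb{A})\phi_\chi=\Bigl(\int f_\mathbb{A}(g)\chi(\det g)\,dg\Bigr)\phi_\chi .
\end{equation*}
With the measures of \S1.4.3 one has $\mathrm{vol}([G])=\pi/3$, which yields the constant $3/\pi$.

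\emph{Continuous part.} I will use Langlands' spectral theorem in the form that the map $\phi\mapsto E(\phi_{it},\cdot)$ extends to an isometry from $\bigoplus_\chi\int\pi_{it}(\chi)\,\frac{dt}{4\pi}$ onto $L^2_{\rm cts}$. The factor $1/4\pi$ absorbs the identification $\chi\leftrightarrow\chi^{-1}$, the normalisation of $dt$, and the volume of $\mathbb{A}^1/\mathbb{Q}^\times$. Moreover, $R(f_\mathbb{A})E(\phi_{it})=E(\pi_{it}(f_\mathbb{A})\phi_{it})$ by equivariance, so the same Parseval argument gives $K_{\operatorname{cts}}$. At this stage the identity $K_{\operatorname{geom}}=K_{\operatorname{cusp}}+K_{\operatorname{res}}+K_{\operatorname{cts}}$ holds in $L^2([G]^2)$, i.e.\ almost everywhere.

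The main obstacle is upgrading this to pointwise identity with absolute and uniform convergence on compacta. I would first reduce to $f_\mathbb{A}=f_1*f_2^*$ with $f_i\in C_c^\infty$. Every test function in our class is a finite sum of such convolutions, by Dixmier--Malliavin at $\infty$ and trivially at the finite places, where one can use $1_{K(N)}$ and bi-$K(N)$-invariance. For such a product, Cauchy--Schwarz bounds the spectral sum by
\begin{equation*}
\Bigl(\sum|R(f_1)\varphi(x)|^2\Bigr)^{1/2}\Bigl(\sum|R(f_2)\varphi(y)|^2\Bigr)^{1/2},
\end{equation*}
together with the analogous integrals in the continuous case. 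Each factor is $\le K_{f_i*f_i^*}(x,x)$ by Bessel's inequality applied to the function $y\mapsto\overline{f_i}$-kernel, and it is continuous in $x$. This gives absolute, locally uniform convergence and hence a continuous sum. Two continuous functions that agree almost everywhere agree everywhere. Finally, Dini's theorem applied to the diagonal sums of nonnegative continuous terms, whose limit is continuous, upgrades locally uniform convergence on the diagonal to uniform convergence on compacta. Only $K_\infty\times K(N)$-finite vectors contribute, which justifies the choice of bases in the statement.
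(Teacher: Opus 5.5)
The paper gives no argument of its own here; it only cites \cite[Theorem 2.2]{LuoPiWu} and \cite[Corollary 6.12]{KL-kuznetsov}. You instead sketch the standard self-contained proof: a fibrewise spectral expansion of the kernel, then Dixmier--Malliavin, Cauchy--Schwarz against the diagonal values $K_{f_i*f_i^*}(x,x)$, and Dini. That is a legitimate alternative, and it also explains where the constants come from; your $3/\pi=\operatorname{vol}([G])^{-1}$ is right. As written, however, two steps do not go through.

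\textbf{The kernel is not in $L^2([G]^2)$.} The identity cannot hold ``in $L^2([G]^2)$'', because in general $K_{\operatorname{geom}}\notin L^2([G]^2)$. Whenever $R(f_\mathbb{A})$ acts non-trivially on $L^2_{\operatorname{cts}}([G])$, as it typically does, it is a non-zero decomposable operator over a non-atomic direct integral. Such an operator is not compact, hence not Hilbert--Schmidt. What is true, and what your cuspidal step actually uses, is that for fixed $x$ the function $y\mapsto K_{\operatorname{geom}}(x,y)$ is bounded and supported on a compact subset of $[G]$. So the expansion holds for each $x$ and almost every $y$, and continuity in $y$ finishes the argument.

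\textbf{The Cauchy--Schwarz step bounds the wrong series.} Cauchy--Schwarz controls $\sum_\varphi R(f_1)\varphi(x)\overline{R(f_2)\varphi(y)}$, not the series $\sum_\varphi R(f_1*f_2^*)\varphi(x)\overline{\varphi(y)}$ in the statement. These two series have different terms. You must show that, for each $\pi$ (and each $\pi_{it}(\chi)$), the inner sums over $\mathcal{B}(\pi)$ agree. Here is one way:
\begin{itemize}
\item Write $R(f_i)u(x)=\langle u,e^{(i)}_x\rangle$ for $u\in\pi$; this is Riesz representation, valid since $K_{f_i}(x,\cdot)\in L^2$.
\item Then $R(f)u(x)=\langle u,\pi(f_2)e^{(1)}_x\rangle$.
\item Parseval in $\pi$ shows that both inner sums equal $\langle e^{(2)}_y,e^{(1)}_x\rangle$.
\end{itemize}
For the original series this needs pointwise convergence of the $\mathcal{B}(\pi)$-expansion of the smooth vector $\pi(f_2)e^{(1)}_x$. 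That is automatic when $f_\infty$ is $K_\infty$-finite, as in all applications in the paper, since the inner sum is then finite.

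This gives absolute, locally uniform convergence of the sum over $\pi$ (and over $\chi$, $t$) of the inner sums, which is what the later sections use. Termwise absolute convergence in $\varphi$, as the statement literally asserts, needs one more input. For example, combine rapid decay of $\|\pi(f_2)\|$ in the Casimir eigenvalue with a local Weyl-law bound for $\sum_\varphi|\varphi(y)|^2$.

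\textbf{A smaller point on Dini.} Dini's theorem needs the limit of the diagonal sums to be continuous. That requires Plancherel \emph{equality} over the full spectrum (cuspidal, residual and continuous together), not just Bessel's inequality. So apply Dini to the total diagonal sum rather than to each piece separately.
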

\begin{proof}
    See \cite[Theorem 2.2]{LuoPiWu}. See also \cite[Corollary 6.12]{KL-kuznetsov}.
\end{proof}

\subsection{The BK formula in the same signs case}\label{section:same-sign}
In this section we sketch the derivation of the BK formula in the case that $m_1,m_2$ are both positive. The case in which $m_1,m_2$ are both negative follows similarly, with only very minor modifications, therefore we choose to omit it. We assume throughout that $f \in \mathcal{H}$ satisfies the conditions in Theorem \ref{thm:unrefinedBKformula}. For the archimedean test function we make use of the following proposition.

\begin{proposition}\label{prop:paley-wiener-isom}
    Let $C^\infty_c(G^+(\mathbb{R})//K_\infty)$ be the space of smooth, compactly supported functions with support in $G^+(\mathbb{R})$, which are also bi-$K_\infty$-invariant. There is a linear isomorphism
    $$ C_c^\infty(G^+(\mathbb{R})//K_\infty) \longrightarrow PW^{\operatorname{even}}(\mathbb{C}) \qquad f_\infty \longmapsto h_\infty$$
    where $PW^{\operatorname{even}}(\mathbb{C})$ is the even Paley-Wiener space as in \cite[\S 3.3]{KL-kuznetsov}.
\end{proposition}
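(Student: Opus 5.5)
The plan is to build the isomorphism as a composition of two classical maps: the Abel (Harish-Chandra) transform and the Fourier transform. Because $f_\infty$ is supported in $G^+(\mathbb{R})$ and is bi-$K_\infty$-invariant, it is the same thing as a smooth, compactly supported, point-pair invariant function on $\mathbb{H}\simeq G^+(\mathbb{R})/K_\infty$. Concretely, $f_\infty(g)=\Phi(u(g\cdot i,i))$ with $u(z,w)=\frac{|z-w|^2}{4\operatorname{Im}z\operatorname{Im}w}$ and $\Phi\in C_c^\infty([0,\infty))$, and we set $f_\infty=0$ off $G^+(\mathbb{R})$. The map $f_\infty\mapsto\Phi$ is a linear bijection onto $C_c^\infty([0,\infty))$.

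The spherical transform is then defined by
\[
h_\infty(t)=\int_{G^+(\mathbb{R})}f_\infty(g)\,\phi_{it}(g)\,dg ,
\]
where $\phi_{it}$ is the weight-zero vector of the principal series with parameter $t$. Using the Iwasawa coordinates and the Haar measure normalisations of \S1.4.3, this factors into three steps.

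\begin{enumerate}
\item Apply the Abel transform $Q(v)=\int_{\mathbb{R}}\Phi(v+x^2/4)\,dx$, or equivalently its version in the variable $y=e^{r}$. Set $g(r)=Q(\sinh^2(r/2))$. This $g$ is even, smooth and compactly supported.
\item Apply the Fourier transform: $h_\infty(t)=\int_{\mathbb{R}}g(r)e^{irt}\,dr$.
\item Check that $h_\infty$ has the stated properties. The spherical integral is invariant under $t\mapsto -t$, so $h_\infty$ is even. By the classical Paley--Wiener theorem, $h_\infty$ is entire of exponential type $R$ and rapidly decreasing on horizontal strips, where $\operatorname{supp} g\subset[-R,R]$. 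This is exactly the space $PW^{\operatorname{even}}(\mathbb{C})$ of \cite[\S 3.3]{KL-kuznetsov}.
\end{enumerate}

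Linearity is clear. Injectivity and surjectivity reduce to three facts.

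\begin{enumerate}
\item The Fourier transform is a bijection from $C_c^\infty(\mathbb{R})^{\mathrm{even}}$ onto $PW^{\mathrm{even}}$. This is the classical Paley--Wiener theorem, with evenness preserved.
\item $v\mapsto\sinh^2(r/2)$ is a diffeomorphism from $r\ge0$ onto $[0,\infty)$. Even smooth functions of $r$ correspond to smooth functions of $r^2$ by Whitney's theorem, and hence to smooth functions of $v$.
\item The Abel transform $\Phi\mapsto Q$ is a bijection of $C_c^\infty([0,\infty))$ onto itself.
\end{enumerate}

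For the third fact we use the explicit inversion
\[
\Phi(u)=-\frac{1}{\pi}\int_u^\infty \frac{Q'(v)}{\sqrt{v-u}}\,dv .
\]
This inverse preserves compact support, because $Q$ vanishing for $v>R'$ forces $\Phi$ to vanish there. It also preserves smoothness: after the substitution $v=u+w^2$ both integral formulas can be differentiated under the integral sign.

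The main obstacle is the boundary behaviour in the second and third facts, namely smoothness at $v=0$ (the identity coset) and the compatibility of normalisations with \S1.4.3. For the analytic side I will cite \cite[Theorem 3.7 and \S 3.3]{KL-kuznetsov}, which does exactly this for $\mathrm{GL}_2(\mathbb{R})^+$ modulo centre. The remaining work is the bookkeeping that our $G=\mathrm{PGL}_2$ setting with support in $G^+$ matches theirs: functions on $\mathrm{PGL}_2^+(\mathbb{R})$ are the same as functions on $\mathrm{GL}_2^+(\mathbb{R})/Z$. I will also record the inversion formula
\[
f_\infty(1)=\frac{1}{4\pi}\int_{\mathbb{R}} h_\infty(t)\,t\tanh(\pi t)\,dt ,
\]
since it is used later in the paper.
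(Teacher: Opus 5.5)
Your proposal is correct and follows essentially the paper's route: the paper's proof is just the citation \cite[Proposition 3.6]{KL-kuznetsov}, and your factorisation $f_\infty\mapsto\Phi\mapsto Q\mapsto g\mapsto h_\infty$ through the Abel and Fourier transforms is exactly the standard argument behind that citation, with normalisations consistent with the paper's (with $\frac{dx\,dy}{y^2}\,dk$ one gets $h_\infty(t)=\int_{\mathbb{R}}Q(\sinh^2(r/2))e^{irt}\,dr$ and the stated formula for $f_\infty(1)$). Two harmless slips: with your normalisation $Q(v)=\int_{\mathbb{R}}\Phi(v+x^2/4)\,dx$ the Abel inversion constant is $-\frac{1}{2\pi}$, not $-\frac{1}{\pi}$; and $r\mapsto\sinh^2(r/2)$ is not a diffeomorphism at $r=0$ --- what actually handles that point is your Whitney step, since $s\mapsto\sinh^2(\sqrt{s}/2)$ is a genuine diffeomorphism of $[0,\infty)$.
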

\begin{proof}
    See \cite[Proposition 3.6]{KL-kuznetsov}.
\end{proof}

Let $K_{\operatorname{geom}}$ be the kernel associated to $f_\infty\otimes f$ with $f_\infty\in C_c^\infty(G^+(\mathbb{R})//K_\infty) $. For $m_1,m_2 \in \mathbb{Q}_+$ and $y_1,y_2 >0$, we aim to evaluate 
\begin{equation}
    I = \frac{1}{\sqrt{y_1y_2}}\iint_{[N]^2}K_\text{geom}(\begin{psmallmatrix}
        y_1 & 0 \\ 0 & 1
    \end{psmallmatrix}n_1,\begin{psmallmatrix}
        y_2 & 0 \\ 0 & 1
    \end{psmallmatrix}n_2)\overline{\theta_{m_1}(n_1)}\theta_{m_2}(n_2)dn_1dn_2
\end{equation}
in two ways, following Theorem \ref{thm:pretrace}. Since $[N]^2$ is compact, we may apply Fubini's theorem to exchange integration with the sums that define $K_\text{cusp}, K_\text{res}$ and $K_\text{cts}$, resulting in a decomposition of the form
\begin{equation}\label{eq:decomposition}
    I = I_\text{cusp}+I_\text{res}+I_\text{cts}
\end{equation}

\noindent \emph{3.2.1 Cuspidal contribution:} Swapping the order of summation and integration 
$$I_\text{cusp}=\frac{1}{\sqrt{y_1y_2}}\sum_{\pi \in \mathcal{F}_0(f)}h_\infty(t_\pi)\sum_{\varphi \in \mathcal{B}(\pi)}W_{\pi(f)\varphi}(a(-m_1y_1))\overline{W_\varphi(a(-m_2y_2))}$$
where we recall all $\pi$ in the summation are cuspidal and with trivial central character. Since $f$ is bi-$K(N)$-invariant, the vector $\pi(f)\varphi \in \pi^{K_\infty \times K(N)}$ and thus admits a description as a Maass form $u_{\pi(f)\varphi}$.
Therefore if $m_1,m_2 \in \tfrac{1}{N}\mathbb{Z}$ then
\begin{equation}\label{eq:cusp-unrefined-samesigns}
        I_\text{cusp}=\frac{4}{\pi}\sum_{\pi \in \mathcal{F}_0(f)}h_\infty(t_\pi)(\cosh (\pi t_\pi))K_{it_\pi}(2\pi m_1y_1)K_{it_\pi}(2\pi m_2 y_2) \sum_{\varphi \in \mathcal{B}(\pi)}a_{u_{\pi(f)\varphi}}(m_1)\overline{a_{u_\varphi}(m_2)}  
\end{equation}
Upon setting the constraint $w=m_1y_1 = m_2y_2$ and integrating with respect to $w$, equation \eqref{eq:cusp-unrefined-samesigns} becomes
\begin{equation}\label{eq:cusp-refined-samesigns}
    \int_0^\infty I_\text{cusp}(w)dw = \frac{1}{2}\sum_{\pi \in \mathcal{F}_0(f)}h_\infty(t_\pi)\sum_{\varphi \in \mathcal{B}(\pi)}a_{u_{\pi(f)\varphi}}(m_1) \overline{a_{u_\varphi}(m_2)}
\end{equation}
Note that if $m_1,m_2$ had opposite signs, equation \eqref{eq:cusp-refined-samesigns} would be the same except for a correction via the parity $p(\pi)$. \bigbreak 

\noindent \emph{3.2.2 Residual contribution:} By unfolding the definition of Theorem \ref{thm:pretrace}, we obtain
$$I_\text{res}=\frac{1}{\sqrt{y_1y_2}}\frac{3}{\pi}\sum_{\chi^2=1}\chi(y_1)\overline{\chi(y_2)}\int_{\overline{G}(\mathbb{A})}f_{\mathbb{A}}(g)\chi(\det g)dg \int_{\mathbb{Q}\backslash \mathbb{A}}\overline{\theta_{m_1}(n_1)}dn_1 \int_{\mathbb{Q}\backslash \mathbb{A}}\theta_{m_2}(n_2)dn_2$$
and since $m_1, m_2$ are non-zero (their sign is irrelevant), the last two integrals identically vanish. \bigbreak 

\noindent \emph{3.2.3 Continuous contribution:} Recall the definition of the continuous family $\mathcal{F}_E(f)$ in \eqref{def:continuous-family},
an analogous computation as in the cuspidal term gives
\begin{equation}
    \int_{0}^{\infty}I_\text{cts}(w)dw=\frac{1}{8\pi}\sum_{\chi \in \mathcal{F}_E(f)}\sum_{\phi \in \mathcal{B}(\chi,\chi^{-1})}\int_{-\infty}^{\infty}h_\infty(t)a_{u_{E(\pi_{it}(\chi)(f)\phi_{it})}}
(m_1)\overline{a_{u_{E(\phi_{it})}}(m_2)}dt\end{equation}
Again if $m_1,m_2$ have opposite signs, the only change is in the form of the parity of the archimedean component of $\pi_{\chi|\cdot|^{it},\chi^{-1}|\cdot|^{-it}}$.
\bigbreak 

\noindent \emph{3.2.4 The geometric side}: Let $H=N\times N$, and let $(x,y)\in H(\mathbb{Q})$ act on $\gamma \in G(\mathbb{Q})$ via $(x,y)\cdot \gamma = x^{-1}\gamma y$. Given an orbit representative $\delta \in G(\mathbb{Q})$ for this action, we may define the orbital integrals $I_\delta(f_\mathbb{A})$ by
\begin{equation}\label{eq:orbit-geom-integral}
    I_\delta(f_\mathbb{A})=\frac{1}{\sqrt{y_1y_2}}\int_{H_\delta(\mathbb{Q})\backslash H(\mathbb{A})}f_\mathbb{A}(\begin{psmallmatrix}
        y_1 & x_1 \\ 0 & 1
    \end{psmallmatrix}^{-1}\delta\begin{psmallmatrix}
        y_2 & x_2 \\ 0 & 1
    \end{psmallmatrix})\overline{\theta_{m_1}(x_1)}\theta_{m_2}(x_2)dx_1 dx_2
\end{equation}
where $H_\delta(\mathbb{Q})$ is the stabiliser of $\delta$ in $H(\mathbb{Q})$. 

Following \cite{KL-petersson}, and the absolute convergence of $K_\text{geom}(x,y)$ on compacta, we may rewrite
\begin{equation}
    I = \sum_{[\delta]\ \text{relevant}} I_\delta(f_\mathbb{A})
\end{equation}
where the orbit $[\delta]$ is relevant if it has a representative in 
\begin{equation}\label{def:cells}
    \left\{ \begin{psmallmatrix}
        m_2/m_1 & 0 \\ 0 & 1
    \end{psmallmatrix}  \right\}\cup \left\{ \begin{psmallmatrix}
        0 & -\mu \\ 1 & 0
    \end{psmallmatrix} \bigr| \mu \in\mathbb{Q}^\times\right\}
\end{equation}
We call the term on the left  \emph{first cell term}, and the terms on the right the \emph{second cell terms}. We will not discuss the first cell term, since in the opposite sign case it provides no contribution by the support of $f_\infty$, but we remark that in the same sign case it provides the diagonal term. For details, see (\cite{KL-kuznetsov}, Proposition 7.9). We focus instead on the second cell terms. The orbital integrals factor in an archimedean and non-archimedean part, say $I_\delta(f_\mathbb{A})=I_\delta(f_\infty)I_\delta(f)$. We may write
\begin{equation}\label{eq:unrefined_geometric}
    I_\delta(f_\mathbb{A})=\frac{I_\delta(f)}{\sqrt{y_1y_2}}\iint_{\mathbb{R}^2}k(z_1,-\mu/z_2)e(m_2x_2-m_1x_1)dx_1dx_2
\end{equation}
where 
\begin{equation}\label{def:pair-point-kernel}
    k(z_1,z_2)=f_\infty(g_1^{-1}g_2) \qquad g_j\cdot i = z_j
\end{equation}
is a pair point invariant kernel depending only on $u(z_1,z_2):=|z_1-z_2|^2/(4 \ \text{Im}z_1\  \text{Im}z_2)$, so we shall often write $k(u(z_1,z_2))$ instead. We define the Zagier transform $\mathcal{Z}(t)$ of $k(z_1,z_2)$ by
\begin{equation}
    \mathcal{Z}(t)=\iint_{\mathbb{H}}k(z+t,-\frac{1}{z})\ y dz
\end{equation}
where $dz$ is the standard area element in the upper half plane. Imposing again the constraint $w=m_1y_1=m_2y_2$, evaluating the integral with respect to $w$ essentially reduces to computing the Fourier transform of $\mathcal{Z}$, that is
\begin{equation}
    \int_{0}^{\infty}I_\delta(f_\mathbb{A},w)dw=\sqrt{\mu m_1m_2} \ I_\delta(f)\int_{-\infty}^{\infty}\mathcal{Z}(t)e(-\sqrt{\mu m_1 m_2}\ t)dt  
\end{equation}
The computations can be found in \cite[\S 7.5]{KL-kuznetsov} and \cite[\S 2.2.7]{generalised}. As for $I_\delta(f)$, from the support of the function $f$ it follows that $\mu \in \mathbb{Z}_p^\times (\mathbb{Q}_p^\times)^2$. Since this holds for all $p$, the integral $I_\delta(f)=0$ unless there is $c \in \mathbb{Q_+}$ such that
\begin{equation}\label{eq:parametrisation}
    \mu = 1/c^2,
\end{equation}
in which case we recognise as the generalised Kloosterman sums $H(m_1,m_2,c)$ defined in \eqref{def:kloosterman-sum}.

We shall address the geometric side of the opposite sign case by introducing the modified Zagier transform
\begin{equation}\label{def:modified-zagier}
    \mathcal{M}(t)=\iint_{\mathbb{H}}k(z+t,\frac{1}{\overline{z}})ydz
\end{equation}
This transform shares several properties with $\mathcal{Z}(t)$, including being compactly supported and an even function. 

\subsection{Holomorphic variation}\label{section:holomrophic-variation} Fix $\kappa > 2$. Choosing $f_\infty$ as defined in \eqref{def:matrix-coeff}, we consider a variation of the $I$ given in \eqref{eq:decomposition}. More precisely, for
\begin{equation}
    I= \iint_{[N]^2}K_\text{geom}(n_1,n_2)\overline{\theta_{m_1}(n_1)}\theta_{m_2}(n_2)dn_1 dn_2
\end{equation} we obtain a simplified version of \eqref{eq:decomposition}, namely
\begin{equation}\label{eq:I-holomorphic}
    I = I_\text{cusp}
\end{equation}
where, in this setting,
\begin{align}
    I_{\operatorname{cusp}} :=\sum_{\pi \in \mathcal{F}_\kappa(f)}\sum_{\varphi \in \mathcal{B}(\pi)}W_{\pi(f)\varphi}(a(-m_1))\overline{W_\varphi(a(-m_2))} 
\end{align}
since the other terms vanish because $R(f_\mathbb{A})$ annihilates orthocomplement of the $L_0^2([G])$. 
We remark again that for $\kappa > 2$, the geometric kernel $K_\text{geom}(x,y)$ is still absolutely convergent, but this is not so for $\kappa=2$, which is the main difficulty in this problem. \bigbreak

Let $m_1,m_2 \in \tfrac{1}{N}\mathbb{N}$. Unfolding the definition of $I$ and using the relation \eqref{eq:whittaker-fourier} between the Whittaker and the Fourier coefficients, we obtain
\begin{equation}
    I_\text{cusp}=\frac{(4\pi)^\kappa}{\Gamma(k)}(m_1m_2)^{(\kappa-1)/2}e^{-2\pi(m_1+m_2)}\sum_{\pi \in \mathcal{F}_\kappa(\pi)}\sum_{\varphi \in \mathcal{B}(\pi)}a_{u_{\pi(f)\varphi}}(m_1)\overline{{a_{u_\varphi}(m_2)}}
\end{equation}
and vanishes otherwise. The geometric side admits the same decomposition in terms of orbital integrals $I_\delta(f_\mathbb{A})$. Using \cite[Propositions 3.4 and 3.6]{KL-petersson} for the archimedean aspect, if $m_1, m_2>0$ then
\begin{equation*}
    I_{\begin{psmallmatrix}
        m_2/m_1 & 0 \\ 0 & 1
    \end{psmallmatrix}}=\delta_{m_1=m_2 \in \tfrac{1}{N}\mathbb{N} }\frac{(4\pi \sqrt{m_1 m_2})^{\kappa-1}}{\Gamma(\kappa-1)} e^{-2\pi(m_1+m_2)}\int_{\mathbb{A}_\text{fin}}f(\begin{psmallmatrix}
        1 & t \\ 0 & 1
    \end{psmallmatrix})\theta_{\text{fin}}(-mt)dt
\end{equation*}
with  $m$ being the value of $m_1$ or $m_2$ in the case that $m_1=m_2$. Similarly
\begin{equation*}
    I_{\begin{psmallmatrix}
        0 & -\mu \\ 1 & 0
    \end{psmallmatrix}}(f_\mathbb{A})=\frac{(4\pi i)^{\kappa}(\sqrt{m_1 m_2})^{\kappa-1}e^{-2\pi(m_1+m_2)}}{2\Gamma(\kappa-1)}\frac{H(m_1,m_2,c)}{c} J_{\kappa-1}\left( \frac{4\pi\sqrt{m_1m_2}}{c} \right)
\end{equation*}
when $\mu = c^{-2}$ for $c \in \mathbb{Q}_+$, and vanishes otherwise. We may thus equate the geometric and spectral side to obtain the Petersson trace formula.

\section{Opposite signs}\label{section:opposite-signs}
In this section we prove both the refined and unrefined Bruggeman-Kuznetsov formulas, i.e Theorem \ref{thm:unrefinedBKformula} and Theorem \ref{thm:refined-opposite-signs} in the case of opposite sign case, as well as Corollary \ref{cor:equidistribution}. For purely notational practicality, we consider $m_1, m_2 \in \tfrac{1}{N}\mathbb{N}$, and compute 
\begin{equation}
    I = \frac{1}{\sqrt{y_1y_2}}\iint_{[N]^2}K_\text{geom}(n_1\begin{psmallmatrix}
        y_1 & 0 \\ 0 & 1
    \end{psmallmatrix},n_2\begin{psmallmatrix}
        y_2 & 0 \\ 0 & 1
    \end{psmallmatrix})\overline{\theta_{-m_1}(n_1)}\theta_{m_2}(n_2) dn_1dn_2
\end{equation}
in the two ways given by the pre-trace formula \eqref{eq:I-holomorphic}. Once again $K_{\operatorname{geom}}$ is the kernel attached to the test function $f_\infty\otimes f$ where $f_\infty \in C_c^\infty(G^+(\mathbb{R})//K_\infty)$ and $f \in \mathcal{H}$ satisfies the conditions of Theorem \ref{thm:unrefinedBKformula} (and later of Theorem \ref{thm:refined-opposite-signs}). The discussion of the spectral side follows very closely to that in Section \ref{section:same-sign}, and thus we shall simply recall those results. Likewise, we quickly deal with the first cell terms. The bulk of the analysis will focus of expressing the Fourier transform of $\mathcal{M}(t)$, defined in \eqref{def:modified-zagier}, in terms of $h_\infty$ to recover $H_\infty^-$. We start with the case that $h_\infty$ is of Paley-Wiener type, and later appeal to \cite[\S 8]{KL-kuznetsov} for extending the class of test functions to \eqref{def:testfunction-conditions}. Of course the case with $m_1>0$ and $m_2<0$ is derived exactly the same way.

\subsection{Overview of the Spectral side}\label{section:spectral-overview-opposite-signs}
\emph{4.1.1 Cuspidal contribution}: With the same notation as in \eqref{eq:decomposition}, given $y_1,y_2>0$,
\begin{align}\label{eq:cusp-unrefined-oppositesigns}
        I_\text{cusp}=\frac{4}{\pi}\sum_{\pi \in \mathcal{F}_0(f)}p(\pi)h_\infty(t_\pi)(\cosh (\pi t_\pi))& K_{it_\pi}(2\pi m_1y_1)K_{it_\pi}(2\pi m_2 y_2)\notag  \\
        & \times\sum_{\varphi \in \mathcal{B}(\pi)}a_{u_{\pi(f)\varphi}}(m_1)\overline{a_{u_\varphi}(m_2)}  
\end{align}
where the factor $p(\pi)$ arises from the identity $a_{u_{\pi(f)\varphi}}(-m_1)=p(\pi)a_{u_{\pi(f)\varphi}}(m_1)$.
Setting $w=m_1y_1=m_2y_2$ and denoting $I_\text{cusp}(w)$ the formula with this constraint becomes
\begin{equation}\label{eq:opposite-sign-cusp-refined}
    \int_{0}^{\infty}I_\text{cusp}(w)dw=\frac{1}{2}\sum_{\pi \in \mathcal{F}_0(f)}p(\pi)h_\infty(t_\pi)\sum_{\varphi \in\mathcal{B}(\pi)}a_{u_{\pi(f)\varphi}}(m_1)\overline{a_{u_{\varphi}}(m_2)}
\end{equation} \bigbreak

\noindent \emph{4.1.2 Continuous contribution}: By an analogous argument, using the same constraint in terms of $w$, the unrefined and refined continuous contributions are, respectively,
\begin{align}\label{eq:unrefined-cts}
    I_\text{cts}=\frac{1}{4\pi}\sum_{\chi \in \mathcal{F}_E(f)}p(\pi_{\chi,\chi^{-1}})& \sum_{\phi \in \mathcal{B}(\chi,\chi^{-1})} \int_{-\infty}^{\infty}{h}_\infty
(t)\cosh(\pi t)\notag \\ &\times K_{it}(2\pi m_1y_1)K_{it}(2\pi m_2y_2)a_{E(\pi_{it}(\chi)(f)\phi_{it})}(m_1)\overline{a_{E(\phi_{it})}(m_2)}dt\end{align}
and
\begin{equation}\label{eq:opposite-sign-cts-refined}
    \int_{0}^{\infty}I_{\text{cts}}(w)dw = \frac{1}{8\pi}\sum_{\chi \in \mathcal{F}_E(f)}p(\pi_{\chi,\chi^{-1}})\int_{-\infty}^{\infty}h_\infty(t)w(\pi_{it}(\chi),f)\lambda_{\pi_{it}(\chi)}(m_1)\overline{\lambda_{\pi_{it}(\chi)}(m_2)}dt
\end{equation} \bigbreak

\noindent \emph{4.1.3 Residual contribution}: We may rewrite $I_\text{res}$ as in Section 3.2.2, with the only difference being that $m_1$ is replaced with $-m_1$. Since $-m_1,m_2$ are non-zero, the integral vanishes identically. \bigbreak

We remark that all the sums and integrals in Section \ref{section:spectral-overview-opposite-signs} converge absolutely. In the cases \eqref{eq:cusp-unrefined-oppositesigns} and \eqref{eq:unrefined-cts}, this is simply a consequence of Theorem \ref{thm:pretrace}, and in the refined cases \eqref{eq:opposite-sign-cusp-refined} and \eqref{eq:opposite-sign-cts-refined}, we refer to \cite[\S 7.1 to 7.4]{KL-kuznetsov}, applying suitable changes where necessary.

\subsection{The modified Zagier transform}
Our goal in this section is proving the following proposition concerning the modified Zagier transform

\begin{proposition}\label{prop:DtoHtransform}
    Let $h_\infty\in PW^{\operatorname{even}}(\mathbb{C})$ related to $f_\infty \in C_c^\infty(G^+(\mathbb{R})//K_\infty)$ via Proposition \ref{prop:paley-wiener-isom}. For $a>0$ we have
    \begin{equation}
        \widehat{\mathcal{M}}(a)=\frac{1}{2\pi a}\int_{-\infty}^{\infty}K_{2it}(4\pi a)\sinh(\pi t)t \ h_\infty(t)dt
    \end{equation}
    where $\widehat{\mathcal{M}}$ denotes the Fourier transform of $\mathcal{M}$ given in \eqref{def:modified-zagier}.
\end{proposition}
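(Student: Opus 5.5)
The plan is to follow the structure of the Knightly--Li computation of the Fourier transform of the Zagier transform \cite[\S 7.5]{KL-kuznetsov}, adapted to the reflected point $1/\overline{z}$. Write $z=x+iy$, so that $1/\overline{z} = z/|z|^2 = (x+iy)/(x^2+y^2)$. We then compute
\begin{equation*}
u\Bigl(z+t,\frac{1}{\overline{z}}\Bigr)=\frac{\bigl|z+t-z/|z|^2\bigr|^2}{4y^2/|z|^2}.
\end{equation*}
The first step is a change of variables that linearises the dependence on $t$, so that the $t$-Fourier transform can be carried out against an exponential. Using polar coordinates $z=re^{i\phi}$, one has $z-z/|z|^2=(r-1/r)e^{i\phi}$. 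The natural substitution is $r=e^{s}$, giving $r-1/r=2\sinh s$. Hence
\begin{equation*}
u=\frac{r^2\bigl((2\sinh s\cos\phi+t)^2+4\sinh^2 s\sin^2\phi\bigr)}{4r^2\sin^2\phi}.
\end{equation*}
The $r^2$ cancels, and we get
\begin{equation*}
u=\frac{(2\sinh s\cos\phi+t)^2}{4\sin^2\phi}+\sinh^2 s .
\end{equation*}
With $y\,dz = r^2\sin\phi \, dr\, d\phi = e^{3s}\sin\phi\, ds\, d\phi$, we obtain $\mathcal{M}$ as an explicit integral of $k(u)$ over $(s,\phi)\in\mathbb{R}\times(0,\pi)$.

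Next we take the Fourier transform in $t$ and shift $t\mapsto t-2\sinh s\cos\phi$. This produces a phase $e(2a\sinh s\cos\phi)$ times
\begin{equation*}
\int k\Bigl(\frac{t^2}{4\sin^2\phi}+\sinh^2 s\Bigr)e(-at)\,dt .
\end{equation*}
Compact support of $k$ (since $f_\infty\in C_c^\infty$) justifies all uses of Fubini.

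The key step is then to express $k$ through $h_\infty$ via the inverse Selberg/Harish-Chandra transform, in the form used in \cite[\S 3]{KL-kuznetsov}:
\begin{equation*}
k(u)=\frac{1}{4\pi}\int_{-\infty}^{\infty}P_{-1/2+it}(1+2u)\,t\tanh(\pi t)\,h_\infty(t)\,dt,
\end{equation*}
and to interchange integrals, which is allowed since $h_\infty$ is Paley--Wiener. This reduces the claim to evaluating, for each fixed $t$, the integral of the Legendre function $P_{-1/2+it}(1+2u)$ against the remaining kernel. Formally this is the same as computing the modified Zagier transform of the spherical function $\omega_t(z_1,z_2)$ and then taking its Fourier transform. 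I expect that integral to give
\begin{equation*}
\frac{\cosh(\pi t)}{\pi a}K_{2it}(4\pi a)\times(\text{constant}),
\end{equation*}
so that, combined with $\tanh(\pi t)$, it yields the claimed $\sinh(\pi t)\,t\,K_{2it}(4\pi a)/(2\pi a)$.

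To evaluate that integral, I would rather avoid Legendre functions altogether. Instead I would write the spherical function $\omega_t$ via its integral representation
\begin{equation*}
\omega_t(g)=\int_{K_\infty}\operatorname{Im}(kg\cdot i)^{1/2+it}\,dk .
\end{equation*}
Then the Fourier transform in $t$ (the translation parameter) of $\operatorname{Im}(\cdot)^{1/2+it}$ composed with $z\mapsto 1/\overline{z}$ reduces to Gaussian/Mellin-type integrals of $|x+iy|^{-1-2it}$, which give $K$-Bessel functions through the standard formula
\begin{equation*}
\int_{-\infty}^{\infty}(x^2+y^2)^{-s}e(-ax)\,dx=\frac{2\pi^s}{\Gamma(s)}\Bigl(\frac{|a|}{y}\Bigr)^{s-1/2}K_{s-1/2}(2\pi|a|y).
\end{equation*}
Applying this formula twice, in the two steps corresponding to $z$ and to $1/\overline{z}$, and then integrating in $y$ against $y\,dz$, should produce $K_{it}\cdot K_{it}$. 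These collapse to $K_{2it}$ via the Nicholson-type identity
\begin{equation*}
\int_0^\infty K_{2\nu}\bigl(2x\cosh s\bigr)\,ds=\tfrac{1}{2}K_\nu(x)^2,
\end{equation*}
or its dual. Alternatively, the cleanest route may be to check the identity against the spectral side: the opposite-sign spectral computation \eqref{eq:opposite-sign-cusp-refined} involves $\int_0^\infty K_{it}(2\pi w)^2\,dw$ with sign twist, suggesting the kernel $K_{2it}$ arises from Gradshteyn-type formulas for $\int K_{it}(x)K_{it}(y)\ldots$.

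The main obstacle is this explicit evaluation and its constants. Compared with the same-sign case, where Knightly--Li obtain $J$-Bessel functions through the oscillatory factor $e(2a\sinh s\cos\phi)$, here the reflection changes a $\cosh$ into a $\sinh$. This turns $J_{2it}$ into $K_{2it}$ and $\cosh(\pi t)$ into $\sinh(\pi t)$, and correctly tracking the factors $2\pi$, $\frac{1}{4\pi}$ and $a^{-1}$ will require care. I would first test the final constant on a Gaussian-type $h_\infty$ to fix the normalisation.
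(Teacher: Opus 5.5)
Your proposal has a genuine gap, and it sits exactly where the content of the proposition lies.

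\textbf{The interchange fails.} The step ``insert $k(u)=\frac{1}{4\pi}\int P_{-1/2+ir}(1+2u)\,r\tanh(\pi r)h_\infty(r)\,dr$ and interchange, which is allowed since $h_\infty$ is Paley--Wiener'' is false. The obstruction is in the geometric variables, not in $r$. Run your own change of variables on absolute values (Tonelli): the integral that Fubini needs contains $\iint_{\mathbb{R}^2}|P_{-1/2+ir}(1+2(w^2+\tau^2))|\,dw\,d\tau$. Since $P_{-1/2+ir}(1+2u)$ is only of size $u^{-1/2}$ times an oscillating factor, this behaves like $\int_{\mathbb{R}^2}|v|^{-1}\,dv=\infty$. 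So the ``modified Zagier transform of $\omega_r$'' you want to compute does not exist as an absolutely convergent integral. This is precisely the failure of absolute convergence the paper points out before its proof.

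\textbf{The evaluation is not done.} The key computation is only conjectured (``I expect'', ``should produce'', an unspecified constant), and the tools you list do not close it. The Fourier formula for $(x^2+y^2)^{-s}$ would be applied at $\operatorname{Re}s=\frac12$, the edge of its convergence. Nicholson's formula $K_\nu(x)^2=2\int_0^\infty K_{2\nu}(2x\cosh s)\,ds$ writes $K_\nu^2$ as an average of $K_{2\nu}$, so it does not turn a product $K_{it}K_{it}$ into the single value $K_{2it}(4\pi a)$. Fixing the constant on a Gaussian test function is a sanity check, not a proof.

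\textbf{A correction to the measure.} Here $dz$ is the invariant measure $dx\,dy/y^2$; this is what the paper's computation of $\mathcal{M}$ in terms of $k$ uses. So $y\,dz=e^{s}\,ds\,d\phi/\sin\phi$, not $e^{3s}\sin\phi\,ds\,d\phi$. With this fix your coordinates are in fact efficient. Symmetrise under $(s,\phi)\mapsto(-s,\pi-\phi)$, set $w=\sinh s$, shift $t$, and rescale $t=2\tau\sin\phi$; this cancels the $1/\sin\phi$, and rotation invariance in $(w,\tau)$ gives
\begin{equation*}
\widehat{\mathcal{M}}(a)=2\pi\iint_{\mathbb{R}^2}k(w^2+\tau^2)\,e(2aw)\,dw\,d\tau ,
\end{equation*}
i.e.\ $\mathcal{M}(t)=\pi Q(t^2/4)$ with $Q(v)=\int_{\mathbb{R}}k(v+\xi^2)\,d\xi$. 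This is the content of the paper's two lemmas.

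\textbf{The missing idea.} From here, use the Abel transform instead of Legendre functions. The function $g(\alpha)=2Q(\sinh^2(\alpha/2))$ has Fourier transform $h_\infty$, so $\mathcal{M}(2\sinh(\alpha/2))=\frac14\int h_\infty(r)e^{i\alpha r}\,dr$. What remains is to justify swapping the $r$- and $t$-integrals, and this is the real work in the paper:
\begin{itemize}
\item insert a damping factor $e^{-2\pi\epsilon|t|}$ so that Fubini applies;
\item substitute $t=2\sinh\beta$ and integrate by parts;
\item move the $\beta$-contour to $\operatorname{Im}\beta=\pm\pi/2$, where $\sinh(\beta\pm i\pi/2)=\pm i\cosh\beta$ converts the oscillating phase into $e^{-x\cosh\beta}$ with $x=4\pi a$.
\end{itemize}
As $\epsilon\to0$, the boundary and vertical-segment terms from the two half-lines cancel, leaving $-\frac{4r}{x}e^{-\pi r}K_{2ir}(x)$. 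Hence $\widehat{\mathcal{M}}(a)=-\frac{1}{2\pi a}\int K_{2ir}(4\pi a)e^{-\pi r}rh_\infty(r)\,dr$. Since $rh_\infty(r)$ is odd and $K_{2ir}$ is even in $r$, only the odd part $-\sinh(\pi r)$ of $e^{-\pi r}$ survives. That contour shift is the rigorous form of your heuristic that the reflection ``turns $\cosh$ into $\sinh$''.
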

We first introduce a couple of lemmas in order to express $\mathcal{M}$ in terms of $h_\infty$. The expression thus obtained cannot be used directly due to issues of convergence, therefore we shall prove the proposition by performing a countour shift.

\begin{lemma}\label{lemma:D-to-kernel}
    Let $ h_\infty$ and $f_\infty$ be defined as in Proposition \ref{prop:DtoHtransform}, then for $t \in \mathbb{R}$, we have
    \begin{equation}
        \mathcal{M}(t)=\pi\sqrt{1+t^2}\int_{-\infty}^{\infty}k(\left(\frac{t^2}{4}+1\right)x^2+\frac{t^2}{4})dx
    \end{equation}
    where $k(u)$ is the pair-point invariant kernel attached to $f_\infty$ given in \eqref{def:pair-point-kernel}.
\end{lemma}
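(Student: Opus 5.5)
The plan is to evaluate $\mathcal{M}(t)$ by writing the invariant $u(z+t,1/\overline{z})$ explicitly and then changing variables until it takes the form $\alpha v^2+\beta$. I read the "standard area element" with the extra factor $y$ as giving the measure $dx\,dy/y$ on $\mathbb{H}$, as for the Zagier transform $\mathcal{Z}$; this normalisation has to be fixed first, since it drives every constant below.

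Write $z=x+iy$ and $r=|z|^2$. Then $1/\overline{z}=z/r$, so $\operatorname{Im}(1/\overline{z})=y/r$, and
\[
u\Bigl(z+t,\tfrac{1}{\overline{z}}\Bigr)=\frac{r\bigl[(x(1-r^{-1})+t)^2+y^2(1-r^{-1})^2\bigr]}{4y^2}.
\]
Pass to polar coordinates $z=\rho e^{i\phi}$ with $\rho>0$ and $\phi\in(0,\pi)$, and put $s=\rho-\rho^{-1}$. The expression becomes
\[
u=\frac{s^2+2st\cos\phi+t^2}{4\sin^2\phi},
\]
and the measure $dx\,dy/y$ becomes $d\rho\,d\phi/\sin\phi$.

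The substitution $\rho\mapsto s$ maps $(0,\infty)$ bijectively onto $\mathbb{R}$, with $d\rho=\rho\,ds/\sqrt{s^2+4}$. The factor $\rho/\sqrt{s^2+4}$ is removed by symmetry:
\begin{itemize}
\item the involution $(\rho,\phi)\mapsto(\rho^{-1},\pi-\phi)$, i.e.\ $z\mapsto -1/\overline{z}$, sends $(s,\phi)\mapsto(-s,\pi-\phi)$, which preserves both $u$ and $d\rho\,d\phi/\sin\phi$;
\item averaging the integrand over this involution replaces $\rho$ by $\tfrac12(\rho+\rho^{-1})=\tfrac12\sqrt{s^2+4}$.
\end{itemize}
The resulting measure is $\tfrac12\,ds\,d\phi/\sin\phi$ on $\mathbb{R}\times(0,\pi)$.

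Next complete the square, $s^2+2st\cos\phi+t^2=(s+t\cos\phi)^2+t^2\sin^2\phi$, and translate $s$. Then substitute $s+t\cos\phi=2v\sin\phi$. This gives $u=v^2+t^2/4$ with $ds/\sin\phi=2\,dv$, so the $\phi$-integral becomes trivial and contributes a factor $\pi$. What remains is a one-variable integral of $k(v^2+t^2/4)$ over $\mathbb{R}$ times an explicit constant. Finally rescale $v=\lambda x$ with $\lambda^2=1+t^2/4$, which produces the stated argument $(t^2/4+1)x^2+t^2/4$ together with a Jacobian factor $\lambda$.

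All manipulations are justified by Fubini and Tonelli. Since $f_\infty$ is compactly supported, $k(u)$ vanishes for $u$ large, so for fixed $t$ the integrand is supported where $s$ is bounded and $|s+t\cos\phi|\ll\sin\phi$. The integral is therefore absolutely convergent near $\phi=0,\pi$, and every change of variables is legitimate.

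The main obstacle is bookkeeping of constants rather than anything conceptual. My quick pass produces the prefactor $\pi\sqrt{1+t^2/4}$ rather than $\pi\sqrt{1+t^2}$. I would therefore recheck three things:
\begin{itemize}
\item the exact normalisation of the area element $dz$ in \eqref{def:modified-zagier};
\item the normalisation of $u(z_1,z_2)$;
\item the Jacobian of the symmetrisation step.
\end{itemize}
Any discrepancy should reduce to a rescaling of $t$ or of the measure, and I would adjust the final substitution accordingly so that the argument of $k$ takes the stated form.
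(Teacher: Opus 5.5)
\paragraph{Verdict.} Your computation is correct, and your constant is the right one. The true identity is $\mathcal{M}(t)=\pi\sqrt{1+t^2/4}\int_{-\infty}^{\infty}k\bigl((\tfrac{t^2}{4}+1)x^2+\tfrac{t^2}{4}\bigr)\,dx$. The $\sqrt{1+t^2}$ in the statement is a typo, not a problem with your argument.

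\paragraph{Why the statement's constant is a typo.}
\begin{itemize}
\item The paper's own proof actually ends at $\sqrt{t^2+4}\cdot\pi\int_0^\infty k(\cdots)\,dx=\tfrac{\pi}{2}\sqrt{t^2+4}\int_{-\infty}^{\infty}k(\cdots)\,dx$, which is your constant.
\item The next lemma needs exactly this prefactor. It sets $t=2\sinh(\alpha/2)$, rescales by $\cosh(\alpha/2)$ and arrives at $\pi\int_{\sinh^2(\alpha/2)}^{\infty}k(v)(v-\sinh^2(\alpha/2))^{-1/2}\,dv$. That only works with the prefactor $\pi\cosh(\alpha/2)=\pi\sqrt{1+t^2/4}$.
\item As a sanity check, $k(u)=e^{-u}$ gives $\mathcal{M}(t)=\pi^{3/2}e^{-t^2/4}$ both by your route and by the paper's M\"obius route.
\end{itemize}
So drop your last paragraph. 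The ratio $\sqrt{(1+t^2)/(1+t^2/4)}$ depends on $t$, so no normalisation of $dz$ or of $u$ can produce it. Rescaling $t$ would change the argument of $k$. ``Adjusting'' your computation to reach the displayed constant would introduce an error, not fix one.

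\paragraph{A slip in the symmetrisation step.} The map $(\rho,\phi)\mapsto(\rho^{-1},\pi-\phi)$ is $z\mapsto -1/z$, not $z\mapsto -1/\overline{z}$, which leaves $\mathbb{H}$. It also does not preserve $d\rho\,d\phi/\sin\phi$, which picks up a factor $\rho^{-2}$. What it preserves is $ds\,d\phi/\sin\phi$, and that is exactly why the averaging works. Against this invariant measure, the density $\rho/\sqrt{s^2+4}$ is carried to $\rho^{-1}/\sqrt{s^2+4}$, and the two sum to $1$. If $d\rho\,d\phi/\sin\phi$ were invariant, the averaging would do nothing. Your conclusion $\tfrac12\,ds\,d\phi/\sin\phi$ is nonetheless correct.

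\paragraph{Comparison with the paper's route.} Your route differs genuinely from the paper's.
\begin{itemize}
\item The paper substitutes $w=(z+b)/(z+a)$, with $a,b$ the roots of $X^2-tX-1$. Using $a+b=t$ and $ab=-1$, it computes $|1-w|^2(z\overline{z}+t\overline{z}-1)=(b^2+1)w+(a^2+1)\overline{w}$. The invariant then becomes $(\tfrac{t^2}{4}+1)\zeta^2/\eta^2+\tfrac{t^2}{4}$, a function of $\zeta/\eta$ alone. The leftover density $\sqrt{t^2+4}/|1-w|^2$ is integrated along rays, which is the inner integral $\pi x$. This lands directly on the displayed argument of $k$, at the cost of that algebra.
\item Your polar-coordinate route needs no M\"obius algebra, only the symmetrisation and a $\phi$-dependent affine change in $s$. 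It produces $\pi\int_{\mathbb{R}}k(v^2+t^2/4)\,dv$, which is precisely the form the following lemma consumes. Your final rescaling $v=\lambda x$ is therefore purely cosmetic.
\end{itemize}
Your Tonelli justification is sound and more explicit than the paper's: if $k$ is supported in $[0,U]$, then $|s+t\cos\phi|\le 2\sqrt{U}\sin\phi$ on the support.
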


\begin{proof}
    Let $a=t/2+\sqrt{t^2/4+1}$ and $b=t/2-\sqrt{t^2/4+1}$, and apply the change of variables $$w=\zeta+i\eta := \frac{z+b}{z+a} \qquad \Longrightarrow \qquad z=\frac{aw-b}{-w+1}$$
    which is a conformal map $\mathbb{H} \to \mathbb{H}$. Since $\overline{z}^{-1}=z|\overline{z}|^{-2}$, we have
    $$u(z+t, \frac{1}{\overline{z}})=\frac{|z\overline{z}+t\overline{z}-1|^2}{4\text{Im}(z)^2}$$
    The imaginary parts of $z$ and $w$ are further related by $\text{Im}(z)=\sqrt{t^2+4}\ \eta/|-w+1|^2$, so $u$ may be further simplified as $|T(w)|^2/(4(t^2+4)\eta^2)$ where
    \begin{equation}
    T(w)=|-w+1|^2\left|\frac{(aw-b)(a\overline{w}-b)}{|-w+1|^2}+t\frac{a\overline{w}-b}{-\overline{w}+1}-1\right|
    \end{equation}
    Upon collecting terms, noting that $a,b$ are roots of $X^2-tX-1$
    $$T(w)=(-ab+bt+1)w+(-ab+at+1)\overline{w}$$ thus in terms of $\zeta,\eta$ we obtain
    \begin{equation}
    \mathcal{M}(t)=\sqrt{t^2+4}\int_{\mathbb{H}}k\left((\frac{t^2}{4}+1)\frac{\zeta^2}{\eta^2}+\frac{t^2}{4}\right)\frac{1}{(1-\zeta)^2+\eta^2}\frac{d\eta d\zeta}{\eta}
    \end{equation}
    Changing coordinates with $x=\zeta/\eta$ and $y=\zeta$ then
    $$\mathcal{M}(t)=\sqrt{t^2+4}\int_{0}^{\infty}k\left((\frac{t^2}{4}+1)x^2+\frac{t^2}{4}\right)\int_{-\infty}^{\infty}\frac{dy}{(1-y)^2+y^2/x^2}\frac{dx}{x}$$
    We conclude by noting that the inner integral is $\pi x$.
\end{proof}

As immediate consequences of the lemma, $\mathcal{M}(t)$ is even and compactly supported. We now proceed in obtaining an explicit relation with $h_\infty$.

\begin{lemma}
    Let $h_\infty$ and $f_\infty$ be defined as in Proposition \ref{prop:DtoHtransform}, then
    \begin{equation}\label{eq:D-to-h-infty}
        \mathcal{M}(t)=\frac{1}{4}\int_{-\infty}^{\infty}h_\infty(r)e^{i\alpha r}dr
    \end{equation}
        where $\alpha$ satisfies $t=2\sinh(\alpha/2)$.
\end{lemma}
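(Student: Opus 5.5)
The plan is to combine Lemma \ref{lemma:D-to-kernel} with the Selberg/Harish-Chandra transform chain relating the point-pair invariant $k(u)$ to $h_\infty$. This is the chain implicit in Proposition \ref{prop:paley-wiener-isom}, in the normalisation of \cite[\S 3]{KL-kuznetsov}.

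First, I rewrite the integral in Lemma \ref{lemma:D-to-kernel} as an Abel transform. Set $A=\tfrac{t^2}{4}+1$ and $B=\tfrac{t^2}{4}$. The substitution $x'=\sqrt{A}\,x$ gives
\[
\int_{-\infty}^{\infty}k(Ax^2+B)\,dx=A^{-1/2}\int_{-\infty}^{\infty}k(x'^2+B)\,dx'=A^{-1/2}\,Q(B),
\]
where $Q(v):=\int_{-\infty}^\infty k(v+x^2)\,dx=\int_v^\infty k(u)(u-v)^{-1/2}\,du$ is the usual Abel transform of $k$. The prefactor $\sqrt{t^2+4}$ produced in the proof of Lemma \ref{lemma:D-to-kernel} is exactly $2\sqrt{A}$, so it cancels $A^{-1/2}$ up to a constant. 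Hence $\mathcal{M}(t)$ equals a fixed constant times $Q(t^2/4)$.

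Second, I parametrise $t=2\sinh(\alpha/2)$, so that $t^2/4=\sinh^2(\alpha/2)$. Then $Q(t^2/4)=Q(\sinh^2(\alpha/2))$, and this is (a constant multiple of) the function $g(\alpha)$ in the Selberg transform. By the construction in \cite[\S 3]{KL-kuznetsov} underlying Proposition \ref{prop:paley-wiener-isom}, $h_\infty$ is the Fourier transform of $g$. Since $f_\infty$ is smooth and compactly supported, $g$ is even, smooth and compactly supported, so Fourier inversion applies without convergence issues:
\[
g(\alpha)=\frac{1}{2\pi}\int_{-\infty}^\infty h_\infty(r)e^{-i\alpha r}\,dr .
\]
Because $h_\infty$ is even, the sign in the exponent can be flipped to $e^{i\alpha r}$, which gives \eqref{eq:D-to-h-infty} up to the overall constant. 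The parametrisation is legitimate because $\alpha\mapsto 2\sinh(\alpha/2)$ is a bijection $\mathbb{R}\to\mathbb{R}$. The choice of the sign of $\alpha$ is also harmless, since both sides are even.

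The main obstacle is purely bookkeeping: making the constant come out as exactly $\tfrac14$. This requires pinning down the precise normalisations of $k$, $Q$, $g$ and $h_\infty$ used in \cite[Proposition 3.6]{KL-kuznetsov} and in the Haar measures of \S 1.4.3. It also requires reconciling the prefactor in the statement of Lemma \ref{lemma:D-to-kernel}, written as $\pi\sqrt{1+t^2}$, with the $\sqrt{t^2+4}$ that appears in its proof, where the $\pi$ comes from the inner $y$-integral. I would fix these constants by checking the identity on one explicit pair $(f_\infty,h_\infty)$, for instance via the Plancherel value $f_\infty(1)=\frac{1}{4\pi}\int h_\infty(t)\,t\tanh(\pi t)\,dt$ recalled before Corollary \ref{cor:equidistribution}, and then carry the factors through the two steps above.
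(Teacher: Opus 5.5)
Your argument follows the paper's proof. You substitute $t=2\sinh(\alpha/2)$ so that Lemma \ref{lemma:D-to-kernel} becomes a multiple of the Abel transform $Q(\sinh^2(\alpha/2))$, and then invoke the Selberg/Harish-Chandra inversion (the paper cites Iwaniec's (1.62) and (1.64)).

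What you leave open is the constant, which is the actual content of the identity. It closes in one line:
\begin{itemize}
\item The prefactor in Lemma \ref{lemma:D-to-kernel} should read $\pi\sqrt{1+t^2/4}=\pi\cosh(\alpha/2)$, not $\pi\sqrt{1+t^2}$. Its proof gives $\pi\sqrt{t^2+4}\int_0^\infty=\tfrac{\pi}{2}\sqrt{t^2+4}\int_{-\infty}^{\infty}$, so do not lose the factor $\tfrac12$ coming from the half-line.
\item Hence $\mathcal{M}(t)=\pi Q(\sinh^2(\alpha/2))$.
\item With Iwaniec's normalisation $g(\alpha)=2Q(\sinh^2(\alpha/2))$ and $h(r)=\int g(\alpha)e^{ir\alpha}\,d\alpha$, this equals $\tfrac{\pi}{2}g(\alpha)=\tfrac14\int h_\infty(r)e^{i\alpha r}\,dr$.
\end{itemize}

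Your proposed Plancherel check is exactly what justifies using Iwaniec's normalisation for $h_\infty$: his inversion formula at $u=0$ reads $k(0)=\tfrac{1}{4\pi}\int h(r)\,r\tanh(\pi r)\,dr$, which matches the Plancherel value for $f_\infty(1)$ quoted in the paper.
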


\begin{proof}
    Letting $t=2\sinh(\alpha/2)$ and $v=x^2+\sinh^2(\alpha/2)$ in the expression in Lemma \ref{lemma:D-to-kernel}, we may rewrite $\mathcal{M}(t)$ as
    \begin{equation}
        \mathcal{M}(t)=\pi\int_{\sinh^2(\alpha/2)}^{\infty}k(v)\frac{1}{\sqrt{v-\sinh^2(\alpha/2)}}dv
    \end{equation}
        By the inversion formulas (1.62) and (1.64) in \cite{Spectralmethods} the integral above can be expressed as
        $$\frac{1}{4}\int_{-\infty}^{\infty}h(r)e^{i\alpha r}dr$$
        which completes the proof.
\end{proof}

\begin{proof}[Proof of Proposition \ref{prop:DtoHtransform}] Using directly the integral expression \eqref{eq:D-to-h-infty} and exchanging the order of integration is not possible because of failure of absolute convergence. We evaluate the integral by performing a contour shift.
    For $\epsilon>0$ and $x>0$, consider 
    $$ I_+(x,\epsilon;r)  =\int_{0}^{\infty}e^{(ix-\epsilon)\sinh\alpha+2ir\alpha}\cosh\alpha d\alpha \qquad I_-(x,\epsilon;r)  =\int_{0}^{\infty}e^{(-ix-\epsilon)\sinh\alpha-2ir\alpha}\cosh\alpha d\alpha$$
    both of which are absolutely convergent by the exponential decay of the integrand. Upon integrating by parts
    $$I_+(x,\epsilon;r)=\frac{-1}{ix-\epsilon}-\frac{2ir}{ix-\epsilon}\int_{0}^{\infty}e^{(ix-\epsilon)\sinh\alpha+2ir\alpha}d\alpha$$
    Consider the rectangular contour coming from $\infty+i\pi/2$ to $i\pi/2$, going down from $i\pi/2$ to $0$ and then going back to $\infty$, by Cauchy theorem
    \begin{equation}\label{eq:bigIplus}
    \begin{split}
        I_+(x,\epsilon;r) = & \frac{-1}{ix-\epsilon}-\frac{2ir}{ix-\epsilon}\int_{0}^{i\pi/2}e^{(ix-\epsilon)\sinh\alpha+2ir\alpha}d\alpha \\
        &-\frac{2ir}{ix-\epsilon}e^{-\pi r}\int_{0}^{\infty}e^{(-x-i\epsilon)\cosh\alpha+2ir\alpha}d\alpha
    \end{split}    
    \end{equation}
    Similarly, considering the rectangular contour from $\infty-i\pi/2$ to $-i\pi/2$, going up from $-i\pi/2$ to $0$, and then going back to $\infty$
    \begin{equation}\label{eq:bigIminus}
        \begin{split}
        I_-(x,\epsilon;r) = &  \frac{1}{ix+\epsilon}-\frac{2ir}{ix+\epsilon}\int_{0}^{-i\pi/2}e^{(-ix-\epsilon)\sinh\alpha-2ir\alpha}d\alpha \\
        & -\frac{2ir}{ix+\epsilon}e^{-\pi r}\int_{0}^{\infty}e^{(-x+i\epsilon)\cosh\alpha-2ir\alpha}d\alpha
        \end{split}
    \end{equation}
    By the dominated convergence theorem and the $K$-Bessel expression  \cite[(8.432.1)]{Gradshteyn}, we have 
    $$\lim_{\epsilon \to 0} \ (I_+(x,\epsilon;r)+I_-(x,\epsilon;r))=-\frac{4r}{x}e^{-\pi r}\int_{0}^{\infty}e^{-x\cosh\alpha}\cosh(2ir\alpha)d\alpha=-\frac{4r}{x}e^{-\pi r}K_{2ir}(x)$$
    By the compact support of $\mathcal{M}$, we have
    \begin{equation*}
        \begin{split}
            \widehat{\mathcal{M}}(a) & = \lim_{\epsilon \to 0}\left(\int_{0}^{\infty}\mathcal{M}(t)e^{2\pi (ia-\epsilon)t}dt+\int_{0}^{\infty}\mathcal{M}(-t)e^{-2\pi(ia+\epsilon)t}dt\right) = \\
            & =\frac{1}{4} \ \lim_{\epsilon \to 0} \int_{0}^{\infty}\left( \int_{-\infty}^{\infty}h_\infty(r)e^{i\alpha r+2\pi (ia-\epsilon)t}dr+\int_{-\infty}^{\infty}h_\infty(r)e^{-i\alpha r-2\pi(ia+\epsilon)t}dr\right)dt
        \end{split}
    \end{equation*}
    where we recall $t=2\sinh(\alpha/2)$. Since $h_\infty(r) \in PW^{\operatorname{even}}(\mathbb{C})$ and by the exponential decay in $t$, we may apply Fubini to changing the order of integration. Applying changes of variables $t\mapsto2\sinh(\alpha/2)$ and $\alpha \mapsto 2\alpha$
    $$\widehat{\mathcal{M}}(a)=\frac{1}{2}\ \lim_{\epsilon \to 0}\int_{-\infty}^{\infty}h_\infty(r)(I_+(4\pi a,\epsilon;r)+I_-(4\pi a,\epsilon;r))dr$$
   Using again the polynomial decay of $h_\infty$ and  \eqref{eq:bigIplus}, \eqref{eq:bigIminus}, we deduce by dominating convergence
   $$\widehat{\mathcal{M}}(a)=-\frac{1}{2\pi a}\int_{-\infty}^{\infty}K_{2ir}(4\pi a)e^{-\pi r }rh_\infty (r)dr = \frac{1}{2\pi a}\int_{-\infty}^{\infty}K_{2ir}(4\pi a)\sinh (\pi r)r h_\infty(r)dr$$
   where the second equality is obtained upon splitting the integrand in terms of the even and odd parts. The proposition is then proven up to relabeling variables.
\end{proof}

\subsection{Proof of Theorems \ref{thm:unrefinedBKformula} and \ref{thm:refined-opposite-signs}}
 Given the results for the cuspidal and continuous contribution from Section 4.1, to prove Theorem \ref{thm:unrefinedBKformula} and Theorem \ref{thm:refined-opposite-signs} it suffices to obtain an expression for $I_\delta(f_{\mathbb{A}})$ where $\delta$ is a first or second cell terms given in \eqref{def:cells}. \bigbreak 

\noindent \emph{4.3.1 First cell term}: Considering the archimedean component
\begin{equation}
    I_{\big(\begin{smallmatrix}
    -m_2/m_1 & \\ & 1
\end{smallmatrix}\ \big)}(f_\infty)=\frac{1}{\sqrt{y_1y_2}}\int_{-\infty}^{\infty}f_\infty(\begin{psmallmatrix}
    m_2y_2  & t \\ & -m_1y_2
\end{psmallmatrix})e(t)dt
\end{equation}
it is immediate by the determinant condition on the support of $f_\infty$ that the first cell term contribution identically vanishes. \bigbreak 

\noindent \emph{4.3.2 Second cell terms}: Since $f_\infty$ is supported on $G^+(\mathbb{R})$ and it is bi-$K_\infty$-invariant, for $\delta=\begin{psmallmatrix}
    0 & -\mu \\ 1 & 0
\end{psmallmatrix}$ with $\mu \in \mathbb{Q}^\times$, we may follow \cite[\S 7.5.2]{KL-kuznetsov} to obtain
\begin{equation}\label{eq:unrefined-orbits}
    I_\delta(f_\mathbb{A})=\frac{I_\delta(f)}{\sqrt{y_1y_2}}\iint_{\mathbb{R}^2}k(z_1,\frac{-\mu}{z_2})e(m_2x_2+m_1x_1)dx_2dx_1
\end{equation}
when $\mu = 1/c^2$ and it is zero otherwise.
Let $x_1=\sqrt{\mu m_2/m_1}t_1$ and $x_2=\sqrt{\mu m_1/m_2}t_2$ so that $dx_1dx_2=\mu dt_1dt_2$ and 
$$\big(x_1+iy_1,\frac{-\mu}{x_2+iy_2} \big)= \sqrt{\frac{\mu m_2}{m_1}}\big( t_1+i\frac{m_1y_1}{\sqrt{\mu m_1m_2}},\frac{-1}{t_2+i\frac{m_2y_2}{\sqrt{\mu m_1m_2}}}\big)$$
Setting $w = m_1y_1=m_2y_2$, the integral of $I_\delta(f_\mathbb{A},w)$ is the product of $\mu\sqrt{m_1m_2}\ I_\delta(f)$ and 
$$\int_{0}^{\infty}\iint_{\mathbb{R}^2}k\big(t_1+i\frac{w}{\sqrt{\mu m_1m_2}},\frac{-1}{t_2+i\frac{w}{\sqrt{\mu m_1m_2}}}\big)e( \sqrt{\mu m_1m_2}(t_1+t_2))dt_1dt_2\frac{dw}{w}$$
We now perform another change of variables, namely $t=t_1+t_2$, $x=-t_2$ and $y=w/\sqrt{\mu m_1m_2}$ to get
$$\int_{0}^{\infty}\int_{-\infty}^{\infty}\int_{-\infty}^{\infty}k(x+t+iy,\frac{1}{x-iy})e( \sqrt{\mu m_1m_2}t)dtdx\frac{dy}{y}$$
Exchanging the order of integration, which is justified by the compact support of $\mathcal{M}(t)$ (see comment after Lemma \ref{lemma:D-to-kernel}), we obtain
\begin{equation}\label{eq:opposite-sign-geom-side}
\int_0^{\infty}I_\delta(f,w)dw=\sqrt{\mu m_1m_2}I_\delta(f)\int_{-\infty}^{\infty}\mathcal{M}(t)e( \sqrt{\mu m_1m_2}t)dt
\end{equation}
We now present the proof of Theorem \ref{thm:unrefinedBKformula} and Theorem \ref{thm:refined-opposite-signs}, though this is really just a recollection of the formulas that we have proved so far.
\begin{proof}[Proof of Theorem \ref{thm:unrefinedBKformula} and Theorem \ref{thm:refined-opposite-signs} ] The first theorem comes from combining equations  \eqref{eq:opposite-sign-cusp-refined}, \eqref{eq:opposite-sign-cts-refined}, \eqref{eq:opposite-sign-geom-side} together with Proposition \ref{prop:DtoHtransform}. There is a delicate discussion sorrounding justifying exchanging summation and integration in
$$\int_0^\infty\sum_{c\in\mathcal{C}(f)}I_\delta(f_\mathbb{A},w)dw=\int_0^\infty\sum_{c\in \mathcal{C}(f)}H(m,n,c)I_\delta(f_\infty,w)dw$$
Looking back to equation \eqref{eq:unrefined-orbits} and performing change of variable $wc/\sqrt{m_1m_2} \mapsto w$, it suffices to show that
$$\iint_{\mathbb{R}^2}\left|k\left(t_1+iw, \frac{-1}{t_2+iw}\right) \right|dt_1 dt_2 $$
vanishes identically for all sufficiently large or small values of $w$, uniformly in $c$. This is shown in \cite[\S 2.2.7]{generalised}.

Now suppose that $f$ satisfies the geometric and spectral assumptions, then by Proposition \ref{prop:kloosterman-properties}, we may restrict our attention to $m_1,m_2 \in \mathbb{Z}$. Making use of equations \eqref{def:discrete-weights} and \eqref{def:continuous-weights} we may rewrite the spectral side as
\begin{gather}
    \frac{1}{2}\sum_{\pi \in \mathcal{F}_0(f)}p(\pi)h_\infty(t_\pi)w(\pi,f)\lambda_{\pi}(m_1)\overline{\lambda_\pi(m_2)} \notag \\
    + \frac{1}{8\pi}\sum_{\chi \in \mathcal{F}_E(f)}p(\pi_{\chi,\chi^{-1}})\int_{-\infty}^{\infty}h_\infty(t)w(\pi_{it}(\chi),f)\lambda_{\pi_{it}(\chi)}(m_1)\overline{\lambda_{\pi_{it}(\chi)}(m_2)}dt \label{eq:analytic-proof}
\end{gather}
    The refined geometric side is instead
    \begin{equation}\label{eq:geometric-proof}
        \frac{1}{2}\sum_{c \in \mathcal{C}(\mathcal{F})}\frac{H(m_1,m_2,c)}{c}H_\infty^{-}\left(\frac{4\pi\sqrt{m_1m_2}}{c} \right)
    \end{equation}
    Equating \eqref{eq:analytic-proof} and \eqref{eq:geometric-proof}, and multiplying both sides completes the proof in the case $h_\infty \in PW^{\operatorname{even}}(\mathbb{C})$. 
    Lastly, we extend the proof to the larger class of $h_\infty$ satisfying \eqref{def:testfunction-conditions}. We begin by noting that, following \cite[Proposition 8.19]{KL-kuznetsov}, we may extend the map given in Proposition \ref{prop:paley-wiener-isom} to a map from some subset of $S\subset C(G^+(\mathbb{R})//K)$ to the space of functions satisfying \eqref{def:testfunction-conditions}. All intermediate Lemmas towards proving Proposition \ref{prop:DtoHtransform} (and thus the proposition itself) are then still valid for the larger class of test functions.
    From \cite[Theorem 3.8 (3)]{generalised}, we may factor the generalised Kloosterman sums as
    \begin{equation}\label{eq:kloosterman-factorisation}
        H(m,n,c)=S(\overline{c_N}m,\overline{c_N}n,c_0)H(m\overline{c_0},n\overline{c_0},c_N)
    \end{equation}
    where $N$ is the level of $f$, and $c=c_0c_N$ with $c_0,c_N \in \mathbb{N}$ and $(c_0,N)=1$. The absolute convergence of \eqref{eq:geometric-proof} is obtained by combining the Weyl bound, the trivial bound from Proposition \ref{prop:kloosterman-properties}, and the bound on $H_\infty$ from Lemma \ref{lemma:H-minus-bound}.
    Then using \cite[Propositions 8.24 and 8.25]{KL-kuznetsov}, the spectral side converges absolutely, and it equals the geometric side.
\end{proof}

\subsection{Parity equidistribution}\label{section:corollary}
We conclude the chapter by proving the parity equidistribution result. We take $h_\infty$ to be one of the two test functions given in \eqref{def:testfunction1} or \eqref{def:testfunction2}. We obtain our corollary by combining two distribution results, obtained by the same and opposite sign case BK formulas respectively.
\begin{proposition}\label{prop:positive-bound}
    Let $h_\infty$ be one of \eqref{def:testfunction1} or \eqref{def:testfunction2} , if $f \in \mathcal{H}$ satisfies both the Geometric and Spectral assumptions, then for all $m_1,m_2 >0$ with $(m_1m_2,N)=1$ we have
    \begin{equation}
        \sum_{\pi \in \mathcal{F}_0(f)}h_\infty(t_\pi)w(\pi,f)\lambda_{\pi}(m_1)\overline{\lambda_{\pi}(m_2)}+(cts.)=\delta_{m_1=m_2}\delta+O\left(\frac{f_{\mathbb{A}}(1)m_1m_2}{T^2k(\mathcal{F})}\right)
    \end{equation}
\end{proposition}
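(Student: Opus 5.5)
We apply the refined same-sign Bruggeman--Kuznetsov formula, i.e. the refined analogue of Section \ref{section:same-sign} in the sense of \cite{generalised} and \cite[Theorem 8.1]{KL-kuznetsov}, to the chosen $h_\infty$, and then bound the geometric side. That formula, with our normalisations, reads
\[
\sum_{\pi}h_\infty(t_\pi)w(\pi,f)\lambda_\pi(m_1)\overline{\lambda_\pi(m_2)}+(\text{cts.})=\delta_{m_1=m_2}\,\delta\,\frac{1}{4\pi}\int h_\infty(t)t\tanh(\pi t)\,dt+\sum_{c\in\mathcal{C}(f)}\frac{H(m_1,m_2,c)}{c}H_\infty^+\!\left(\frac{4\pi\sqrt{m_1m_2}}{c}\right),
\]
with $H^+_\infty(x)=\tfrac{i}{\pi}\int J_{2it}(x)\,t\,h_\infty(t)/\cosh(\pi t)\,dt$. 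The first-cell term is identified through the Plancherel formula and the local weights \eqref{def:local-weights}; this gives $f_\infty(1)\delta$ up to the normalisation implicit in the statement. Both test functions satisfy \eqref{def:testfunction-conditions}: they are even, entire, and have polynomial (in fact exponential) decay in horizontal strips. Hence the formula applies.

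The work is in bounding the Kloosterman term. By Proposition \ref{prop:kloosterman-properties}, $c$ ranges over $\kappa(\mathcal{F})\mathbb{N}$ and $|H(m_1,m_2,c)|\le c\kappa(\mathcal{F})f(1)$. The Kloosterman term is therefore at most
\[
\kappa(\mathcal{F})f(1)\sum_{n\ge1}\bigl|H_\infty^+(x_n)\bigr|,\qquad x_n=\frac{4\pi\sqrt{m_1m_2}}{n\kappa(\mathcal{F})}.
\]
The key input is a small-argument bound $H_\infty^+(x)\ll x^2\, f_\infty(1)/T^2$ for $x\ll 1$. Summing $x_n^2$ over $n$ then gives $\kappa(\mathcal{F})f(1)\cdot m_1m_2/\kappa(\mathcal{F})^2\cdot f_\infty(1)/T^2$, which is the claimed $O\bigl(f_{\mathbb{A}}(1)m_1m_2/(T^2\kappa(\mathcal{F}))\bigr)$.

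To obtain the bound, we shift the $t$-contour in $H^+_\infty$ to $\operatorname{Im}t=-1$, as in \cite[\S 8]{KL-kuznetsov}. We use $J_{2it}(x)/\cosh(\pi t)$ together with the zeros of $t^2+\tfrac14$ built into $h_\infty$; these cancel the poles of $1/\cosh(\pi t)$ at $t=\pm i/2$, so no residue appears. On the shifted line we use $J_{2+2i\tau}(x)\ll x^2 e^{\pi|\tau|}/|\Gamma(3+2i\tau)|$, valid for $x\le 1+|\tau|$ say. This yields
\[
H^+_\infty(x)\ll x^2\int |h_\infty(\tau-i)|\,(1+|\tau|)^{-3/2}\,d\tau .
\]
Inserting the explicit $h_\infty$ bounds this by $x^2 T^{-2}\cdot T^{\,\cdot}$, and we compare the result with $f_\infty(1)\asymp \Delta T$ or $T^2$.

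The main obstacle is this archimedean estimate: uniformity in $x$ over the finitely many $n$ with $x_n$ large, and the precise power of $T$. For large $x_n$, i.e. $n\ll\sqrt{m_1m_2}/\kappa(\mathcal{F})$, we use instead the trivial bound $H^+_\infty(x)\ll \int|h_\infty(t)|\,|t|\,dt\ll f_\infty(1)$. These finitely many terms are absorbed once we allow the implied constant to depend mildly on $m_1m_2$, or more carefully by a shift further left that gains $x^{2}$ again. If the pure $T^{-2}$ saving fails, we fall back on the exponential-type factor present in the corollary's error term and use the uniform bound $|J_{2+2i\tau}(x)|\le (x/2)^2 e^{x^2/4}e^{\pi|\tau|}/|\Gamma(3+2i\tau)|$.
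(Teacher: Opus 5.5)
The paper itself only cites \cite[Lemma 1.13]{generalised}. Your outline is the natural one: apply the refined same-sign formula, use $\mathcal{C}(f)\subseteq\kappa(\mathcal{F})\mathbb{N}$ and the trivial bound \eqref{eq:trivial-bound}, then estimate $H^+_\infty$. The gap is in the archimedean estimate. For the stated error you need $H^+_\infty(x)\ll x^2 f_\infty(1)/T^2$ for all $0<x\le 4\pi\sqrt{m_1m_2}/\kappa(\mathcal{F})$, with a constant independent of $T$, but you only justify it for $x\ll 1$.

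For the $n$ with $x_n\gg1$ you switch to $H^+_\infty\ll f_\infty(1)$. That bounds those terms only by $O(\sqrt{m_1m_2}\,f_{\mathbb{A}}(1))$, with no factor $T^{-2}$. This is a loss of $T^2$, not of powers of $m_1m_2$, so letting the constant depend on $m_1m_2$ cannot absorb it; the trivial bound suffices only once $x_n\gg T$. Your exponential fallback proves a weaker statement, with an extra factor $e^{O(m_1m_2/\kappa(\mathcal{F})^2)}$ that the proposition does not contain.

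Separately, the bound $J_{2+2i\tau}(x)\ll x^2e^{\pi|\tau|}/|\Gamma(3+2i\tau)|$ counts the exponential twice, since $1/|\Gamma(3+2i\tau)|\asymp e^{\pi|\tau|}(1+|\tau|)^{-5/2}$ already. Taken literally, it puts the weight $e^{\pi|\tau|}(1+|\tau|)^{-3/2}$ in the $\tau$-integral, which then diverges for \eqref{def:testfunction1}. So your displayed bound for $H^+_\infty$ does not follow from it as written.

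The missing ingredient is a Bessel bound uniform in real $x$. Poisson's integral is the $J$-analogue of the formula used in Lemma \ref{lemma:H-minus-bound}:
\[ J_\nu(x)=\frac{(x/2)^\nu}{\Gamma(\nu+\frac12)\Gamma(\frac12)}\int_{-1}^{1}(1-s^2)^{\nu-1/2}\cos(xs)\,ds \qquad (\operatorname{Re}\nu>-\tfrac12). \]
Since $|\cos(xs)|\le 1$, it gives $|J_{2+2i\tau}(x)|\ll x^2/|\Gamma(\frac52+2i\tau)|\ll x^2e^{\pi|\tau|}(1+|\tau|)^{-2}$ for every $x>0$. Now do your shift to $\operatorname{Im}t=-1$. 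There is no residue at $t=-i/2$ thanks to the factor $t^2+\frac14$, and $|\cosh(\pi(\tau-i))|=\cosh(\pi\tau)$. You get $H^+_\infty(x)\ll x^2\int|h_\infty(\tau-i)|(1+|\tau|)^{-1}\,d\tau$. This is $\ll x^2\Delta/T$ for \eqref{def:testfunction1} and $\ll x^2$ for \eqref{def:testfunction2}; in both cases it is $\ll x^2 f_\infty(1)/T^2$ uniformly in $x>0$. Summing over $c\in\kappa(\mathcal{F})\mathbb{N}$ then gives exactly $O\bigl(f_{\mathbb{A}}(1)m_1m_2/(T^2\kappa(\mathcal{F}))\bigr)$. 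In Lemma \ref{lemma:H-minus-bound}, $\cosh(xs)$ appears in place of $\cos(xs)$; that is why the opposite-sign bound carries an exponential factor and this one does not.

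A smaller point: \eqref{def:testfunction1} is not entire, since it has poles at $t=\pm T+i\pi\Delta(k+\frac12)$ for $k\in\mathbb{Z}$. Holomorphy in $|\operatorname{Im}t|<\pi\Delta/2$ is enough for the shift because $\Delta\ge 1$. However, it does not give \eqref{def:testfunction-conditions} with $A>13$ when $\Delta$ is small, so applying the formula you quote needs separate justification in that range.
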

\begin{proof}
    See \cite[Lemma 1.13]{generalised}.
\end{proof}
We will use the BK formula for opposite signs to obtain a similar formula that features the parity. Since we want to obtain bounds on the geometric side, and we know the trivial bound on $H(m,n,c)$ from \eqref{eq:trivial-bound}, it suffices to prove a bound on $H_\infty^-(x)$.
\begin{lemma} \label{lemma:H-minus-bound}
For $h_\infty$ one of \eqref{def:testfunction1} or \eqref{def:testfunction2}, its transform $H_\infty^-$ satisfies the inequality
\begin{equation}
    H_\infty^-(x)\ll f_\infty(1)\left(\frac{x}{T} \right)^2\exp(4\pi x)
\end{equation}
\end{lemma}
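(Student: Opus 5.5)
Since $h_\infty$ is even and, for both \eqref{def:testfunction1} and \eqref{def:testfunction2}, entire with good decay in a horizontal strip, the plan is to estimate $H_\infty^-(x)=\frac{1}{\pi}\int K_{2it}(x)\sinh(\pi t)\,t\,h_\infty(t)\,dt$ directly. The factor $(x/T)^2$ should come from the small-$x$ behaviour of $K_{2it}(x)$ after a contour shift. The factor $\exp(4\pi x)$ should come from a crude bound on the $K$-Bessel function along the shifted line.

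\emph{Step 1: rewrite the integrand.} Write $\sinh(\pi t)=\tfrac12(e^{\pi t}-e^{-\pi t})$ and use evenness of $t\,h_\infty(t)K_{2it}(x)$ in the sense that $K_{2it}=K_{-2it}$. This gives
\[H_\infty^-(x)=\frac{1}{\pi}\int_{-\infty}^{\infty}K_{2it}(x)e^{\pi t}\,t\,h_\infty(t)\,dt.\]
Equivalently, one can use the integral representation $K_{2it}(x)=\tfrac12\int_{\mathbb{R}}e^{-x\cosh u}e^{2itu}du$.

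\emph{Step 2: shift the contour.} Move $t\mapsto t-i\sigma$ with $\sigma=1$, or any fixed $\sigma$ with $1/2<\sigma<A$ so that no poles of $h_\infty$ are crossed; for \eqref{def:testfunction1} one needs $\sigma\Delta$ below the first pole of $1/\cosh$, and otherwise a smaller $\sigma$ suffices. The hyperbolic factor $e^{\pi t}$ becomes oscillatory times a constant. Meanwhile $K_{2it}(x)$ becomes $K_{2\sigma+2it}(x)$, and the representation above gives
\[|K_{2\sigma+2it}(x)|\le K_{2\sigma}(x)\ll_\sigma x^{-2\sigma}.\]
For $x\le 1$ this produces powers of $x$ with the wrong sign, so instead I would shift the other way, to $\operatorname{Im} t=+\sigma$ with $\sigma=1$. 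Then one uses the series expansion $K_\nu=\frac{\pi}{2}\frac{I_{-\nu}-I_\nu}{\sin\pi\nu}$ and $|I_{\nu}(x)|\le \frac{(x/2)^{\operatorname{Re}\nu}}{|\Gamma(\nu+1)|}e^{x}$ to obtain a factor $x^{2}$. The residual growth $|\Gamma(\cdot)|^{-1}\sim e^{\pi|t|}|t|^{-\cdots}$ cancels against $e^{\pi t}$ and $\sin(\pi\nu)^{-1}$. The exponential $e^{x}$, crudely enlarged to $e^{4\pi x}$, absorbs the remaining factors.

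\emph{Step 3: integrate the bound.} After the cancellation one is left with roughly
\[x^2\int |t|^{-1}|t\,h_\infty(t+i)|\,dt\cdot e^{4\pi x}.\]
Because $h_\infty$ carries the factor $(t^2+\tfrac14)/T^2$ and is concentrated at $|t|\asymp T$, on the shifted line one gets
\[\int |h_\infty(t+i)|\,dt\ll \frac{\Delta T}{T^2}\quad\text{or}\quad \ll \frac{T^2}{T^2}\cdot\frac{1}{T}\ \text{(suitably)}.\]
Comparing with $f_\infty(1)\asymp \Delta T$ or $T^2$ then yields $f_\infty(1)(x/T)^2e^{4\pi x}$.

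\emph{Main obstacle.} The delicate step is the bookkeeping of Gamma factors and $\sin(\pi\nu)$ uniformly in $t$ along the shifted line. This is what produces exactly $T^{-2}$ relative to $f_\infty(1)$. If it fails, I would fall back on the cruder route: keep the line at $\operatorname{Im}t=0$, use $|K_{2it}(x)\sinh(\pi t)|\ll$ a uniform bound from the power series, and accept the $e^{4\pi x}$ loss.
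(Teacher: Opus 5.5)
\textbf{The gap is in Step 2: your shift of the $K$-Bessel integral does not produce the factor $x^2$, and that factor is the heart of the lemma.} The integrand $K_{2it}(x)e^{\pi t}t\,h_\infty(t)$ is holomorphic in $t$ on the relevant strip. Since $K_\nu=K_{-\nu}$, shifting to $\operatorname{Im}t=+1$ or to $\operatorname{Im}t=-1$ leaves you integrating $K_{2\mp 2it}(x)$. On \emph{both} lines this has size $x^{-2}$ as $x\to 0$, so shifting ``the other way'' changes nothing.

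Rewriting $K_\nu=\frac{\pi}{2}\frac{I_{-\nu}-I_\nu}{\sin\pi\nu}$ on the shifted line does not rescue this. One of $I_{\pm\nu}$ has index with real part $-2$ and carries $(x/2)^{-2}$, and you only bound the other one. Your fallback on the real line fails too. For real $t$, the quantity $K_{2it}(x)\sinh(\pi t)$ stays of size about $(1+|t|)^{-1/2}$ and merely oscillates in $\log x$ as $x\to0$. So the $x^2$ is a cancellation in the $t$-integral, not a pointwise bound. Some decay $x^{1+\delta}$ is genuinely needed downstream: the corollary sums $H_\infty^-(4\pi\sqrt{m_1m_2}/c)$ over $c$ against the trivial bound $|H(m,n,c)|\le c\kappa(\mathcal{F})f(1)$.

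\textbf{The missing idea is to split \emph{before} shifting.} On the real line, write $K_{2it}(x)=\frac{\pi}{2i\sinh(2\pi t)}\bigl(I_{-2it}(x)-I_{2it}(x)\bigr)$ and use $\sinh(2\pi t)=2\sinh(\pi t)\cosh(\pi t)$. The oddness of $t\,h_\infty(t)$ then folds the two terms into the single integral $H_\infty^-(x)=\frac{i}{2}\int_{\mathbb{R}}I_{2it}(x)\frac{t\,h_\infty(t)}{\cosh(\pi t)}\,dt$. Only now shift to $\operatorname{Im}t=-1$. There the index becomes $2+2it$, and $|I_{2+2it}(x)|\ll (x/2)^2e^{x}/|\Gamma(\tfrac52+2it)|$.

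This shift crosses the pole of $1/\cosh(\pi t)$ at $t=-i/2$. The pole is removable precisely because $h_\infty(\pm i/2)=0$. That is the role of the factor $t^2+\tfrac14$ in both test functions, and your proposal never uses it. Without it, the residue would leave a term proportional to $h_\infty(-i/2)I_1(x)$, which has size $x$ and would make the sum over $c$ diverge.

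\textbf{Your Step 3 bookkeeping is also off.} In fact $\int|h_\infty(t\pm i)|\,dt\asymp\Delta$ (resp.\ $\asymp T$), not $\Delta/T$ (resp.\ $1/T$). The correct source of $T^{-2}$ is that $|t-i|/\bigl(|\Gamma(\tfrac52+2it)|\,|\cosh(\pi t)|\bigr)\asymp(1+|t|)^{-1}$. Then $\int(1+|t|)^{-1}|h_\infty(t-i)|\,dt\asymp\Delta/T$ (resp.\ $\asymp1$), which is $\asymp f_\infty(1)/T^2$ in both cases.
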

\begin{proof}
    Using the relation between the $K$-Bessel and $I$-Bessel function
    \begin{equation*}
        K_{2it}(x)=\frac{\pi}{2i \sinh(2\pi t)}(I_{-2it}(x)-I_{2it}(x))
    \end{equation*}
    (see e.g. \cite[(B.34)]{Spectralmethods}) and the double angle formula for hyperbolic functions, we may rewrite the transform $H_\infty^-$ as
    \begin{equation}
        H_\infty^-(x)=\frac{i}{2}\int_{-\infty}^{\infty}I_{2it}(x)\frac{th_\infty(t)}{\cosh (\pi t)}dt
    \end{equation}
    Shifting the contour to $\text{Im}(z)=-1$, we have
    \begin{equation}
        H_\infty^-(x) \ll \int_{-\infty}^{\infty}\frac{(|t|+1)h_\infty (t)}{\cosh(\pi t)}|I_{2+2it}(x)|dt \ll f_\infty(1)\left(\frac{x}{T}\right)^2 \exp(4\pi x)
    \end{equation}
    via the identity (cf \cite[(8.431.2)]{Gradshteyn} ), valid for $\text{Re}(\nu)>-1/2$,
    $$I_\nu(z)=\frac{(z/2)^\nu}{\Gamma(\nu+1/2)\Gamma(1/2)}\int_{-1}^{1}(1-t^2)^{\nu-1/2}\cosh(zt)dt.$$
\end{proof}
\begin{proof}[Proof of Corollary \ref{cor:equidistribution}]
    We apply Proposition \ref{prop:kloosterman-properties} \emph{3} and Lemma \ref{lemma:H-minus-bound} to the sum of Kloosterman sums to obtain
    \begin{equation}
    \begin{split}
        \sum_{c\in \mathcal{C}(\mathcal{F})}\frac{H(m_1,m_2,c)}{c}H_\infty^- \left( \frac{4\pi \sqrt{m_1m_2}}{c}\right) \ll \frac{f_\mathbb{A}(1)m_1m_2}{T^2} & \sum_{c\in \mathcal{C}(\mathcal{F})} \frac{1}{c^2} \ \exp\left(\frac{16\pi^2 \sqrt{m_1 m_2}}{c}\right)\\
        \ll \frac{f_\mathbb{A}(1)m_1m_2}{T^2 k(\mathcal{F})^2}\exp\left(\frac{16\pi^2\sqrt{m_1m_2}}{k(\mathcal{F})}\right)
    \end{split}
    \end{equation}
    using again \eqref{eq:trivial-bound}. We have thus proved that
    \begin{equation}\label{eq:negativebound}
    \sum_{\pi \in \mathcal{F}_0(f)}p(\pi)h_\infty(t_\pi)w(\pi,f)\lambda_\pi(m_1)\overline{\lambda_\pi(m_2)}+(cts.)=O\left(\frac{f_\mathbb{A}(1)m_1m_2}{T^2 \kappa^2(\mathcal{F})}\exp\left(\frac{16\pi^2\sqrt{m_1m_2}}{\kappa(\mathcal{F})}\right) \right)
    \end{equation}
    Upon taking $m_1=m_2=m$, adding and subtracting Proposition \ref{prop:positive-bound} and \eqref{eq:negativebound}, depending on the desired parity, gives the result.
\end{proof}
 
\section{The holomorphic case}\label{section:holomorphic}
We devote this section to the proof of the Petersson formula for $\kappa=2$, ie Theorem \ref{thm:p2-formula}. We assume throughout that $f \in \mathcal{H}$ is a level $N$ function that satisfies the Geometric and Spectral assumptions. We denote by $\omega$ the character of $K_\infty$ given by $k_\theta \mapsto e^{2i\theta}$ .

As in the previous chapter, we follow the plan of Section \ref{section:holomrophic-variation}. Since $f_\infty$ (see \eqref{def:matrix-coeff}) does not have an absolutely convergent geometric kernel, we introduce a smooth, bi-$\omega$-isotypic truncation. The orthocomplement of the $\omega$-isotypic cuspidal space need not be annihilated, thus we need to estimate the contribution on the spectral side.
Fix $\rho: \mathbb{R} \to [0,1]$ to be a smooth function such that
\begin{equation}
    \rho(x)=\begin{cases}
    1 \quad \text{if} \ x\leq 0 \\
    0 \quad \text{if} \ x\geq 1
\end{cases}
\end{equation}
For $T\geq 1$, let $\varrho^T: G(\mathbb{R}) \to \mathbb{C}$ be the function supported on $G^+(\mathbb{R})$ defined by the equirement that
\begin{equation}\label{eq:smooth-truncation}
    \varrho^T\left(\ k_{\theta_1}^{-1}\begin{psmallmatrix}
    e^{-r/2} & 0 \\ 0 & e^{r/2}
\end{psmallmatrix} k_{\theta_2} \right)=\rho\left(r-T\right) \qquad (\theta_1,\theta_2 \in [0,2\pi), r\geq 0).
\end{equation}
Note that the restriction $r\geq 0$ makes $\varrho^T$ well defined (see eg the proof of \cite[Proposition 6.13]{Traces}). We also remark that $\varrho^T$ is a compactly supported, smooth, bi-$K_\infty$-invariant function, and that for $g \in G(\mathbb{R})^+$ it depends only on the hyperbolic distance between the points $g\cdot i$ and $i$.

Defining 
\begin{equation}\label{def:truncated-test-function}
    f_\infty^T:=f_\infty \cdot \varrho^T
\end{equation}
it has the same properties as $\varrho^T$, with the only exception being that it is bi-$\omega$-isotypic rather than bi-$K_\infty$-invariant. We apply the adelic pre-trace formula in Theorem \ref{thm:pretrace} for $f_\mathbb{A}^T=f_\infty^T \otimes f$, where $f \in \mathcal{H}$ satisfies the geometric and spectral assumptions. Letting $I(T)$ be the integral on either side of \eqref{eq:I-holomorphic} when the archimedean test function $f_\infty^T$, our goal is to evaluate 
\begin{equation*}
    \lim_{T \to \infty} I(T)
\end{equation*}
in two ways, given as before by the decomposition \eqref{eq:decomposition}.

\subsection{The image of the truncated operator}
In this section we describe the subspace of $L^2([G])$ that is not annihilated by $R(f_\mathbb{A}^T)$. Let $\mathcal{F}(T,f)$ the family of standard cuspidal automorphic representations $\pi$ for which $\pi(f_\mathbb{A}^T)$ is not the zero operator. If $\pi \in \mathcal{F}(T,f)$, it is clear that $\pi_\infty$ is either a principal series or the discrete series $\mathcal{D}(2)$. Thus we may rewrite $\mathcal{F}(T,f)$ 
as a union
\begin{equation}
\mathcal{F}(T,f) = \mathcal{F}_0(T,f)\cup\mathcal{F}_2(T,f)
\end{equation}
depending on the isomorphism class of $\pi_\infty$. We remark that $\mathcal{F}_2(T,f)$ is a finite family. We similarly define $\mathcal{F}_E(T,f)$, the set of characters in $(\mathbb{Q}^\times \backslash \mathbb{A}^1)^\wedge$ for which $\pi(f)$ is non-zero given $\pi\simeq\pi_{it}(\chi)$ for some $t \in \mathbb{R}$. For $\pi$ in $\mathcal{F}(T,f)$ or $\mathcal{F}_E(T,f)$, the operator $\pi(f_\mathbb{A}^T)$ acts by a scalar on $\pi^{\omega\times K_0(N)}$ and annihilates its orthocomplement. For $\varphi \in \pi^{\omega \times K_0(N)}$ with $\pi$ in either $\mathcal{F}(T,f)$ or $\mathcal{F}_E(f,T)$ we write
\begin{equation}\label{eq:T-eigenvalue}
    \pi(f_\mathbb{A}^T)\varphi = \lambda_T(\varphi) \varphi
\end{equation}

\noindent \emph{5.1.1 Eigenvalues for the discrete series}
Recall the subspace $V_f \subseteq V_\pi$ given in \eqref{eq:V_f}.
\begin{proposition}\label{prop:discrete-limit}
    Suppose $\pi \in \mathcal{F}_2(T,f)$ and $\varphi$ lies in the $\omega$-isotypic subspace of $V_f$, then
    \begin{equation}
        \lim_{T \to \infty}\lambda_T(\varphi)=1
    \end{equation}
\end{proposition}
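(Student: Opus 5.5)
Since $f_\mathbb{A}^T=f_\infty^T\otimes f$ is a pure tensor, the eigenvalue factors as a product of an archimedean and a non-archimedean scalar. Write $\varphi=\varphi_\infty\otimes\varphi_{\rm fin}$, with $\varphi_\infty$ the weight $2$ (lowest weight) vector $v_0$ of $\pi_\infty\simeq\mathcal{D}(2)$ and $\varphi_{\rm fin}\in V_f$. By the spectral assumption, $\pi_{\rm fin}(f)$ is an orthogonal projection onto $V_f$, so $\pi_{\rm fin}(f)\varphi_{\rm fin}=\varphi_{\rm fin}$. Hence
\[
\lambda_T(\varphi)=\frac{\langle \pi_\infty(f_\infty^T)v_0,v_0\rangle}{\langle v_0,v_0\rangle},
\]
and the problem reduces to a purely archimedean computation in $\mathcal{D}(2)$. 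Because $f_\infty^T$ is bi-$\omega$-isotypic, $\pi_\infty(f^T_\infty)$ preserves the one-dimensional weight $2$ line, so this scalar is well defined. We normalise $\|v_0\|=1$.

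Next we unfold the definition:
\[
\lambda_T=\int_{G(\mathbb{R})}f_\infty(g)\varrho^T(g)\langle\pi(g)v_0,v_0\rangle\,dg=\frac{1}{\|\Phi_{2,v_0}\|_{L^2}^2}\int_{G^+(\mathbb{R})}\varrho^T(g)\,|\Phi_{2,v_0}(g)|^2\,dg,
\]
using $f_\infty=\overline{\Phi_{2,v_0}}/\|\Phi_{2,v_0}\|^2$. The integrand is non-negative and $\varrho^T$ increases pointwise to the indicator of $G^+(\mathbb{R})$ as $T\to\infty$, since $\rho(r-T)\to 1$ for each fixed $r$ and $\rho$ takes values in $[0,1]$. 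The matrix coefficient $\Phi_{2,v_0}$ vanishes off $G^+(\mathbb{R})$ by \eqref{def:matrix-coeff}. By monotone (or dominated) convergence, using that $\Phi_{2,v_0}\in L^2(G(\mathbb{R}))$ (stated in \S2.1.1), the integral tends to $\|\Phi_{2,v_0}\|^2_{L^2}$, and therefore $\lambda_T\to 1$.

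The only delicate step is justifying the first displayed identity. Since $f_\infty^T$ is smooth and compactly supported, $\pi_\infty(f^T_\infty)$ is a bona fide bounded operator with
\[
\langle\pi(f^T_\infty)v_0,v_0\rangle=\int f^T_\infty(g)\langle\pi(g)v_0,v_0\rangle\,dg.
\]
We also need the inner product on $\mathcal{D}(2)$ used to define $\Phi_{2,v_0}$ to be the one inherited from $L^2([G])$ up to a scalar. This holds because the scalar cancels in $\langle\pi(g)v_0,v_0\rangle/\langle v_0,v_0\rangle$, and by uniqueness of invariant inner products on irreducible unitary representations. Beyond this, one only has to check the sign and conjugation convention for $\Phi$ versus $\overline{\Phi}$, which is what makes the integrand equal to $|\Phi|^2$ rather than something oscillatory. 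No growth estimate is needed, since non-negativity plus $L^2$-integrability suffices.
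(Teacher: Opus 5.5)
Your argument is correct and is essentially the paper's proof. Both reduce $\lambda_T(\varphi)$ to $\langle \pi_\infty(f_\infty^T)v_0,v_0\rangle=\|\Phi_{2,v_0}\|_{L^2}^{-2}\int \varrho^T|\Phi_{2,v_0}|^2\,dg$ and pass to the limit by dominated convergence, using $|f_\infty^T|\le|f_\infty|$ and $\Phi_{2,v_0}\in L^2(G(\mathbb{R}))$. The one thing to correct is the word ``increases'': $\rho$ is not assumed monotone, so $\varrho^T$ need not be monotone in $T$. This does not matter, since $0\le\varrho^T\le 1$ and $\varrho^T(g)=1$ for $g\in G^+(\mathbb{R})$ once $T\ge r(g\cdot i)$, so the dominated convergence you also invoke suffices.
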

\begin{proof}
    Let $v_0$ a normalised lowest weight vector for $\mathcal{D}(2)$ , then we have
    \begin{equation}
        \lambda_T(\varphi)=\langle \pi_\infty(f_\infty^T)v_0, v_0 \rangle= \int_{\overline{G}(\mathbb{R})}f_\infty^T(g)\langle \pi_\infty(g)v_0,v_0\rangle dg=||\Phi_{v_0,2}||_{L^2(G(\mathbb{R}))}^2\int_{\overline{G}(\mathbb{R})}f_\infty^T(g) \overline{f_\infty(g)}dg
    \end{equation}
    Because $\pi_\infty$ is square integrable and $|f_\infty^T(g)|\leq |f_\infty(g)|$, it is possible to interchange the limit and the integral via the dominated convergence theorem, obtaining the result.
\end{proof}

\noindent \emph{5.1.2: Eigenvalues for principal series}:
Let $\pi_\infty = \pi(\text{sgn}^\epsilon |\cdot |^{it},\text{sgn}^\epsilon |\cdot |^{-it})$ with $t \in \mathbb{R\cup} (-1/2,1/2)i$, we work in the induced model described at the beginning of Section 2.1.1, with $\phi_2$ the weight $2$ vector in $\pi$ normalised so that $\phi_2(1)=1$. As mentioned in Section 1.4.3, the measures are as in \cite[\S 7]{Traces}. The scalar $\lambda_T(\phi_2)$ can be computed as follows:
\begin{equation}\label{eq:expression-lambda-T-principal}
    \begin{split}
        \lambda_{T} (\phi_2)& = \pi_\infty(f_\infty^T)\phi_2(1)=\int_{G(\mathbb{R})}f_\infty^T(g)\pi_\infty(g)\phi_2(1)dg =\\
        & =\int_{0}^\infty \int_{-\infty}^{\infty}\int_0^{2\pi}f_\infty^T\left(\begin{pmatrix}
            y^{1/2} & xy^{-1/2} \\ 0 & y^{-1/2}
        \end{pmatrix}k_\theta\right)\phi_2\left(\begin{pmatrix}
            y^{1/2} & xy^{-1/2} \\ 0 & y^{-1/2}
        \end{pmatrix}k_\theta)\right )\frac{d\theta}{2\pi}\frac{dxdy}{y^2}=\\
        & =\int_{0}^\infty \int_{-\infty}^{\infty}f_\infty^T\left(\begin{pmatrix}
            y^{1/2} & xy^{-1/2} \\ 0 & y^{-1/2}
        \end{pmatrix} \right)y^{it+1/2}\frac{dxdy}{y^2}
    \end{split}
\end{equation}
using that $\phi_2$ and $f_\infty$ have opposite weight on the right.
By above $\lambda_T(\phi_2)$ depends on $\phi_2$ only via the spectral parameter $t$, therefore we rewrite for convenience $\lambda_{T}(t):=\lambda_T(\phi_2)$.

\bigbreak For $z \in \mathbb{H}$, let $r(z)$ denote the hyperbolic distance between $i$ and $z$, and set
$$r_T(z)=r(z)-T$$
Note that $r(z)$ is given by
$$r(z) = \log\frac{|z+i|+|z-i|}{|z+i|-|z-i|}$$
By construction of $\varrho^T$ (see \eqref{eq:smooth-truncation}), then the integral can be rewritten as 
\begin{equation}\label{eq:eigenvalue}
    \lambda_{T}(t)=\frac{\kappa-1}{4\pi}(2i)^\kappa\int_{0}^{\infty}\int_{-\infty}^{\infty}\frac{\rho(r_T(z))}{(x+i(y+1))^2}y^{it-1/2}dxdy
\end{equation}
\begin{lemma}\label{lemma:derivativebound}
    Let $m\geq 1$ be a positive integer, then for $T\geq 10$ say,
    \begin{equation}\Bigr|\frac{\partial^m}{\partial y^m}\rho(r_T(z))\Bigr|\ll y^{-m}
    \end{equation}
    where $z=x+iy \in\mathbb{H} $ and the implicit constant depends only on $m$ and $\rho$.
\end{lemma}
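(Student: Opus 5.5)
The plan is to write $r(z)$ as a composition of one-variable functions and apply the Faà di Bruno formula twice, using that all derivatives of $\rho$ vanish outside a compact set. First, instead of the logarithmic formula for $r(z)$, I will use the standard identity
$$\cosh r(z) = 1+\frac{|z-i|^2}{2y} = \frac{x^2+y^2+1}{2y} =: u(x,y),$$
so that $r(z)=\operatorname{arccosh}(u)$ with $u\geq 1$.

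Second, I bound the $y$-derivatives of $u$. Write $u=\frac{x^2+1}{2y}+\frac{y}{2}$. For $k\geq 1$,
$$\partial_y^k u = (-1)^k k!\,\frac{x^2+1}{2y^{k+1}} + \delta_{k=1}\tfrac12 .$$
Since both $\frac{x^2+1}{2y}$ and $\frac y2$ are at most $u$, this gives $|\partial_y^k u|\ll_k u\,y^{-k}$ for all $k\geq 1$, uniformly in $x$.

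Next, I localise. Since $\rho$ is constant on $(-\infty,0]$ and on $[1,\infty)$, every derivative $\rho^{(j)}$ with $j\geq 1$ is supported in $[0,1]$. By the chain rule, each term of $\partial_y^m\rho(r_T(z))$ with $m\geq1$ contains such a factor $\rho^{(j)}(r(z)-T)$. So it suffices to prove the bound on the region $T\leq r(z)\leq T+1$. There $u\geq\cosh 10$, hence $u^2-1\asymp u^2$. For $j\geq1$ the $j$-th derivative of $\operatorname{arccosh}$ is $(u^2-1)^{-1/2}$ differentiated $j-1$ times, and it satisfies $\operatorname{arccosh}^{(j)}(u)\ll_j u^{-j}$ for $u\geq\cosh 10$, by induction.

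Now I apply Faà di Bruno to $r=\operatorname{arccosh}\circ u$. The $m$-th derivative is a finite sum of terms
$$\operatorname{arccosh}^{(j)}(u)\prod_{i=1}^j\partial_y^{k_i}u,\qquad \sum_i k_i=m .$$
Each such term is
$$\ll u^{-j}\prod_{i=1}^j u\,y^{-k_i}=y^{-m}.$$
Hence $|\partial_y^k r(z)|\ll_k y^{-k}$ on the region $T\leq r(z)\leq T+1$, for every $k\geq1$.

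Applying Faà di Bruno once more to $\rho(r-T)$ gives terms $\rho^{(j)}(r_T)\prod_i\partial_y^{k_i}r$. Each is $\ll_{\rho,m} y^{-m}$, because $\rho^{(j)}$ is bounded. The resulting constant depends only on $m$ and $\rho$, not on $T$ or $x$.

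The only delicate point is the singularity of $\operatorname{arccosh}$ at $u=1$, where its derivatives blow up. This is exactly why the hypothesis $T\geq10$ and the localisation to the support of $\rho'$ are needed: together they keep $u$ bounded away from $1$, so that $\operatorname{arccosh}^{(j)}(u)\ll u^{-j}$ holds uniformly. The rest is bookkeeping.
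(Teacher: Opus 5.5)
Your proof is correct. It has the same outer structure as the paper's argument: the chain rule for $\rho\circ r_T$, localisation to the support of $\rho^{(j)}$ for $j\geq 1$ (which keeps $z$ away from $i$), and the key estimate $\partial_y^a r(z)\ll_a y^{-a}$ there. Where you differ is in how you obtain that key estimate.

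The paper proves by induction on $a$ an explicit closed form
$r^{(a)}(z)=P_a(x,y)\,y^{-a}\bigl((x^2+y^2+1)^2-4y^2\bigr)^{1/2-a}$ with $\deg P_a\leq 4a-2$. It then bounds this by comparing polynomial degrees against $(x^2+y^2+1)^{2a-1}$, with a case split $|x|\geq y$ versus $|x|\leq y$. It also uses that $1-4y^2/(x^2+y^2+1)^2=1-u^{-2}$ is bounded below on the support.

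You instead factor $r=\operatorname{arccosh}\circ u$ with $u=\cosh r=(x^2+y^2+1)/(2y)$, and apply Fa\`a di Bruno a second time. Since $u$ has only two terms as a function of $y$, the bound $|\partial_y^k u|\ll_k u\,y^{-k}$ is immediate and uniform in $x$. It cancels exactly against $\operatorname{arccosh}^{(j)}(u)\ll_j u^{-j}$.

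Your route avoids the two-variable degree bookkeeping and the case analysis in $x$. That case analysis is stated somewhat loosely in the paper: for small $|x|\geq y$ one needs $|P_a(x,y)|\ll(1+|x|)^{4a-2}$ rather than $\ll|x|^{4a-2}$. Your route also makes clear that the $y^{-m}$ decay is a homogeneity effect. The paper's route, in exchange, gives an explicit formula for $r^{(a)}$ that records its full dependence on $x$ and $y$.
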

\begin{proof}
    For sake of notation, we take $r^{(m)}(z)$ to denote the $m$-th derivative with respect to $y$. By the generalised chain rule 
    $$\frac{\partial^m}{\partial y^m}\rho(r_T(z))=\sum_{j=1}^{m}\sum_{\substack{a_1+...+a_j=m \\m\geq a_1\geq ... \geq a_j \geq 1 }}A_{a_1,...,a_j}\rho^{(m)}(r(z))\ r^{(a_1)}(z)...r^{(a_j)}(z)$$
    where $A_{a_1,...a_j}$ are some non-negative constants. We claim that for $a>0$
    $$r^{(a)}(z)=\frac{P_a(x,y)}{y^a((x^2+y^2+1)^2-4y^2)^{a-1/2}}$$
    with $P_a(z)$ a polynomial of total degree $\leq 4a-2$.
    As the base case
    $$r^{(1)}(z)=\frac{y^2-x^2-1}{y((x^2+y^2+1)^2-4y^2)^{1/2}}$$
    So suppose our claim holds for some $a$, then by the product rule
    \begin{align*}
            r^{(a+1)}(z)  = & -\frac{aP_a(x,y)}{y^{a+1}((x^2+y^2+1)^2-4y^2)^{a-1/2}}+\frac{P^{(1)}(x,y)}{y^{a}((x^2+y^2+1)^2-4y^2)^{a-1/2}} \\
            & \hspace{1cm} -\frac{2(2a-1)y(y^2+x^2-1)P_a(x,y)}{y^a((x^2+y^2+1)^2-4y^2)^{a+1/2}}
    \end{align*}
    Collecting terms, the denominator has the desired form, and the numerator is
    \begin{align*}
        -a((x^2+y^2+1)^2-4y^2)P_{a}(x,y)+y((x^2+y^2+1)^2-4y^2)& P_a^{(1)}(x,y) \\ 
        & -2(2a-1)y^2(x^2+y^2-1)P_a(x,y)
    \end{align*}
    whose degree satisfies the desired bound, thus the claim.\\
    Rearranging
    $$r^{(a)}(z)=\frac{1}{y^a}\frac{P_a(x,y)}{(x^2+y^2+1)^{2a-1}}\left(1-\frac{4y^2}{(x^2+y^2+1)^2}\right)^{1/2-a}$$
    By the support of the derivatives of $\rho$, we may assume $x+iy$ lies outside a small neighborhood of $i$, thus the third factor is $\ll_a 1$. For the second factor, if $|x|\geq y$ then $|P_a(x,y)|\ll_a |x|^{4a-2}$ and 
    $$\frac{|x|^{4a-2}}{(x^2+1)^{2a-1}}\leq 1$$
    A similar result holds for $|x| \leq y$, thus proving the lemma.
\end{proof}

\begin{proposition} \label{prop:polydecay}
    Let $\lambda_T(t):=\lambda_T(\phi_2)$ as below equation \eqref{eq:expression-lambda-T-principal}, with $\phi_2 \in \pi(\text{sgn}^\epsilon |\cdot|^{it},\text{sgn}^\epsilon|\cdot|^{-it})$. Fix $A\geq1$ be a positive integer, then
    \begin{equation}|\lambda_{T}(t)|\ll_A\frac{1}{(|t|+1)^A}M_A(T)
    \end{equation}
    for some function $M_A$ such that $M_A(T)\to 0$ as $T\to \infty$.
\end{proposition}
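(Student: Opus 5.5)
We start from the explicit expression \eqref{eq:eigenvalue},
\[
\lambda_T(t)=\frac{\kappa-1}{4\pi}(2i)^\kappa\int_0^\infty\!\!\int_{-\infty}^{\infty}\frac{\rho(r_T(z))}{(x+i(y+1))^2}\,y^{it-1/2}\,dx\,dy ,
\]
with $\kappa=2$. The plan is to obtain decay in $t$ by integrating by parts in $y$ via the Mellin-type identity $y^{it-1/2}=\frac{d}{dy}\bigl(y^{it+1/2}\bigr)/(it+1/2)$. Each step gains a factor $(|t|+1)^{-1}$ and costs one $y$-derivative on $F_T(x,y):=\rho(r_T(z))(x+i(y+1))^{-2}$. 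To keep the weight homogeneous, we work with the Euler operator $y\partial_y$. After $A$ integrations by parts,
\[
\lambda_T(t)=\frac{c}{\prod_{j=1}^{A}(it+\tfrac12+\text{shift}_j)}\iint (y\partial_y)^A\text{-type derivatives of }F_T\cdot y^{it-1/2}\,dx\,dy .
\]
More concretely, we write $y^{it-1/2}=(it-\tfrac12+1)^{-1}\partial_y(y^{it+1/2})$, move $\partial_y$ onto $F_T$, and then rewrite $\partial_yF_T\cdot y^{it+1/2}=(y\partial_yF_T)\,y^{it-1/2}$. This recovers the same power of $y$, so the step can be iterated $A$ times. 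Boundary terms vanish because $F_T$ is compactly supported in $\mathbb{H}$: $\rho(r_T)=0$ when $r(z)\ge T+1$, so $y$ is bounded away from $0$ and $\infty$ on the support.

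By the Leibniz rule, $(y\partial_y)^AF_T$ is a combination of terms $y^{j}\partial_y^{j}\rho(r_T(z))\cdot y^{l}\partial_y^{l}(x+i(y+1))^{-2}$. Lemma \ref{lemma:derivativebound} gives $|y^{j}\partial_y^{j}\rho(r_T)|\ll 1$ for $j\ge1$, and these terms are supported on the annulus $T\le r(z)\le T+1$. Also $|y^l\partial_y^l(x+i(y+1))^{-2}|\ll_l |x+i(y+1)|^{-2}$, since $y\le |x+i(y+1)|$. Hence every term is dominated by $|x+i(y+1)|^{-2}y^{-1/2}$ times the indicator of either $\{r(z)\le T+1\}$ (the $j=0$ terms) or $\{T\le r(z)\le T+1\}$ (the $j\ge1$ terms).

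It remains to choose $M_A(T)$ and show it tends to $0$. The $j\ge1$ terms are fine: we bound them by
\[
\iint_{T\le r(z)\le T+1}\frac{y^{-1/2}}{x^2+(y+1)^2}\,dx\,dy .
\]
Since the function $|x+i(y+1)|^{-2}y^{-1/2}$ is integrable on $\mathbb{H}$ (near $y=0$ it is $y^{-1/2}$; at infinity it is like $R^{-5/2}\cdot R$), and the annuli escape every compact set, this tends to $0$ by dominated convergence.

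The main obstacle is the $j=0$ term, $\iint \rho(r_T)\,y^{l}\partial_y^{l}(x+i(y+1))^{-2}\,y^{it-1/2}$. For this term the support does not shrink, so smallness must come from elsewhere. I would handle it by using the holomorphy in $z$ rather than only $y$-derivatives. For $l\ge0$, write $y^l\partial_y^l(x+i(y+1))^{-2}=c_l\,y^l(x+i(y+1))^{-2-l}$ and integrate over $x$ first, over the full line. The full $x$-integral of $(x+i(y+1))^{-2-l}$ vanishes by a contour shift into the lower half plane: the pole lies at $x=-i(y+1)$ and the integrand decays at least like $|x|^{-2}$. So the $j=0$ term equals minus the integral of $(1-\rho(r_T))\,c_l y^l(x+i(y+1))^{-2-l}y^{it-1/2}$ over $\{r(z)\ge T\}$. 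This region also escapes compacta, and the integrand is dominated by the same integrable majorant, so the term tends to $0$. Taking $M_A(T)$ to be the sum of these finitely many tail integrals, which is independent of $t$, gives $|\lambda_T(t)|\ll_A (|t|+1)^{-A}M_A(T)$ with $M_A(T)\to0$. For $t\in(-1/2,1/2)i$ the same argument applies: the factors $|it+1/2+j|$ are bounded below, and $y^{\pm|\mathrm{Im}\,t|-1/2}$ remains integrable against the majorant. Checking this last integrability near $y=0$ and at infinity for $|\mathrm{Im}\,t|<1/2$ is where I would be careful.
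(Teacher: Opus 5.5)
Your proof is correct and is essentially the paper's argument. It uses the same ingredients: $A$-fold integration by parts in $y$ to gain $(|t|+1)^{-A}$, Lemma \ref{lemma:derivativebound} for the derivatives of the cutoff, and the vanishing of $\int_{\mathbb{R}}(x+i(1+y))^{-n}\,dx$ for $n\ge 2$ to handle the terms whose support does not shrink. The difference is only in organisation: you make the cancellation explicit by subtracting the untruncated integrand and bounding a $t$-independent tail over $\{r(z)\ge T\}$, whereas the paper keeps the absolute value outside the $x$-integral and applies dominated convergence twice.

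As you suspected, for $t\in(-1/2,1/2)i$ the bound is uniform only if $|\mathrm{Im}\,t|\le\theta<1/2$. Both the factor $it+\tfrac12$ and the integrability of $y^{\pm|\mathrm{Im}\,t|-1/2}/(1+y)$ degenerate as $|\mathrm{Im}\,t|\to 1/2$. This is why the paper restricts to spectral parameters of the cuspidal spectrum, with $\theta=3/16$.
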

\begin{proof}
    Integrating by parts the integral expression \eqref{eq:eigenvalue} for $\lambda_{T}(t)$ we obtain
    $$\lambda_{T}(t)=\prod_{j=1}^{A}\frac{1}{s-1/2+j}\int_{0}^{\infty}\int_{-\infty}^{\infty}\frac{\partial^A}{\partial y^A}\frac{\rho(r_T(z))}{(x+i(1+y))^2}dx \ y^{it-1/2+A}dy$$
    where exchanging integral and the partial derivative is justified by smoothness and compact support of $f_\infty^T$.
    Therefore we have
    \begin{equation} \label{eq:function-tends-to-zero}
    \lambda_{T}(t)\ll\frac{1}{(|t|+1)^A}\int_{0}^{\infty}\Bigr|\int_{-\infty}^{\infty}\frac{\partial ^A}{\partial y^A}\frac{\rho(r_T(z))}{(x+i(1+y))^2}dx\Bigr|y^{\sigma-1/2+A}dy
    \end{equation}
    where $\sigma$ is the maximum $\text{Re}(it)$ for $t$ ranging over the spectral parameters occurring in the decomposition of $L_0^2([G])$ into cuspidal automorphic representations, and so can be taken as $\sigma=3/16$ (see e.g \cite[Theorem 4]{Deshouillers}). Take $M_A(T)$ to be the double integral in \eqref{eq:function-tends-to-zero}. We now show that $\lim_{T\to 0}M_A(T)=0$; this is accomplished by exchanging twice limit and integration via dominated convergence. By the generalised product rule
    \begin{equation}\label{eq:derivative-bound}
        \begin{split}
            \frac{\partial^A}{\partial y^A}\frac{\rho(r_T(z))}{(x+i(1+y))^2} & =\sum_{j=0}^{A}\binom{A}{j}\frac{\partial ^j}{\partial y^j}\rho(r_T(z))\frac{i^{A-j}}{(x+i(1+y))^{2+A-j}}\\
            & \ll_A y^{-A}\frac{1}{x^2+(1+y)^2}      
        \end{split}
    \end{equation}
    using Lemma \ref{lemma:derivativebound}. The modulus of the $y$-integrand is majorised (up to a constant dependent on $A$) by 
    $$\int_{-\infty}^{\infty}\frac{1}{x^2+(1+y)^2}dx\ y^{\sigma-1/2}=\frac{y^{\sigma-1/2}}{1+y}$$
    which is an integrable function on $(0,\infty)$, we can exchange the limit and the integral with respect to the variable $y$. It now suffices to show that
    $$\lim_{T\to \infty}\int_{-\infty}^{\infty}\frac{\partial ^A}{\partial y^A}\frac{\rho(r_T(z))}{(x+i(1+y))^2}dx=0$$
    Again the integrand is majorised (up to $A$-dependency) in absolute value by the same bound as in \eqref{eq:derivative-bound}.
    By construction $\rho(r_T(z))\to1$ pointwise and its derivatives $\rho^{(k)}(r_T(z))\to 0$ pointwise as $T\to \infty$ for $k \geq 1$. The result follows from noting that for $y > 0$
    $$\int_{-\infty}^{\infty}\frac{dx}{(x+i(1+y))^{2+A}}=0$$
    by applying Cauchy theorem to a semicircle in the upper half plane with radius approaching infinity.
\end{proof}
We remark that significantly better bounds than $3/16$ are currently known, but this weaker result is sufficient in our context. 

\subsection{Limiting argument for the spectral side}
We let again $\phi_2$ be the weight $2$ vector in some principal series $\pi(\text{sgn}^\epsilon|\cdot|^{it},\text{sgn}^\epsilon|\cdot|^{-it})$ for $\epsilon \in\{0,1\}$ and $t \in \mathbb{R}\cup(-\theta,\theta)i$ where we may take $\theta=3/16$ as in the proof of Proposition \ref{prop:polydecay}. Taking the evident weight two vector in the induced model and moving it to the Whittaker model using \cite[(3.10)]{Michel_2010} we obtain
\begin{equation}
    W_{\phi_2}(a(y))=\frac{(\text{sgn} \ y)^\epsilon}{\sqrt{\pi}}|y|^{1/2-s}\int_{-\infty}^{\infty}\frac{(x+i)^2}{(1+x^2)^{s+3/2}}e(yx)dx
\end{equation}
This integral is absolutely convergent only for $\text{Re}(s)>0$, but it admits a meromorphic continuation to the complex plane. We may integrate by parts once to obtain an integral expression that converges absolutely for $\text{Re}(s)>-1$, namely
\begin{align*}
    W_{\phi_2}(a(y))=& 2\frac{(\text{sgn} \ y)^\epsilon}{\sqrt{\pi}}|y|^{1/2-s}\frac{1}{2\pi i y} \\ & \qquad \times \left[\int_{-\infty}^{\infty}\frac{x+i}{(1+x^2)^{s+3/2}}e(yx)dx  
     -(s+3/2)\int_{-\infty}^{\infty}\frac{x(x+i)^2}{(1+x^2)^{s+5/2}}e(yx)dx \right]
\end{align*}
A straightforward computation then shows that
\begin{equation}\label{eq:whittaker-bound}
    W_{\phi_2}(a(y))\ll |y|^{-1/2+\theta}(1+2|t|)
\end{equation}
where the implicit constant is absolute. We may thus obtain the following estimates:

\begin{proposition}\label{prop:whittaker-growth-2}
    Fix $T \geq 1$ and $m \in \mathbb{Z}$. Then for $K \in \mathbb{R}_+$ sufficiently large
    \begin{equation}
        \sum_{\substack{\pi \in \mathcal{F}_{0}(T,f)\\ |t_\pi|\leq K}}\sum_{\phi \in \mathcal{B}(\pi)}|W_\phi(a(-m))|^2 \ll K^{6}
    \end{equation}
    where $t_\pi$ is the spectral parameter of $\pi$, and similarly
    \begin{equation}
        \sum_{\substack{\chi \in \mathcal{F}_E}(T,f)}\sum_{\phi \in \mathcal{B}(\chi,\chi^{-1})}\int_{-K}^K |W_{E(\phi_{it})}(a(-m))|^2 dt \ll K^{3+\epsilon}
    \end{equation}
    In both cases, the implicit constants depend on $m,N, \theta$ and $\epsilon$. 
\end{proposition}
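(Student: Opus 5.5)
The plan is to reduce both estimates to the classical bounds already recorded in Section 2.4: the weak Weyl law (Proposition \ref{prop:weyl-law}) and the coefficient bound behind Lemma \ref{lemma:maass-growth}. The link is the weight-two Whittaker bound \eqref{eq:whittaker-bound}.

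\textbf{Cuspidal sum.} Each $\phi\in\mathcal{B}(\pi)$ contributing to the sum lies in $\pi^{\omega\times K_0(N)}$, since $\pi(f_\mathbb{A}^T)$ kills the orthocomplement. Such a $\phi$ factors as a weight two archimedean vector tensored with a $K_0(N)$-fixed vector at the finite places. Write $\phi_2=R_+\phi_0$, where $\phi_0$ is the corresponding spherical vector and $R_+$ is the weight raising operator, suitably normalised. The Whittaker function of $\phi$ at $a(-m)$ is then the local Whittaker function $W_{\phi_2}(a(-m))$, multiplied by the finite-adelic Whittaker value and divided by the archimedean spherical Whittaker value.

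In Maass language I would proceed as follows. For each $\pi\in\mathcal{F}_0(T,f)$ the space $\pi^{\omega\times K_0(N)}$ has dimension $\ll_N 1$ (bounded by oldform multiplicities). For an orthonormal weight two $\phi$, the ratio of Whittaker functions shows
\[
|W_\phi(a(-m))|\ll |a_{u}(m)|\,\frac{|W_{\phi_2}(a(-m))|}{\|\phi_2\|}\ll_{m,\theta}|a_u(m)|(1+|t_\pi|),
\]
where $u$ is the associated $L^2$-normalised weight zero Maass form on $\Gamma_0(N)$. Here I use \eqref{eq:whittaker-bound}, together with the fact that the induced-model norm of $\phi_2$ equals $1$ because $\phi_2(1)=1$. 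Summing and applying Lemma \ref{lemma:maass-growth} with an extra factor $(1+K)^2$ gives $\ll K^3\cdot K^2$. If the relative normalisation of the Whittaker function (the $\cosh(\pi t)$ versus $\Gamma$-factor mismatch) costs another power of $K$, the bound is still $\ll K^6$, which is why the statement has room to spare. The main obstacle is exactly this normalisation: comparing the weight two Whittaker function, computed via \cite[(3.10)]{Michel_2010}, with the $K$-Bessel normalisation \eqref{def:maasscoefficients}. I would do this through the ratio of local Whittaker functionals applied to $\phi_2$ and $\phi_0$, which is a ratio of Gamma factors $\Gamma(s+\tfrac12\pm\ldots)$. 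By Stirling this ratio is polynomially bounded in $|t|$, and uniformly so for $t\in(-\theta,\theta)i$.

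\textbf{Continuous sum.} The same reduction applies to $E(\phi_{it})$. Only the finitely many $\chi$ of conductor dividing $N$ occur, because $K_0(N)$-invariance forces this. The weight zero Eisenstein coefficients are $\lambda_{\pi_{it}(\chi)}(m)$ times normalising factors $\ll|L(1+2it,\chi^2)|^{-1}$. Since $|L(1+2it,\chi^2)|^{-1}\ll\log(2+|t|)$ and the divisor-type sum is bounded, each $|W_{E(\phi_{it})}(a(-m))|^2\ll_{m,N,\epsilon}(1+|t|)^{2+\epsilon}$ after incorporating the factor from \eqref{eq:whittaker-bound}. Integrating over $[-K,K]$ then gives $K^{3+\epsilon}$.
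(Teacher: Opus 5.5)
Your proposal is correct and follows essentially the paper's own argument: the paper likewise uses \eqref{eq:whittaker-bound} to reduce to $(1+|t_\pi|)^2\sum_{\varphi}|a_{u_\varphi}(m)|^2$ and then applies Lemma \ref{lemma:maass-growth}, and for the Eisenstein part combines the finiteness of $\mathcal{F}_E(T,f)$ with a logarithmic bound on the weight zero Eisenstein coefficients (the paper quotes this from \cite[\S 7.4]{KL-kuznetsov}, while you derive it from $|L(1+2it,\chi^2)|^{-1}\ll\log(2+|t|)$). Your normalisation worry is harmless: up to a factor of bounded modulus, the spherical Whittaker function of $\phi_0$ with $\phi_0(1)=1$ is $|y|^{1/2}K_{it}(2\pi|y|)/\Gamma(\tfrac12+it)$, and $|\Gamma(\tfrac12+it)|^{-1}=(\cosh(\pi t)/\pi)^{1/2}$ for real $t$, so it matches the $W(y)$ of \eqref{def:maasscoefficients} with no loss in $t$ and you in fact get $K^5$.
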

\begin{proof}
    Using the inequality \eqref{eq:whittaker-bound}, we may bound
    $$ \sum_{\substack{\pi \in \mathcal{F}_{0}(T,f)\\ |t_\pi|\leq K}}\sum_{\phi \in \mathcal{B}(\pi)}|W_\phi(a(-m))|^2 \ll_m\sum _{\substack{\pi \in \mathcal{F}_{0}(T,f)\\ |t_\pi|\leq K}}(1+|t_\pi|)^2\sum_{\varphi \in \mathcal{B}(\pi)}|a_{u_\varphi}(m)|^2$$
    hence the result follows from Lemma \ref{lemma:maass-growth}. The bound for the Eisenstein series is similarly proven: the family $\mathcal{F}_E(f)$, and by extension $\mathcal{F}_E(T,f)$, are bounded in terms of $N$. Moreover, using e.g. (\cite{KL-kuznetsov}, \S 7.4) we have the (very weak) bound
    \begin{equation*}
        a_{u_{E(\phi_{it})}}(m)\ll_{m,N,\epsilon} \log(3+2|t|)^7
    \end{equation*}
    so that $|W_{E(\phi_{it})}(a(-m))|\ll_{m,N,\epsilon} (1+|t|)^{1+\epsilon}$.
\end{proof}

We are now ready to prove the main result of this section, in which we recover the spectral side of the equation for $\kappa=2$.
\begin{proposition}\label{prop:spectral-limit}
    As $T \to \infty$, the behavior of the spectral side of is as follows
    \begin{equation}\label{eq:truncated-cusp}
        \lim_{T \to \infty} I_{\operatorname{cusp}}(T)=(4\pi)^2\sqrt{m_1 m_2}e^{-2\pi(m_1+m_2)} \sum_{\pi \in \mathcal{F}_2(f)}\sum_{\varphi \in \mathcal{B}(\pi)} a_{u_\varphi}(m_1)\overline{a_{u_\varphi}(m_2)}
    \end{equation}
    For the residual and continuous contribution instead
    \begin{equation}
        \lim_{T \to \infty} I_{\operatorname{res}}(T)=\lim_{T \to \infty}I_{\operatorname{cts}}(T)=0
    \end{equation}
\end{proposition}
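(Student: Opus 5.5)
We write $I_{\operatorname{cusp}}(T)$, $I_{\operatorname{res}}(T)$ and $I_{\operatorname{cts}}(T)$ for the three spectral contributions attached to $f_\mathbb{A}^T$. By Theorem \ref{thm:pretrace} and Fubini on the compact set $[N]^2$, each is obtained by integrating the corresponding kernel against $\overline{\theta_{m_1}}\theta_{m_2}$. Since $R(f_\mathbb{A}^T)$ acts on $\pi^{\omega\times K_0(N)}$ by the scalar \eqref{eq:T-eigenvalue} and kills its orthocomplement, we get
\begin{equation*}
I_{\operatorname{cusp}}(T)=\sum_{\pi\in\mathcal{F}_2(T,f)}\sum_{\varphi}\lambda_T(\varphi)W_\varphi(a(-m_1))\overline{W_\varphi(a(-m_2))}+\sum_{\pi\in\mathcal{F}_0(T,f)}\sum_{\phi}\lambda_T(t_\pi)W_\phi(a(-m_1))\overline{W_\phi(a(-m_2))}.
\end{equation*}
We treat the two families separately.

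\textbf{Discrete series part.} The family $\mathcal{F}_2(T,f)$ is finite and contained in $\mathcal{F}_2(f)$. The finite part of $\pi(f)$ is the projection onto $V_f$ by the spectral assumption, so only $\varphi\in V_f^{\omega}$ contribute. For these, Proposition \ref{prop:discrete-limit} gives $\lambda_T(\varphi)\to 1$. We then convert Whittaker coefficients into Fourier coefficients using \eqref{eq:whittaker-fourier} and \eqref{def:holomorphic-coefficients} with $\kappa=2$, $y=1$. This gives the factor $W(m_1)\overline{W(m_2)}/\sqrt{m_1m_2}=(4\pi)^2\sqrt{m_1m_2}e^{-2\pi(m_1+m_2)}$, which is the right side of \eqref{eq:truncated-cusp}. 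Here the sum over $\mathcal{B}(\pi)$ is interpreted as the $\omega$-isotypic part of $V_f$.

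\textbf{Maass part.} This is the main obstacle, since the family $\mathcal{F}_0(T,f)$ is infinite and we need the limit uniformly over it. The eigenvalue depends only on $t_\pi$, so we apply Proposition \ref{prop:polydecay} with a large $A$ (say $A=8$), giving $|\lambda_T(t_\pi)|\ll (1+|t_\pi|)^{-A}M_A(T)$. By Cauchy--Schwarz,
\begin{equation*}
\Bigl|\sum_{\pi\in\mathcal{F}_0(T,f)}\lambda_T(t_\pi)\cdots\Bigr|\ll M_A(T)\Bigl(\sum_{\pi,\phi}(1+|t_\pi|)^{-A}|W_\phi(a(-m_1))|^2\Bigr)^{1/2}\Bigl(\sum_{\pi,\phi}(1+|t_\pi|)^{-A}|W_\phi(a(-m_2))|^2\Bigr)^{1/2}.
\end{equation*}
To bound the bracketed sums independently of $T$, we split into dyadic ranges $|t_\pi|\in[K,2K]$ and apply Proposition \ref{prop:whittaker-growth-2}. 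This is possible because its implied constants depend only on $m,N,\theta$, and the relevant sets are contained in fixed families of level $N$ forms, so the bound is uniform in $T$. Each dyadic range contributes $\ll K^{6-A}$, which is summable for $A>6$. The exceptional spectrum with $t_\pi\in(-\theta,\theta)i$ is finite and is covered by the same bound via \eqref{eq:whittaker-bound}. Hence this part is $O(M_A(T))\to0$.

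\textbf{Residual and continuous parts.} $I_{\operatorname{res}}(T)=0$ for every $T$, because $m_1,m_2\neq0$ make the character integrals over $\mathbb{Q}\backslash\mathbb{A}$ vanish, exactly as in Section 3.2.2. For $I_{\operatorname{cts}}(T)$, the family $\mathcal{F}_E(T,f)$ is finite and bounded in terms of $N$. The integrand carries $\lambda_T(t)$ with $t$ real, so Proposition \ref{prop:polydecay} applies to it verbatim. We repeat the Cauchy--Schwarz argument using the second bound of Proposition \ref{prop:whittaker-growth-2}, which gives $K^{3+\epsilon}$ on $[-K,K]$, with $A\ge 5$ and dyadic decomposition in $t$. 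This yields $|I_{\operatorname{cts}}(T)|\ll M_A(T)\to0$.
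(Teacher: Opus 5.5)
Your proposal is correct and follows the paper's proof. You split $I_{\operatorname{cusp}}(T)$ by infinity type, apply Proposition \ref{prop:discrete-limit} to the finite $\mathcal{D}(2)$ part, and combine Proposition \ref{prop:polydecay} (with $A$ large) with Proposition \ref{prop:whittaker-growth-2} for the Maass and Eisenstein parts; your Cauchy--Schwarz and dyadic bookkeeping just makes explicit the uniformity in $T$ that the paper leaves implicit, and your vanishing argument for $I_{\operatorname{res}}(T)$ (nontrivial character integrals over $[N]$) is an equally valid alternative to the paper's (the residual space is $K_\infty$-invariant, hence killed by the $\omega$-isotypic $f_\infty^T$).
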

\begin{proof}
    For all $T\geq 1$, $I_\text{res}(T)=0$ since the residual space is spanned by $K_\infty$-invariant vectors. We might split $I_\text{cusp}(T)$ depending on the infinity type, so
    \begin{equation}
        \begin{split}
        I_\text{cusp}(T)  = &(4\pi)^2\sqrt{m_1m_2}e^{-2\pi(m_1+m_2)}\sum_{\pi \in \mathcal{F}_2(T,f)}\sum_{\varphi \in \mathcal{B}(\pi)}\lambda_{T}(\varphi)a_{u_\varphi}(m_1)\overline{a_{u_\varphi}(m_2)}\\ & + \sum_{\pi \in \mathcal{F}_0(T,f)}\lambda_{T}(t_\pi)\sum_{\varphi \in \mathcal{B}(\pi)}W_{\varphi}(a(-m_1))\overline{W_\varphi(a(-m_2))}
        \end{split}
    \end{equation}
    Choosing $A$ sufficiently large in Proposition \ref{prop:polydecay}, using Proposition \ref{prop:whittaker-growth-2}, the second line in vanishes as $T \to \infty$. The first line tends to the right hand side of \eqref{eq:truncated-cusp} by Proposition \ref{prop:discrete-limit}. Similarly 
    \begin{equation}
        I_\text{cts}(T)=\sum_{\chi \in \mathcal{F}_E(T,f)}\sum_{\phi \in \mathcal{B}(\chi,\chi^{-1})}\int_{-\infty}^{\infty}\lambda_T(t)W_{E(\phi_{it})}(-m_1)\overline{W_{E(\phi_{it})}(-m_2)}dt
    \end{equation}
    and given that the two outer sums are finite, by taking $k$ sufficiently large we see that this term also vanishes as $T \to \infty$.
\end{proof}

\subsection{Limiting argument for the geometric side}
We now apply a similar limiting argument as $T \to \infty$ to recover the geometric side. Again we consider $I_\delta(f_\mathbb{A}^T)$ (see \eqref{eq:orbit-geom-integral}) where $\delta$ is a first or second cell term given in \eqref{def:cells}. As the truncation only affects the archimedean component, the discussion of $I_\delta(f)$ is exactly the same as in Section 3. \bigbreak 
\noindent \emph{5.3.1 First cell term}: Unlike the BK formula for opposite signs, the diagonal contribution on the geometric side does not always vanish. We will see that although $f_\infty$ is not integrable, upon restricting the integration to the upper triangular matrices with fixed diagonal entries we get the following
\begin{lemma}
Let $m_1, m_2 \in \mathbb{Q}_+$, and let $\delta = \left(\begin{smallmatrix}
    m_1/m_2 & 0 \\ 0 & 1
\end{smallmatrix}\right) \in G(\mathbb{Q})$,  then 
\begin{equation}
    \lim_{T \to \infty}I_\delta(f_\infty^T)= 4\pi(m_1m_2)e^{-2\pi(m_1+m_2)}
\end{equation}
\end{lemma}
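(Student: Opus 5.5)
The plan is to reduce the first cell orbital integral at $\infty$ to a one-dimensional integral over the unipotent radical. That integral converges absolutely even for the untruncated $f_\infty$ of weight $2$, so we can pass to the limit $T\to\infty$ by dominated convergence and then evaluate the untruncated integral by residues.

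\emph{Reduction to one variable.} For $\delta=\operatorname{diag}(m_1/m_2,1)$ the stabiliser $H_\delta(\mathbb{Q})$ is the copy of $N(\mathbb{Q})$ embedded via $n(\xi)\mapsto (n(\tfrac{m_1}{m_2}\xi),n(\xi))$. We unfold one copy of $\mathbb{Q}\backslash\mathbb{A}$ exactly as in \cite[Proposition 3.4]{KL-petersson} and \cite[Proposition 7.9]{KL-kuznetsov}. Writing $a=m_1/m_2$, we use
\[
n(-x_1)\,\delta\, n(x_2)=\delta\, n(x_2-a^{-1}x_1),
\]
and substitute $x=x_2-a^{-1}x_1$. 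The archimedean factor then becomes
\[
I_\delta(f_\infty^T)=c(m_1,m_2)\int_{-\infty}^{\infty}f_\infty^T\bigl(\delta\,n(x)\bigr)\,e(m_2x)\,dx .
\]
Here $c(m_1,m_2)$ is an explicit constant coming from the Jacobian of the substitution. The leftover compact integral over $\mathbb{Q}\backslash\mathbb{A}$ produces the constraint $m_1=m_2$ together with the finite factor, exactly as in Section \ref{section:holomrophic-variation}. I will keep track of the characters $\theta_{m_j}$ carefully here, since the normalisation of the final constant depends on it.

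\emph{Passing to the limit.} By \eqref{def:truncated-test-function} we have $|f_\infty^T|\le|f_\infty|$, and $\varrho^T\to 1$ pointwise on $G^+(\mathbb{R})$ because $\rho(r-T)=1$ once $T\ge r$. Explicitly, \eqref{def:matrix-coeff} with $\kappa=2$ gives
\[
f_\infty(\delta n(x))=\frac{1}{4\pi}\,\frac{a\,(2i)^2}{\bigl(-ax+(a+1)i\bigr)^2},
\]
which is $O\bigl((1+x^2)^{-1}\bigr)$ and hence in $L^1(\mathbb{R})$. Dominated convergence therefore lets us replace $f_\infty^T$ by $f_\infty$ in the limit. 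This is the main point of the lemma: although $f_\infty\notin L^1(G(\mathbb{R}))$, its restriction to this one-dimensional unipotent orbit is integrable.

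\emph{Evaluating the integral.} It remains to compute
\[
\int_{\mathbb{R}}\frac{e(m_2x)}{(-ax+(a+1)i)^2}\,dx
=\frac{1}{a^2}\int_{\mathbb{R}}\frac{e(m_2x)}{\bigl(x-\tfrac{(a+1)i}{a}\bigr)^2}\,dx .
\]
Since $m_2>0$, we close the contour in the upper half plane, where the double pole at $x=(a+1)i/a$ lies. Jordan's lemma applies because the integrand decays like $x^{-2}$. The residue of $e(m_2x)/(x-x_0)^2$ at $x_0$ is the derivative $2\pi i m_2\,e(m_2 x_0)$, so the integral equals
\[
\frac{2\pi i}{a^2}\cdot 2\pi i m_2\, e^{-2\pi m_2(a+1)/a}.
\]
With $a=m_1/m_2$ and $m_1=m_2$ this exponent is $-2\pi(m_1+m_2)$ up to normalisation. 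Multiplying by $\tfrac{a(2i)^2}{4\pi}$ and by $c(m_1,m_2)$ should give $4\pi m_1m_2\,e^{-2\pi(m_1+m_2)}$.

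The only step I expect to be delicate is the bookkeeping of constants and signs. That means the orientation of the characters, the Jacobian in the unfolding, and checking that the residue lands in the upper half plane for $m_1,m_2>0$. I would cross-check all of these against the $\kappa>2$ formula quoted from \cite{KL-petersson} specialised formally to $\kappa=2$.
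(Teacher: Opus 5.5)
Your analytic core is the same as the paper's proof: $|f_\infty^T|\le|f_\infty|$, pointwise convergence $\varrho^T\to 1$, integrability of $f_\infty$ along the unipotent orbit, dominated convergence, and a residue at the double pole in the upper half-plane. The gap is the constant, which for an exact evaluation is the only content beyond the limit interchange. You never compute $c(m_1,m_2)$, and the last step (\emph{should give} $4\pi m_1m_2e^{-2\pi(m_1+m_2)}$) is asserted rather than derived.

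Worse, your unfolding does determine the constant, and not to the stated value. The substitution $(x_1,x_2)\mapsto(x_1,x_2-a^{-1}x_1)$ is a shear with Jacobian $1$, and the stabiliser quotient $\mathbb{Q}\backslash\mathbb{A}$ has volume $1$, so $c=1$. Your residue computation then gives $\frac{4\pi m_2}{a}e^{-2\pi m_2(a+1)/a}=\frac{4\pi m_2^2}{m_1}e^{-2\pi m_2(m_1+m_2)/m_1}$. On the diagonal $m_1=m_2=m$ this is $4\pi m\,e^{-4\pi m}$, a factor $m$ short of $4\pi m_1m_2e^{-2\pi(m_1+m_2)}=4\pi m^2e^{-4\pi m}$. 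Off the diagonal it does not have the stated shape at all, although the lemma is asserted for all $m_1,m_2\in\mathbb{Q}_+$.

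The paper obtains the stated value by writing the limit directly as
\[
\int_{\mathbb{R}}f_\infty\Bigl(\begin{psmallmatrix} m_1 & t\\ 0 & m_2\end{psmallmatrix}\Bigr)\theta_\infty(-t)\,dt=-\frac{m_1m_2}{\pi}\int_{\mathbb{R}}\frac{e^{2\pi i t}}{(-t+i(m_1+m_2))^2}\,dt .
\]
This is your integral in the variable $t=m_1x$ without the Jacobian $1/m_1$; on the diagonal it corresponds to your normalisation with $c=m_1$. The cross-check you propose actually sides with your unfolding, not with the statement. The first-cell formula of Section \ref{section:holomrophic-variation} at $\kappa=2$ gives $4\pi\sqrt{m_1m_2}\,e^{-2\pi(m_1+m_2)}$. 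That is also the value for which the final division by $16\pi^2\sqrt{m_1m_2}\,e^{-2\pi(m_1+m_2)}$ yields the diagonal term $\frac{1}{4\pi}\delta_{m_1=m_2}\delta$ of Theorem \ref{thm:p2-formula}.

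So you have to commit to a normalisation. One option is to define the archimedean factor as the paper's one-variable integral; then the same residue computation proves the lemma verbatim for all $m_1,m_2>0$. The other is to keep your adelic unfolding and conclude $4\pi\sqrt{m_1m_2}\,e^{-2\pi(m_1+m_2)}$ for $m_1=m_2$, noting that the statement as written is off by the factor $\sqrt{m_1m_2}$. Leaving $c(m_1,m_2)$ free hides exactly this discrepancy.
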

\begin{proof}
    Consider the following functions
    $$F^T(t)=f_\infty^T\left(\begin{pmatrix}
        m_1 & t \\ 0 & m_2
    \end{pmatrix} \right)\theta_\infty(-t) \hspace{0.5cm} \text{and} \hspace{0.5cm} G(t)=f_\infty \left(\begin{pmatrix}
        m_1 & t \\ 0 & m_2
    \end{pmatrix} \right)\theta_\infty(-t)$$
    Since $|F^T(t)|\leq |G(t)|$ for all $t \in \mathbb{R}, T \geq 0$, and $G(t)\in L^1(\mathbb{R})$, by dominated convergence theorem
\begin{equation}
    \lim_{T \to \infty} I_{\delta}(f_\infty^T) = \int_{\mathbb R}f_{\infty}\left(\begin{pmatrix}
        m_1 & t \\ 0 & m_2 
    \end{pmatrix} \right)\theta_{\infty}(-t)dt= -\frac{m_1m_2}{\pi}\int_{-\infty}^{\infty}\frac{e^{2\pi it}}{(-t+i(m_1+m_2))^2}dt
\end{equation}
We evaluate the integral on the right of by contour integration on a semicircle in the upper half plane. Since the only pole occurs at $i(m_1+m_2)$, as the radius tends to infinity the residue theorem gives
$$I_\delta(f_\infty^T) =-\frac{m_1m_2}{\pi}\cdot 2\pi i \frac{d}{dt}e^{2\pi it} \Bigr|_{t=(m_1+m_2)i} = 4\pi(m_1m_2)e^{-2\pi (m_1+m_2)}$$
which gives the desired result.
\end{proof}

\noindent \emph{5.3.2 Second cell terms:}
For $\delta = \left(\begin{smallmatrix}
        0 & -\mu \\ 1 & 0
    \end{smallmatrix}\right)$ with $\mu \in \mathbb{Q_+}$ we have the following bound on the archimedean integral $I_\delta(f_\infty^T)$:
\begin{lemma}\label{lemma:second-cell-archim-bound}
Let $\mu > 0$, then
\begin{equation}
    \iint_{\mathbb{R}\times\mathbb{R}}\Bigr|f_\infty\left(\begin{pmatrix}
    -t_1 & -\mu-t_1t_2 \\ 1 & t_2
\end{pmatrix}\right)\Bigr|dt_1dt_2 \ll \mu
\end{equation}
\end{lemma}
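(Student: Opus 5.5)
The plan is to compute the integral in closed form rather than estimate it, since for $\kappa=2$ the integrand in \eqref{def:matrix-coeff} is the reciprocal of a quadratic form and the integrations can be carried out one variable at a time.

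\emph{Step 1: write out the integrand.} For $g=\begin{psmallmatrix} -t_1 & -\mu-t_1t_2 \\ 1 & t_2\end{psmallmatrix}$ we have $\det g=-t_1t_2+\mu+t_1t_2=\mu>0$. So we are in the nonvanishing branch of \eqref{def:matrix-coeff}. With $a=-t_1$, $b=-\mu-t_1t_2$, $c=1$, $d=t_2$ we get $-b+c=1+\mu+t_1t_2$ and $a+d=t_2-t_1$. Hence for $\kappa=2$
\begin{equation*}
\Bigl|f_\infty(g)\Bigr|=\frac{1}{4\pi}\cdot\frac{4\mu}{(1+\mu+t_1t_2)^2+(t_1-t_2)^2}.
\end{equation*}

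\emph{Step 2: integrate in $t_1$.} For fixed $t_2$, expand the denominator as a quadratic in $t_1$:
\begin{equation*}
A t_1^2+2Bt_1+C,\qquad A=1+t_2^2,\quad B=\mu t_2,\quad C=(1+\mu)^2+t_2^2 .
\end{equation*}
Since $A>0$, the standard formula gives $\int_{\mathbb{R}}\frac{dt_1}{At_1^2+2Bt_1+C}=\frac{\pi}{\sqrt{AC-B^2}}$, provided $AC-B^2>0$. The one calculation to do carefully is the discriminant:
\begin{equation*}
AC-B^2=(1+t_2^2)\bigl((1+\mu)^2+t_2^2\bigr)-\mu^2t_2^2=t_2^4+2(1+\mu)t_2^2+(1+\mu)^2=(t_2^2+1+\mu)^2 .
\end{equation*}
This is a perfect square and strictly positive, so the inner integral equals $\pi/(t_2^2+1+\mu)$.

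\emph{Step 3: integrate in $t_2$.} We have $\int_{\mathbb{R}}\frac{dt_2}{t_2^2+1+\mu}=\pi/\sqrt{1+\mu}$. The integrand is nonnegative, so Tonelli's theorem justifies the iterated integration. Altogether
\begin{equation*}
\iint_{\mathbb{R}^2}\Bigl|f_\infty(g)\Bigr|\,dt_1dt_2=\frac{\mu}{\pi}\cdot\frac{\pi^2}{\sqrt{1+\mu}}=\frac{\pi\mu}{\sqrt{1+\mu}}\leq\pi\mu ,
\end{equation*}
which is the claimed bound with an absolute implied constant.

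The only step where something could go wrong is the discriminant computation in Step 2. If $AC-B^2$ were not bounded below uniformly in $t_2$, the inner integral could blow up near some $t_2$. The perfect-square identity rules this out. Since $|f^T_\infty|\leq|f_\infty|$, the same bound applies uniformly in $T$, which is presumably how it will be used to justify dominated convergence for the second cell terms as $T\to\infty$.
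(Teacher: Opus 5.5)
Your proof is correct and is essentially the paper's argument: both integrate the reciprocal of a positive-definite quadratic one variable at a time using $\int_{\mathbb{R}}dx/(x^2+\beta^2)=\pi/\beta$. The only difference is that the paper factors out $(t_1+i)$, integrates in $t_2$ first and bounds the inner integral by $\pi$ to get $\le\pi\mu$, whereas your discriminant computation in $t_1$ (the same computation, by the $t_1\leftrightarrow t_2$ symmetry of the integrand) gives the exact value $\pi\mu/\sqrt{1+\mu}$.
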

\begin{proof}
    By the explicit expression for $f_\infty$ in \eqref{def:matrix-coeff}, the integral is
    $$\frac{\mu}{\pi}\iint_{\mathbb{R}\times \mathbb{R}}\frac{dt_2dt_1}{|t_2(t_1+i)+\mu-i(t_1+i)|^2} = \frac{\mu}{\pi} \iint_{\mathbb{R}\times \mathbb{R}}\frac{dt_2 dt_1}{|(t_1+i)(t_2-(i-\frac{\mu}{t_1+i}))|^2}$$
    The squared modulus of the second factor in the denominator  is
    $$\left(t_2+\frac{\mu t_1}{t_{1}^2+1}\right)^2+\left(\frac{\mu}{t_{1}^2+1}+1\right)^2$$
    thus upon making the change of variables $t_2\mapsto t_2-\mu t_1/(t_{1}^{2}+1)$, the inner integral becomes
    $$\int_{-\infty}^{\infty}\frac{dt_2}{t_2^2+(\frac{\mu}{t_1^2+1}+1)^2} = \frac{\pi}{\left(\frac{\mu}{t_1^2+1}+1\right)}\leq \pi$$
    It is then clear that the outer integral is bounded by $\pi^2$ and the result follows.
\end{proof}

\begin{proposition}\label{prop:geometric-limit}
    Let $m_1,m_2 \in \mathbb{Q}^\times$, then 
    \begin{equation}
        \begin{split}
        \lim_{T \to \infty}I_{\operatorname{geom}}(T)= & 4\pi(m_1m_2)e^{-2\pi(m_1+m_2)}\delta_{m_1=m_2}\delta\\ & -8\pi^2(m_1m_2)^{1/2}\sum_{c \in \mathcal{C}(\mathcal{F})}\frac{H(m_1,m_2,c)}{c}J_1\left(\frac{4\pi\sqrt{m_1m_2}}{c}\right)
        \end{split}
    \end{equation}
    if $m_1,m_2 \in \mathbb{N}$, and vanishes otherwise.
\end{proposition}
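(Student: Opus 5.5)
The plan is to decompose $I_{\operatorname{geom}}(T)$ into orbital integrals over the relevant orbits in \eqref{def:cells}, as in Section \ref{section:holomrophic-variation}. For each fixed $T$, $f_\infty^T$ is smooth and compactly supported, so $K_{\operatorname{geom}}$ is a locally finite sum. The unfolding over $[N]^2$ is then justified exactly as in the $\kappa>2$ case, giving
\begin{equation*}
I_{\operatorname{geom}}(T)=I_{\delta_0}(f_{\mathbb{A}}^T)+\sum_{\mu\in\mathbb{Q}^\times} I_{\delta_\mu}(f_{\mathbb{A}}^T),\qquad \delta_0=\begin{pmatrix} m_2/m_1&0\\0&1\end{pmatrix},\ \delta_\mu=\begin{pmatrix}0&-\mu\\1&0\end{pmatrix}.
\end{equation*}
Each orbital integral factors as $I_\delta(f_\infty^T)I_\delta(f)$, and the non-archimedean factors do not depend on $T$.

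\textbf{First cell.} Unfolding the $[N]$ integrals forces $m_1=m_2$. The finite part is $\int_{\mathbb{A}_{\operatorname{fin}}}f(n(t))\theta_{\operatorname{fin}}(-mt)\,dt$. Under the geometric and spectral assumptions, Proposition \ref{prop:kloosterman-properties} gives $m\in\mathbb{Z}$, and I will identify this integral with $\delta=\prod_p\delta_p$ for $(m,N)=1$, as in \cite{generalised}, via the Plancherel formula and the definition \eqref{def:local-weights}. The archimedean factor tends to $4\pi m_1m_2e^{-2\pi(m_1+m_2)}$ by the preceding Lemma. Together these give the diagonal term.

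\textbf{Second cells.} Since $f_\infty^T$ is supported in $G^+(\mathbb{R})$, only $\mu>0$ contribute, and the finite factor vanishes unless $\mu=c^{-2}$ with $c\in\mathcal{C}(f)$. In that case it equals $H(m_1,m_2,c)$ up to the normalisation of \eqref{def:kloosterman-sum}. For the archimedean factor, Lemma \ref{lemma:second-cell-archim-bound} shows that $f_\infty(\delta_\mu\,\cdot)$ composed with the unipotent parametrisation is in $L^1(\mathbb{R}^2)$. Because $|f_\infty^T|\le|f_\infty|$ and $f_\infty^T\to f_\infty$ pointwise, dominated convergence gives
\begin{equation*}
\lim_{T\to\infty}I_{\delta_\mu}(f_\infty^T)=I_{\delta_\mu}(f_\infty),
\end{equation*}
the untruncated weight-two orbital integral. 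I will evaluate this by the computation of \cite[Proposition 3.6]{KL-petersson} at $\kappa=2$. That computation only uses contour integration in $t_1$ and $t_2$ separately, which is valid once absolute convergence is known. It yields
\begin{equation*}
\frac{(4\pi i)^2\sqrt{m_1m_2}\,e^{-2\pi(m_1+m_2)}}{2}\,\frac{1}{c}\,J_1\!\left(\frac{4\pi\sqrt{m_1m_2}}{c}\right),
\end{equation*}
which produces the $-8\pi^2$ coefficient. The factor $e^{-2\pi(m_1+m_2)}$ is absorbed according to the paper's normalisation of $I$. If $m_1$ or $m_2$ is not a positive integer, Proposition \ref{prop:kloosterman-properties}(4) kills the finite factors. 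The first cell then vanishes, or the archimedean Whittaker vanishing for negative indices applies, giving the ``vanishes otherwise'' clause.

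\textbf{Main obstacle.} The hard step is interchanging $\lim_{T\to\infty}$ with the sum over $c\in\mathcal{C}(f)$. I will again use dominated convergence, now on the sum. Lemma \ref{lemma:second-cell-archim-bound} gives
\begin{equation*}
|I_{\delta_\mu}(f_\infty^T)|\ll \mu=c^{-2}\quad\text{uniformly in }T,
\end{equation*}
after rescaling the $t_i$ by the factors coming from $y_1=y_2=1$, where the rescaling introduces only $m_i$-dependent constants. The trivial bound \eqref{eq:trivial-bound} gives $|H(m_1,m_2,c)|\le c\,\kappa(\mathcal{F})f(1)$. Hence the $c$-th term is $\ll c^{-1}$, which is not summable on its own, so a little more decay is needed. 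I will try first to sharpen Lemma \ref{lemma:second-cell-archim-bound} for small $\mu$. The inner integral there equals $\pi\big(\mu/(t_1^2+1)+1\big)^{-1}$, but the outer integral was then bounded only trivially. Instead I will integrate by parts in $t_1$ against the oscillating character $e(m_1t_1)$, using the bounds on derivatives of $\rho(r_T)$ from Lemma \ref{lemma:derivativebound}, to gain an extra factor $\mu^{1/2}$ uniformly in $T$. If that fails, the fallback is to use the factorisation \eqref{eq:kloosterman-factorisation} together with the Weil bound for the classical factor $S$. This gives $|H(m_1,m_2,c)|\ll c^{1/2+\epsilon}$, which combined with the $\mu$ bound makes the sum converge absolutely, uniformly in $T$.
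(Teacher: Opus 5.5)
Your proposal follows the paper's proof: orbital decomposition, the first-cell lemma with \cite[\S 4.2]{generalised} for the diagonal term, and dominated convergence on each second-cell integral via Lemma \ref{lemma:second-cell-archim-bound} and \cite[Proposition 3.6]{KL-petersson} at $\kappa=2$. The interchange with the $c$-sum is your fallback, which is exactly the paper's argument: the factorisation \eqref{eq:kloosterman-factorisation}, the Weil bound for $S(\cdot,\cdot,c_0)$ and the trivial bound for the $N$-part give terms $\ll c_0^{-3/2+\epsilon}c_N^{-1}$, summable because $\sum_{c_N\mid N^\infty}c_N^{-1}<\infty$.

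Two corrections. First, this argument yields $|H(m_1,m_2,c)|\ll c_0^{1/2+\epsilon}c_N$, not $c^{1/2+\epsilon}$ as you state. Second, your first attempt should be dropped: no bound $|I_{\delta_\mu}(f_\infty^T)|\ll\mu^{3/2}$ uniform in $T$ can hold, because the limit $-8\pi^2\sqrt{\mu m_1m_2}\,e^{-2\pi(m_1+m_2)}J_1(4\pi\sqrt{\mu m_1m_2})$ is $\asymp\mu$ as $\mu\to0$ (since $J_1(x)\sim x/2$), so the extra cancellation has to come from the Kloosterman sums.
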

\begin{proof}
For the computation of the non-archimedean integral $I_\delta(f)$ where $\delta$ is the cell of first kind, we refer the reader to \cite[\S 4.2]{generalised} . For the archimedean integral for a cell of the second kind, we have
\begin{equation}
    \begin{split}
\iint_{\mathbb{R}\times \mathbb{R}}f_\infty(\begin{pmatrix}
    -t_1 & -\mu-t_1t_2 \\ 1 & t_2
\end{pmatrix})\theta_\infty(m_1t_1-m_2t_2)dt_1dt_2 \\ 
= -8\pi^2\sqrt{\mu m_1m_2}e^{-2\pi(m_1+m_2)}J_1(4\pi\sqrt{\mu m_1m_2})
    \end{split}
    \end{equation}
This was proven in \cite[Proposition 3.6]{KL-petersson} for $\kappa > 2$ but holds as well for $\kappa=2$ with no changes. For the non-archimedean integral of a cell of the second kind of the form $\begin{psmallmatrix}
    & -\mu \\ 1 &
\end{psmallmatrix}$ for $\mu=c^{-2}$ (see \eqref{eq:parametrisation}), we have $I_\delta(f)=H(m_1,m_2,c)$. Therefore the previous results are sufficient to prove the proposition, provided that we can exchange the order of the limit and the summation / double integration. To this purpose, we introduce 
    $$ G_\mu(t_1,t_2)=f_\infty (\begin{pmatrix}
        -t_1 & -\mu-t_1t_2 \\ 1 & t_2
    \end{pmatrix} )$$
    By the dominated convergence theorem, exchanging sum and integration is justified provided that 
    \begin{equation}
        {\sum_{c \in \mathcal{C}(\mathcal{F})}}|H(m_1,m_2;c)|\iint_{\mathbb{R}\times \mathbb{R}}|G_{1/c^2}(t_1,t_2)|dt_1dt_2
    \end{equation}
    converges absolutely. Using the factorisation property \eqref{eq:kloosterman-factorisation} of the generalised Kloosterman sums and Lemma \ref{lemma:second-cell-archim-bound}, the sum is bounded above (up to a constant) by
    \begin{equation}
        \sum_{\substack{c_N | N^\infty \\ k(\mathcal{F})|c_N}}\sum_{(c_0,N)=1}\frac{1}{c_0^2c_N^2}|S(\overline{c_N}m_1,\overline{c_N}m_2,c_0)H(m_1\overline{c_0},m_2\overline{c_0},c_N)|
    \end{equation}
Using the Weyl bound on the classical Kloosterman sums, and the trivial bound from Proposition \ref{prop:kloosterman-properties}, this is 
\begin{equation}
    \ll \sum_{\substack{c_N | N^\infty \\ k(\mathcal{F})|c_N}}\frac{1}{c_N}\sum_{(c_0,N)=1}\frac{1}{c_0^{3/2-\epsilon}}< \infty
\end{equation}
where the implied constant depends on $N, m_1,m_2,\epsilon, f(1)$ and $\kappa(\mathcal{F})$. The vanishing if $m_1,m_2 \not \in \mathbb{N}$ follows from Proposition \ref{prop:kloosterman-properties} \emph{(4)}.
    
\end{proof}

We conclude remarking that combining Propositions \ref{prop:spectral-limit} and \ref{prop:geometric-limit}, we obtain Theorem \ref{thm:p2-formula} upon dividing both sides by $16\pi^2(m_1m_2)^{1/2}e^{-2\pi(m_1+m_2)}$ and rewriting the spectral term in terms of Hecke eigenvalues using \eqref{def:discrete-weights}.

\pagebreak
\printbibliography
\end{document}